\newtheorem{thm}{Theorem}%[section]
\newtheorem{corollary}[thm]{Corollary} 
\newtheorem{lem}[thm]{Lemma}
 \newtheorem{lemma}[thm]{Lemma}
\theoremstyle{definition}
\newtheorem{defn}[thm]{Definition}
\newtheorem{rem}[thm]{Remark}
\def \no#1#2#3 {{\bf #1} (#3), #2.}
\def \eds#1#2#3 {#1, #2, #3.}
\def\R{{\mathbb R}}
\def\d{{\rm d}}
\def\N{{\mathbb N}}
\def\:{{\colon}}
\def\be#1{\begin{equation}\label{#1}}
\def\ee{\end{equation}}
\def\<{\langle}
\def\>{\rangle}
\def\coloneqq{:=}
\newcommand{\na}{\nabla}
\newcommand{\oo}{\overline{\omega}}
\newcommand{\ov}{\overline{v}}
\newcommand{\tv}{\widetilde{v}}
\newcommand{\lec}{\lesssim}
\newcommand{\gec}{\gtrsim}
\newcommand{\bs}{\begin{split}}
\newcommand{\essss}{\end{split}}
\renewcommand{\lec}{\lesssim}
\newcommand{\eqnb}{\begin{equation}}
\newcommand{\eqne}{\end{equation}}
\renewcommand{\ee}{\mathrm{e}}
\newcommand{\p}{\partial}
\renewcommand{\tt}{{\widetilde{\theta}}}
\newcommand{\tp}{{\psi}}
\newcommand{\tb}{{\phi}}
\newcommand{\otp}{{\overline{\psi }}}
\newcommand{\otb}{{\overline{\phi}}}
\newcommand{\ttp}{{\Psi }}
\newcommand{\ttb}{{\Phi}}
\newcommand{\lt}{{\widetilde{\lambda}}}
\newcommand{\nt}{{\widetilde{N}}}
\renewcommand{\R}{\mathbb{R}}
\newcommand{\Z}{\mathbb{Z}}
\renewcommand{\d}{\mathrm{d}}
\newcommand{\supp}{\operatorname{supp}}
\newcommand\blfootnote[1]{%
  \begingroup
  \renewcommand\thefootnote{}\footnote{#1}%
  \addtocounter{footnote}{-1}%
  \endgroup
}
\begin{document}
\title[Continuous loss of regularity for SQG]{Instantaneous continuous loss of regularity for the SQG equation}
\author{Diego C\'ordoba,  Luis Mart\'inez-Zoroa,  Wojciech S. O\.za\'nski} 
%\author{}
%\date{\vspace{-5ex}}
\maketitle
\blfootnote{\noindent D.~C\'ordoba: Instituto de Ciencias Matem\'aticas, Madrid, Spain, email: dcg@icmat.es\\
L.~Mart\'inez-Zoroa: Department of Mathematics, CUNEF university, Spain, email: luis.martinezzoroa@cunef.edu\\
W.~S.~O\.za\'nski: Department of Mathematics, Florida State University, Tallahassee, FL 32306, USA, and Department of Mathematics, Princeton University, Princeton, NJ, 08544, USA, email: wozanski@fsu.edu}

\begin{abstract}
 Given $s\in (3/2,2)$ and $\varepsilon >0$, we construct a compactly supported initial data $\theta_0$ such that $\| \theta_0 \|_{H^s}\leq \varepsilon$ and there exist $T>0$, $c>0$ and a local-in-time solution $\theta$ of the SQG equation that is compactly supported in space, continuous and differentiable   in $t$ and in $x$  on $\R^2\times [0,T]$, and, for each $t\in [0,T]$, $ \theta (\cdot ,t ) \in {H^{s/(1+ct)}}$ and $ \theta (\cdot ,t ) \not \in {H^\beta }$ for any $\beta > s/(1+ct)$. Moreover, $\theta$ is unique among all solutions with initial condition $\theta_0$ which belong to $C([0,T];H^{1+\alpha })$ for any $\alpha >0$  and is continuous and differentiable in $t$ and in $x$  on $\R^2\times [0,T]$.
\end{abstract}
\vspace{0.5cm}
{\small
Keywords: surface quasi-geostrophic equation, SQG, ill-posedness, instantaneous loss of regularity, pseudosolution, continuous loss of regularity.}
%MSC codes: 
%35Q35  	PDEs in connection with fluid mechanics
\vspace{0.5cm}
\section{Introduction}
\bibliographystyle{alpha}

We are concerned with classical solutions of  the surface quasi-geostrophic equation (SQG), \eqnb\label{sqg}
\p_t \theta + v[\theta ] \cdot \nabla \theta =0
\eqne
in $\R^2\times (0,T)$, where $T>0$ and the velocity field is given by the Biot-Savart law,
\eqnb\label{bs}
v[\theta ] (x,t) \coloneqq \mathcal{R}^T \theta (x,t) = \frac{\Gamma (3/2)}{\pi^{3/2}} \mathrm{p.v.} \int \frac{(x-y)^\perp \theta (y,t)}{|x-y|^3} \d y.
\eqne

The SQG equation arises in geophysical fluid dynamics context \cite{HPGS,pedlosky}, and, from the analysis perspective, it   shares a number of  remarkable similarities with the 3D incompressible Euler equations, as observed by  Constantin, Majda and Tabak \cite{CMT}, who also established local well-posedness in $H^s$ for $s>2$ (see also \cite{CN} for bounded domains). Moreover, Wu \cite{Wu} established local well-posedness in $C^{k,\alpha}$, for $k\geq 1$ and $\alpha \in (0,1)$. As for the critical Sobolev space $H^2$ Chae and Wu \cite{chae_wu} proved local
existence for a logarithmic inviscid regularization of SQG (see also \cite{JKM}). We note that the question of finite-time  singularity formation from smooth initial data with finite energy remains a major open problem for both the SQG equation \eqref{sqg}, as well as the 3D incompressible Euler equations.

Due to incompressibility of $v[\theta ]$ and the transport structure of \eqref{sqg} the $L^p$ norms, where $p\in [1,\infty ]$, of $\theta$ and $\| v [\theta ]\|_{L^2}$ are conserved by the evolution of \eqref{sqg} for sufficiently regular solutions.  Global existence of weak solutions in $L^2$ was proved by Resnick in \cite{resnick} (see also \cite{CN} in the case of bounded domains) and extended by
Marchand in \cite{marchand} to the class of initial data in $L^p$ with $p > 4/3$. In recent years, numerous results have been published concerning the non-uniqueness of weak solutions and the conservation of Hamiltonian systems for weak solutions for the SQG equation. For further details on these developments, we direct the reader to \cite{BSV}, \cite{BHP}, \cite{ChKL}, \cite{DP1}, \cite{DP2},  \cite{IM1}, \cite{IM2} and \cite{DGR} and the references therein.

%On the other hand non-uniqueness of weak
%solutions was obtained by Buckmaster, Shkoller and Vicol in \cite{BSV} for solutions in $\Lambda^{-1} \theta \in C^\sigma_t C^\beta_x$ with $\frac12 <\beta <\frac45$ and $\sigma < \frac{\beta}{2-\beta}$. 

As for ill-posedness of classical solutions of the SQG equation \eqref{sqg}, it was shown in \cite{CM} that  the equation is ill-posed in $C^k$ in the sense that given any $T>0$  one can construct solutions in $\R^2 \times [0, T)$ of \eqref{sqg} that initially are in $C^k \cap L^2$ ($k\geq 2$), but are not in $C^k$ for $t > 0$. Similar construction in $H^s$, $s\in (3/2,2]$, was also obtained in \cite{CM} (see also \cite{JK}, which shows ill-posedness in the critical space $H^2$). Additionally, instant blow-up constructions for the generalized SQG equation in Hölder spaces $C^{k,\alpha}$ 
  were obtained in \cite{CM1}, and for the SQG equation with fractional dissipation in \cite{CM2}.

We also refer the reader to the work of Elgindi and Masmoudi \cite{EM} for mild ill-posedness for perturbations of a stationary solution of SQG.   We also note results regarding norm growth in the periodic setting for SQG, such the result of Kiselev and Nazarov \cite{KN} on the existence of initial conditions with arbitrarily small norm in $H^s$ ($s \geq  11$) which give rise to a local solutions that become large after a long period of time. Friedlander and Shvydkoy \cite{FS} proved the presence of unstable eigenvalues in the spectrum. Recently, He and Kiselev \cite{HK} proved an exponential in time growth for the $C^2$ norm of the form $\sup_{t\leq T} |\nabla^2 \theta |_{L^\infty} \geq \exp (\gamma T)$, where $\gamma = \gamma (\theta_0) > 0$. Moreover, there are several rigorous constructions of non-trivial global solutions in $H^s$ ($s>2$) found in \cite{CCGS}, \cite{GS} and  \cite{ADMW}.

We note that some numerical simulations suggested the existence of solutions with very fast growth of $|\nabla \theta |$ starting with a smooth profile by a collapsing hyperbolic saddle scenario \cite{CMT}, \cite{OY} and \cite{CLSTW}). However, such a scenario cannot develop a singularity as was proved by C\'ordoba \cite{cordoba}, see also \cite{CF},
where a double exponential bound on $|\nabla \theta |$ is obtained. Another result, due to Scott \cite{scott}, discusses a blow-up scenario in which the fast growth of $|\nabla \theta |$ is associated to a cascade of filament instabilities.

In this work we are concerned with a stronger notion of ill-posedness than strong ill-posedness or nonexistence. To be precise, given $s\in (3/2,2)$ and $\varepsilon>0$ we construct initial data $\theta_0\in H^s$ with $\| \theta_0 \|_{H^s} \leq \varepsilon$ such that the unique local-in-time solution exists for $t\in [0,T]$, for some $T>0$, and loses regularity not only instantaneously at $t=0$, but continues to lose it continuously in $t$. Namely, there exists $c>0$ such that the solution $\theta (x,t)$ is such that $ \theta (\cdot ,t ) \in {H^{s/(1+ct)}}$ and $ \theta (\cdot ,t ) \not \in {H^\beta }$ for any $\beta > s/(1+ct)$.

\subsection{Main results}

We first present a norm inflation result of a smooth solution to the SQG that is localized in a small ring around the origin.

\begin{thm}[Norm inflation]\label{T00}
   Given $s\in (\frac{3}{2},2)$, there exist constants $T_0,c_0, c_{1},c_{2},c_{3},c_{4},c_{5},c_{6}>0$, such that, for any $P\in \N$, $K\geq 1$, and any sufficiently large $\lambda>1$, there exists a $P$-fold symmetric, odd-odd symmetric $\theta_0\in C_c^\infty (\R^2)$ with 
    \[
   \begin{split}
   \|\theta_0 \|_{H^{s}}&\leq c_0 P^{\frac{1}{2}}K^{-1},
   \end{split}
   \]  
 and  such that the unique solution $\theta $ to the SQG equation \eqref{sqg} with initial data $\theta_0$ exists and remains smooth until $T_0$, and
   \eqnb\label{black_box_claims}
   \begin{split}
       \|\theta\|_{H^4}&\leq C_{K,P} \lambda^{c_{5}},\\
       \|\theta\|_{L^{\infty}}&\leq \lambda^{-1},\supp\,\theta (\cdot , t) \subset B_{2\lambda^{-\frac{1}{2}}}\setminus B_{\frac{1}{2}\lambda^{-\frac{1}{2}}},\\
       \|v[ \theta]\|_{C^1},\|\theta\|_{C^1}&\leq c_{6}\log \lambda 
       \end{split}
       \eqne
       for  $t\in[0,T_0]$. Moreover, $\theta= \tb + \tp$, where $\tb,\tp$ are such that
\eqnb\label{black_box_Hb}
\| \tb \|_{H^{\beta} } \sim K^{-1} P^{\frac{1}{2}}\left( \lambda (\log \lambda)^{-1} K^{-1} \right)^{\frac{\beta - s}{s-1}}\quad \text{ and } \quad  \|\tp\|_{H^{\beta}} \sim P^{\frac{1}{2}} c_1 K^{-1-\frac{\beta t}{s-1}} \lambda^{c_{2}(\beta-s)+c_{3}\beta t } (\log \lambda)^{-c_4\beta t}
\eqne
for all $\beta\in[0,s]$, $t\in [0,T_0]$.
\end{thm}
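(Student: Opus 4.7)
\medskip
\noindent\textbf{Proof proposal.} The plan is to set up a two-scale ansatz $\theta_0 = \tb_0 + \tp_0$, where $\tb_0$ is a slowly-evolving background whose self-induced velocity $v[\tb]$ contains a nontrivial hyperbolic/rotational component near the annulus $\{|x|\sim\lambda^{-1/2}\}$, and $\tp_0$ is a small high-frequency perturbation supported in the same annulus that will undergo frequency inflation by being sheared along the flow of $v[\tb]$. The parameters $P,K,\lambda$ will be engineered so that $\tb$ is at characteristic frequency $N_{\tb}\sim(\lambda/\log\lambda\cdot K^{-1})^{1/(s-1)}$ and $\tp$ is at the much higher characteristic frequency $N_{\tp}\sim\lambda^{c_{2}}$, with the expected growth $N_{\tp}(t)\sim N_{\tp}(0)\cdot\lambda^{c_{3}t}$ dictated by the shear rate of $v[\tb]$ on the support. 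The $P$-fold and odd-odd symmetries are imposed to kill low-order harmonics of $v[\tb]$ so that the leading-order velocity seen by $\tp$ is essentially a pure shear/hyperbolic flow with a computable gradient of size $\sim\log\lambda$, matching the $C^{1}$ bound claimed in \eqref{black_box_claims}.

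\smallskip
\noindent First I would precisely construct $\tb_0$ and $\tp_0$ from a fixed odd-odd profile of appropriate $H^{s}$-size, carefully normalized so that $\|\theta_0\|_{H^{s}}=P^{1/2}K^{-1}$ and the support condition in \eqref{black_box_claims} holds at $t=0$. Next I would derive the coupled system
\[
\p_{t}\tb+v[\tb]\cdot\na\tb=-v[\tp]\cdot\na\tb,\qquad \p_{t}\tp+v[\tb]\cdot\na\tp=-v[\tp]\cdot\na\tp,
\]
and treat the right-hand sides perturbatively. For $\tb$, the key estimate is stability of its $H^{\beta}$ profile on $[0,T_{0}]$: a Gr\"onwall/commutator argument using $\|v[\tb]\|_{C^{1}}\lesssim\log\lambda$ and the smallness of $\|\tp\|_{L^{\infty}}\lesssim\lambda^{-1}$ should show that $\|\tb(\cdot,t)\|_{H^{\beta}}$ remains within a fixed multiplicative factor of its initial value for all $\beta\in[0,s]$, which gives the first half of \eqref{black_box_Hb}.

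\smallskip
\noindent The heart of the argument is the evolution of $\tp$. I would introduce flow coordinates adapted to $v[\tb]$ and track the frequency of $\tp$ via a weighted Sobolev estimate on $\nabla\tp$ composed with this flow. Since $\nabla v[\tb]$ has, in the tangential/radial frame of the annulus and after harmonic selection by the $P$-fold symmetry, a component behaving like a constant shear of strength $\gamma\sim\log\lambda$, the inverse flow map stretches wavenumbers exponentially; the point is to convert this into the polynomial-in-$\lambda$ rate $\lambda^{c_{3}\beta t}(\log\lambda)^{-c_{3}'\beta t}$ by choosing $T_{0}$ compatible with the stretching rate and absorbing logarithmic corrections into $c_{3}'$. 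A Littlewood--Paley decomposition of $\tp$ and commutator estimates of the form $\|[\Delta_{j},v[\tb]\cdot\na]\tp\|_{L^{2}}\lesssim\|v[\tb]\|_{C^{1}}\|\tp\|_{L^{2}}$ give the upper bound, while matching lower bounds require tracking the image of the initially localized Fourier support under the linearized-flow pullback. The self-interaction $v[\tp]\cdot\na\tp$ is controlled using $\|v[\tp]\|_{L^{\infty}}\lesssim\|\tp\|_{L^{\infty}}\lesssim\lambda^{-1}$ together with the $H^{4}$ bound, which should place it in a lower order than the drift by $v[\tb]$ for a sufficiently short time $T_{0}$.

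\smallskip
\noindent Finally I would close a bootstrap/continuity argument on the interval $[0,T_{0}]$ simultaneously for (i) the support condition, (ii) the $L^{\infty}$, $C^{1}$ and $H^{1}$ bounds, (iii) the crude $H^{4}$ bound (needed to persist smoothness and to apply standard local well-posedness in, e.g., $H^{s}$ with $s>2$ from \cite{CMT}), and (iv) the two-sided $H^{\beta}$ asymptotics \eqref{black_box_Hb}. Upper bounds use standard energy estimates and the velocity $C^{1}$ control; lower bounds use that during the short time window the evolved Fourier support of $\tp$ has not collapsed, combined with an almost-orthogonality argument coming from the $P$-fold symmetry. The main obstacle I anticipate is the simultaneous management of competing scales: keeping the error $v[\tp]\cdot\na\tp$ genuinely subleading in every $H^{\beta}$ norm uniformly in $\beta\in[0,s]$ while the characteristic frequency of $\tp$ grows to $\lambda^{c_{2}+c_{3}T_{0}}$ is delicate, and choosing $T_{0}$ small enough (independent of $P$ and $K$) so that the linear shear picture dominates the nonlinear back-reaction is the key quantitative step.
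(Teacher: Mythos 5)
Your high-level plan (two-scale ansatz, background of $H^s$-size $K^{-1}$ oscillating at frequency $\sim\lambda$, high-frequency perturbation sheared by $v[\tb]$ at rate $\log\lambda$, bootstrap in $C^1/H^\beta$, $P$-fold/odd-odd symmetry to enforce structure of the local velocity) matches the paper's strategy. But there is a central ingredient missing, and one step of your argument is actually false as stated.

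The fatal gap is in the background stability step. You write that a Gr\"onwall/commutator argument using $\|v[\tb]\|_{C^1}\lec\log\lambda$ and $\|\tp\|_{L^\infty}\lec\lambda^{-1}$ ``should show that $\|\tb(\cdot,t)\|_{H^\beta}$ remains within a fixed multiplicative factor of its initial value.'' That inference does not hold: with only $\|v[\tb]\|_{C^1}\sim\log\lambda$ as input, Gr\"onwall yields $\|\tb(t)\|_{H^\beta}\leq\|\tb_0\|_{H^\beta}\,\ee^{C\log\lambda\cdot t}=\|\tb_0\|_{H^\beta}\lambda^{Ct}$, which for any $T_0$ of size $O(1)$ is a $\lambda^{\Theta(1)}$ loss, not a fixed factor. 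The paper avoids this by choosing the background so that its \emph{nonlinear self-transport is anomalously small}: with $\oo(x)=K^{-1}\lambda^{1-s}N^{-s}g(\lambda x)\sin(\lambda Nx_1)\sin(\lambda Nx_2)$ (and $\lambda^{2-s}N^{1-s}=K\log N$), the pure-oscillation part $\sin(\lambda Nx_1)\sin(\lambda Nx_2)$ is a \emph{stationary} solution of SQG, so $v[\oo]\cdot\nabla\oo\approx 0$ up to the slowly varying cutoff $g$. Concretely, the paper exhibits a pseudovelocity $\ov[\oo]$ (built from the exact Riesz-transform of the sine product) for which $\ov[\oo]\cdot\nabla\oo$ has a closed form of tiny size (equations \eqref{G_bound}--\eqref{cancel_Hbeta}), and Lemma~\ref{L01} bounds $v[\oo]-\ov[\oo]$ by integration by parts. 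It is this exact cancellation --- not a generic Gr\"onwall --- that keeps $\tb$ within $o(1)$ (in $C^{1,\alpha}$) of the fixed, time-independent profile $\otb$ (Lemma~\ref{L_bg}), which is what the first half of \eqref{black_box_Hb} actually asserts. Without specifying this near-stationary structure of $\tb_0$, your proof of the first asymptotic in \eqref{black_box_Hb} has no chance of closing.

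A second, related gap concerns the two-sided $\sim$ for $\|\tp\|_{H^\beta}$. Littlewood--Paley commutator bounds give \emph{upper} bounds, but for the matching \emph{lower} bounds you gesture at ``tracking the image of the initially localized Fourier support'' and an ``almost-orthogonality argument coming from the $P$-fold symmetry''; this is too vague to close, and the $P$-fold symmetry is not actually what gives the lower bound. The paper instead builds an explicit pseudosolution $\otp(x,t)=\sum_m\omega_p\circ R_{-m}(\ee^{-A(t)}x-b(t))$ where $A(t)=\int_0^t\nabla v[\tb](\eta(z,s),s)\,\d s$ is the exact deformation matrix along the trajectory of the hyperbolic point, shows that $A(t)\approx\mathrm{diag}(-\log N,\log N)t$ (Lemma~\ref{L_A}; note that the diagonality uses the odd-odd symmetry in an essential way, which your sketch does not address), and then proves $\ttp=\tp-\otp$ is small in $C^{1,\gamma}$ and $H^\beta$ (Lemma~\ref{L_pert}). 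The two-sided asymptotics in \eqref{black_box_Hb} come from computing $\|\otp\|_{H^\beta}$ explicitly from this affine pullback and verifying that $\|\ttp\|_{H^\beta}$ is subleading. Moreover, the perturbation $\omega_p$ itself is also chosen with a near-stationary structure (oscillating only in $x_1$), which controls the self-interaction $v[\tp]\cdot\nabla\tp$ without having to rely solely on the crude $\|\tp\|_{L^\infty}\lec\lambda^{-1}$ and $\|\tp\|_{H^4}$ bounds as you propose. So while the broad outline you sketch is recognizable, two load-bearing constructions --- the stationary-type background profile and the affine pseudosolution tracking the hyperbolic deformation --- are absent, and the background stability step as written is incorrect.
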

Here and below we use function spaces (and the Biot-Savart operator \eqref{bs}) in the spatial variables, and we have omitted ``$t$'' in the notation of the left hand sides of \eqref{black_box_claims}--\eqref{black_box_Hb}. 

In our main result, we will combine an infinite sum of the initial conditions given by Theorem~\ref{T00}, concentrated around the origin, and we will use this infinite sum to show loss of regularity. We  now introduce  a notion of solution to the SQG equation \eqref{sqg} that is appropriate for the expected (low) regularity of the resulting solution.

\begin{defn}[Classical solution]\label{definition_classical_sol}
We say that a function $\theta\colon \R^2 \times [0,T] \to \R$ is a \emph{classical solution} to the SQG equation \eqref{sqg} if $\theta$ is continuous and differentiable in both $x$ and $t$ at every point $(x,t)\in \R^2\times [0,T]$, and $\theta \in L^{\infty}([0,T];H^{1+\alpha})$ for some $\alpha>0$. 
\end{defn}
Note that, with this definition, the solution needs to be differentiable but not necessarily continuously differentiable. In particular, the derivatives of the solution we construct  in our main theorem stated below  will exist for any $(x,t)$, but they will not be bounded or continuous.

\begin{thm}[Main result]\label{T01}
    Given $s\in (\frac{3}{2},2)$, $\varepsilon>0$, there exists $T>0$, $P\in \N$ and an odd-odd  symmetric, $P$-fold symmetric  classical solution $\theta$ to the SQG equation \eqref{sqg} on $\R^2\times (0,T)$ such that $\supp\, \theta(\cdot ,t)\subset B_\varepsilon$ for all $t\in [0,T]$,
    \[ \|\theta(\cdot ,0)\|_{H^{s}}\leq \varepsilon,\]
    and there exists some constant $\bar{c}>0$ such that, for $t\in[0,T]$,
    \[
    \theta (\cdot ,t) \in H^{ \frac{s}{1+ \bar{c}t}}\quad \text{ and } \quad \theta (\cdot ,t) \not \in H^{s'} \text{ for all } s'>\frac{s}{1+ \bar{c}t}.
    \]
    Furthermore, $\theta(x,t)$ is the only classical solution with the given initial conditions fulfilling 
    $$\sup_{t\in [0,T ]}\|\theta(\cdot ,t)\|_{H^{1+\delta}}<\infty$$
    for some $\delta>0$.
\end{thm}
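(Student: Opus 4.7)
The plan is to produce $\theta$ as an infinite superposition of norm-inflating solutions from Theorem~\ref{T00}, each concentrated in a disjoint annulus shrinking toward the origin. Fix a single $P\in \N$ common to the whole construction, so the final solution inherits $2\pi/P$-periodicity in angle. Choose sequences $\lambda_n\nearrow\infty$ (very rapidly) and $K_n\to\infty$ such that $P^{1/2}K_n^{-1}\le \varepsilon\,2^{-n}$ and the annuli $B_{2\lambda_n^{-1/2}}\setminus B_{\lambda_n^{-1/2}/2}$ are pairwise disjoint, well separated in scale, and all contained in $B_\varepsilon$. Applying Theorem~\ref{T00} with parameters $(P,K_n,\lambda_n)$ yields smooth compactly supported initial data $\theta_0^{(n)}$; the $\ell^2$-summability of their $H^s$ norms, combined with the near-orthogonality coming from disjoint supports, gives $\theta_0\coloneqq \sum_n \theta_0^{(n)}\in H^s$ with $\|\theta_0\|_{H^s}\lesssim \varepsilon$ and $\supp\theta_0\subset B_\varepsilon$.

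I would then build the solution as $\theta=\sum_n \theta^{(n)}$, where each $\theta^{(n)}$ is the Theorem~\ref{T00} evolution of $\theta_0^{(n)}$. Rigorously, one first solves SQG with the smooth truncated data $\sum_{n\le N}\theta_0^{(n)}$ via standard local theory, then shows that the truncated solution stays uniformly close to $\sum_{n\le N}\theta^{(n)}$ on $[0,T_0]$, and finally passes to the limit $N\to\infty$. \emph{This is the main obstacle}: the Biot--Savart law is nonlocal, so each piece is transported by the total velocity rather than only by its own. However, the velocity induced by the outer pieces at the location of an inner piece is smooth (the outer pieces are supported away from the inner annulus) and vanishes at the origin by the $P$-fold and odd-odd symmetry, so it acts as a divergence-free essentially linear deformation on the inner scale; combined with the shortness of the time $T_0$, this cannot disrupt the delicate norm-inflation mechanism. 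Conversely, the velocity induced by the inner pieces on any outer annulus is negligible by the $\|\theta^{(n)}\|_{L^\infty}\le \lambda_n^{-1}$ and shrinking-support bounds in \eqref{black_box_claims}. Quantifying these cross interactions yields a classical solution, in the sense of Definition~\ref{definition_classical_sol}, on some $[0,T]$ with $T\le T_0$.

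For the regularity profile at each $t$, the disjointness of supports plus rapid separation of scales produces the near-orthogonality estimate $\|\theta(\cdot,t)\|_{H^\beta}^2\approx \sum_n \|\theta^{(n)}(\cdot,t)\|_{H^\beta}^2$ (derived from the Sobolev--Slobodeckij form of the norm, with cross-support terms suppressed by inverse powers of the large inter-annulus distances). Inserting \eqref{black_box_Hb}, the dominant contribution of each $\theta^{(n)}$ scales like $\lambda_n^{c_2(\beta-s)+c_3\beta t}$ up to factors polynomial in $K_n^{-1}$ and $\log\lambda_n$. Setting $\bar c\coloneqq c_3/c_2$, this exponent is strictly negative precisely when $\beta<s/(1+\bar c t)$. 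Choosing $\{\lambda_n\}$ to grow fast enough, the series converges for $\beta\le s/(1+\bar c t)$ (the critical endpoint being rescued by the auxiliary $K_n^{-\beta t/(s-1)}$ factor), whereas for any $\beta>s/(1+\bar c t)$ individual terms diverge, forcing $\theta(\cdot,t)\notin H^\beta$.

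For uniqueness, I first check that $\theta$ itself lies in the uniqueness class: picking $\delta\in (0,s-1)$ small and $T$ small enough that $1+\delta<s/(1+\bar c T)$, the previous step gives $\sup_{t\in [0,T]}\|\theta(\cdot,t)\|_{H^{1+\delta}}<\infty$. For any other solution $\tilde\theta$ with the same initial data and $\sup_t \|\tilde\theta(\cdot,t)\|_{H^{1+\delta}}<\infty$, set $\phi\coloneqq \theta-\tilde\theta$; it satisfies $\p_t\phi+v[\tilde\theta]\cdot\na\phi+v[\phi]\cdot\na\theta=0$. An $L^2$ energy estimate kills the transport term by incompressibility of $v[\tilde\theta]$ and bounds the second term by $\|v[\phi]\|_{L^2}\lesssim \|\phi\|_{L^2}$ (boundedness of the Riesz transform) combined with the Sobolev embedding $H^{1+\delta}\hookrightarrow W^{1,p}$ for some $p>2$ to control $\na\theta$. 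Gr\"onwall's inequality then forces $\phi\equiv 0$ on $[0,T]$, yielding the uniqueness statement.
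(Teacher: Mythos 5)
The existence construction and the $H^\beta$ profile computation match the paper's strategy quite closely: both glue disjointly-supported norm-inflating solutions from Theorem~\ref{T00} on shrinking annuli, both exploit $P$-fold and odd-odd symmetry (via Lemma~\ref{decay}) to control cross-interactions between annuli, and both compute $\|\theta(\cdot,t)\|_{H^\beta}$ using Sobolev--Slobodeckij near-orthogonality (Lemma~\ref{L_ss_cons}) and the scaling \eqref{black_box_Hb}. You are vague where the paper supplies precise gluing estimates (Lemma~\ref{existence}, with explicit $H^3$ error control and a continuity argument), but the ideas are the right ones.

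The uniqueness argument, however, is a genuine gap. Setting $\phi=\theta-\tilde\theta$, the $L^2$ estimate requires bounding $\int v[\phi]\cdot\nabla\theta\,\phi$ by $C\|\phi\|_{L^2}^2$. The operator $v=\mathcal{R}^\perp$ is a degree-zero Calder\'on--Zygmund operator, so $v[\phi]$ has the same integrability as $\phi$: it is bounded on $L^2$ but gains no integrability. With $\theta\in H^{1+\delta}$ you only get $\nabla\theta\in L^p$ for some finite $p=2/(1-\delta)$, and the H\"older triple $\|v[\phi]\|_{L^q}\|\nabla\theta\|_{L^p}\|\phi\|_{L^2}$ then forces $q=2/\delta>2$, a norm you cannot control by $\|\phi\|_{L^2}$. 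The only way to close with $q=2$ is $\nabla\theta\in L^\infty$, i.e.\ Lipschitz regularity, which is far above $H^{1+\delta}$ (and is precisely why the classical SQG uniqueness theory requires roughly $H^2$ or $C^{1,\alpha}$). Indeed, an estimate of the kind you propose would give Lipschitz continuous dependence in $L^2$ for $H^{1+\delta}$ data, which contradicts the very loss-of-regularity phenomenon the theorem establishes. The paper circumvents this by an entirely different mechanism (Lemma~\ref{uniqueness}): it does not compare the two solutions globally via an energy identity, but rather uses the $P$-fold symmetry, the $O(|x|^{P-1})$ decay of $u$ near the origin from Lemma~\ref{decay}, and the nested-annulus structure to show that, for \emph{any} competitor in $H^{1+\delta}$, the inner piece stays confined near the origin and the outer part is forced to agree with $\theta^n$ in $H^3$ away from $0$, up to an error that vanishes as $n\to\infty$. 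This ``layer peeling'' argument is the novel ingredient that makes uniqueness work below the usual well-posedness threshold; it cannot be replaced by a Gr\"onwall estimate of the type you describe.
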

\begin{rem}
    We use the symmetries of the solution to help us obtain several useful error estimates.  Moreover, we have uniqueness in the class of  solutions in $L^\infty_t H^{1+\delta}_x$ (for any $\delta$), even those breaking the $P$-fold symmetry.  
\end{rem}

We note that the proof of Theorem~\ref{T01} does not use any arguments by contradiction. Instead, the proof is purely constructive. The construction is sufficiently  robust to obtain very precise information about the explicit behavior of the solution. This is particularly noteworthy since Theorem~\ref{T01} is concerned with solutions of regularity below any well-posedness class. In that sense the existence of classical solutions is already non-trivial. In fact, despite low regularity class, our technique allows us to know exactly what is the regularity of solution, which makes Theorem~\ref{T01} the first result of this type in incompressible fluid mechanics. It also contrasts with other recent ill-posedness results, such the ill-posedness and nonexistence \cite{CM} for the SQG equation (in a similar regularity class of initial data), strong ill-posedness of the Euler equations in the critical spaces \cite{BL1,EJ} (see also \cite{BL2}) as well as the gap loss of Sobolev regularity for the 2D Euler equations \cite{CMO} in supercritical spaces (or see also \cite{JEuler}).  We also note a recent work \cite{JMO} on continuous loss of Sobolev regularity for the 3D incompressible Euler equations.

We emphasize that the recent results \cite{CM,CM1,CM2,CMO} regarding ill-posedness in the supercritical regime demonstrate growth due to the shear flow deformation, namely due to the component $\p_2 v_1$ or $\p_1 v_2$ of the deformation matrix $\nabla v$, which is constructed in polar coordinates $(r,\alpha)$. Namely, the ansatz for the initial data and the corresponding pseudosolution include a radial term $f(r)$ and a term of the form $g(r) \sin (N \alpha )$. This way the velocity $v[f]$ generated by $f$ has only angular component $v_\alpha [f]$, which causes shearing motion, roughly speaking, inside $\sin (N\cdot )$, which in turn causes growth of the radial derivative in time. This demonstrates the growth mechanism due to a shear flow construction.

We further emphasize that, regarding ill-posedness results in the supercritical regime in general, it is important to keep in mind not only a growth scenario of the solution, but also sufficient control of the solution. 
In the context of shear flow type constructions this is achieved, by making use of the preservation of the $N$-fold symmetry of the equation. For example, in the case of the $2$D Euler equations the radial component of the velocity can be estimated with an additional factor of $N^{-1}$, see \cite[Lemma~6]{CMO}.\\

Apart from the shear flow type growth, one also expects growth from the diagonal entries of the deformation matrix $\nabla v$. Due to the divergence-free constraint, the only possible scenario is the hyperbolic point type growth, where the velocity is of the form $(-x,y)$ near a point in $\R^2$. This is the leading mechanism allowing the result of Theorem~\ref{T00}. Such velocity causes squeezing in the $x_1$ direction and stretching in the $x_2$ direction, and it has been employed in various results in the last decade. For example, it was explored in the case of the Euler equations by \cite{BL1}, and more recently by \cite{JEuler}, as well as by \cite{BCM} in the case of the stable IPM equation.
We note that each of these examples is concerned with loss of regularity in either critical regime, or ``almost critical'' (see a related work \cite{BC}, for example). In such regularity class it is still possible to keep track of the unique solution using, for example, the Yudovich class in the case of the 2D Euler equations.\\

We emphasize that, in the context of hyperbolic point growth, it is extremely difficult to keep track of the unique solution in the strictly supercritical regime. The main reason for this is the lack of any structure of the solution which is deformed by a hyperbolic point. This is in contrast to the shear flow type growth, where $N$-fold symmetry preservation allows for sufficient bounds. \\

One of the main points of Theorem~\ref{T01} is to demonstrate that sufficient control of the solution that is being deformed by a hyperbolic point is possible in the case of the SQG equation. Since the advected scalar $\theta $ and the velocity field $v[ \theta ]$ are of the same order (i.e. $v[\theta ] = \mathcal{R}^\perp \theta$), Theorem~\ref{T01} demonstrates that one can allow the growth of the deformation matrix $\nabla v$ dictated by the hyperbolic point, while still controlling the resulting errors between the exact solution $\theta = \theta_{\rm b}+\theta_{\rm p}$ and an appropriate pseudosolution~$\overline{\theta}$ (see Section~\ref{sec_ideas} below).  Moreover the construction is robust enough to guarantee compact support in space. 

As for the uniqueness, we use novel ideas that allow us to obtain uniqueness in a class well below the usual $H^2$ regularity required for SQG.
 In particular, if the initial data constructed for Theorem~\ref{T01} gives rise to any other classical solution, then it must be, in a sense, a ``very wild'' solution, which suggests that such solutions might not even be possible. We also note that the method used to obtain uniqueness does not use very specific properties of the equation, such as the $P$-fold symmetry, and one should be able to apply similar ideas to constructions for other active scalar equations.

\subsection{Ideas of the proof} \label{sec_ideas}

The proofs of Theorems \ref{T00} and \ref{T01}, follow from three main steps:\\

\noindent\texttt{Step 1.} We construct a fundamental norm inflation scenario due to a hyperbolic point in cartesian coordinates (see Figure~\ref{fig1} for a sketch). \\

Namely, given $s\in (3/2,2)$, $K\geq 1$ and sufficiently large $\lambda >1$, we construct initial data in the form of the sum of a background initial condition $\omega (x,0)$ and perturbation initial condition $\omega_{\rm p} (x,0)$ such that there exists $T_0>0$ and constants $c_2,c_3,c_4$ such that the unique solution exists on $[0,T_0]$, and can be written as a sum of the background $\omega$ and perturbation $\omega_{\rm p}$ with
\eqnb
\begin{split}
\supp\, \left( \omega (\cdot ,t) + \omega_{\rm p} (\cdot , t) \right) &\subset B(1/\lambda ),\\
\| \omega (\cdot , t) \|_{H^\beta } &\sim K^{-1} \left( \lambda (\log \lambda )^{-1} K^{-1} \right)^{\frac{\beta -s}{s-1}},\\
 \| \omega_{\rm p} (\cdot , t) \|_{H^\beta } &\sim K^{-1-\frac{\beta t}{s-1}} \lambda^{c_2 (\beta-s)+c_3 \beta t } (\log \lambda )^{-c_4 \beta t}
\end{split}
\eqne
for all $\beta \in [0,s]$, $t\in [0,T_0]$
in which the growth visible in the last power of $\lambda$  is due to a hyperbolic velocity generated by $\omega (\cdot ,t)$. We note that the role of $K$ is to guarantee that $\| \omega (\cdot ,0)\|_{H^s} \sim \| \omega_{\rm p} (\cdot ,0)\|_{H^s} \sim K^{-1}$.\\

\noindent\texttt{Step 2.} We modify the fundamental scenario to obtain a $P$-fold symmetric norm inflation solution supported in an annulus (see Figure~\ref{fig2} for a sketch). \\

Namely, we consider the main building blocks used in Step 1, and, for a given $P\in \N$, we arrange them in a $P$-fold manner around the origin to obtain a solution $\theta$ satisfying the claim of Theorem~\ref{T00}. We emphasize that each such solution has three free parameters: the concentration parameter $\lambda>1$, $K\geq 1$, which determines the amplitude of the solution, and $P\in \N$, which gives invariance with respect to rotation by angle $2\pi/P$. In particular the solution is supported in $B(0,2\lambda^{-1/2}) \setminus  B(0,\lambda^{-1/2}/2)$, is $P$-fold symmetric, and its $H^s$ norm at $t=0$ is $P^{1/2} K^{-1}$. \\

\noindent\texttt{Step 3.} We assemble the $P$-fold symmetric norm inflation solutions of Step 2  to obtain a solution losing Sobolev regularity continuously in time. \\

Namely, we choose sufficiently large $P\in \N$ to obtain high-order cancellations near the origin, so that the solutions constructed in Step 2 (i.e. the solutions claimed by Theorem~\ref{T00}) can be assembled into an infinite sequence of annular layers. Namely we  take $K_i \coloneqq P^{1/2} 2^i / \varepsilon$ (so that the $H^s$ norm at $t=0$ of the assembled layers equals $\varepsilon$), and we choose the $\lambda_i$'s to be increasing sufficiently fast to ensure existence of the assembled solution (if $P$ is large enough) as well as uniqueness in the desired class.\\

In Sections~\ref{sec_step1}--\ref{sec_step3} below we describe Steps 1--3 (respectively) in some more detail.

\subsubsection{Step 1. Fundamental norm inflation scenario}\label{sec_step1}
 We start by considering initial conditions of the form
\eqnb\label{bg_intro}
\oo (x) \coloneqq K^{-1} \lambda^{1-s} N^{-s} g(\lambda x)\sin(\lambda N x_{1}) \sin(\lambda N x_{2}),
\eqne
 where $g\in C_c^\infty (B(0,1);[0,1]  )$ is an arbitrary nontrivial function, and $K,\lambda , N\geq 1$ are large constants. We note that the powers of $\lambda , N$ in \eqref{bg_intro} are chosen so that the $\dot{H}^{s}$ is the same for all values of $\lambda$, $N$. This is important, since we want our solution to be initially in $H^{s}$.

Furthermore, the initial condition \eqref{bg_intro} is  related to the dynamics of the SQG equation. Namely, if we take $g=1$, i.e. consider initial conditions of the form
$$ K^{-1} \lambda^{1-s} N^{-s} \sin(\lambda N x_{1}) \sin(\lambda N x_{2}),$$
then we obtain a stationary solution to the SQG equation \eqref{sqg}, since the transport term vanishes. This makes it a great starting point for norm growth generation, since we know exactly the behaviour of the solution and it generates a strong deformation at the origin.

Unfortunately, this kind of solution is not compactly supported, which is why we include the cut-off function $g(\lambda x)$ in \eqref{bg_intro}. With such a cutoff function, the solution will no longer be stationary. However, we can show that the cancellation mechanism remains strong enough to prove that
\begin{equation}\label{goodapprox}
    \omega (x,t)\approx \oo (x)
\end{equation}
in the $C^{1,\alpha}$ norm for some $\alpha >0$ and some times $t>0$, where $\omega (x,t)$ denotes the solution to the SQG equation \eqref{sqg} with initial condition \eqref{bg_ests}. Note that $C^{1,\alpha}$ is already a well-posedness class for the SQG equation (recall~\cite{Wu}), and so we expect such bound to be sufficient to keep track of the solution. The cancellation can be quantified by considering the \emph{pseudovelocity},
\eqnb\label{ov_intro}
\ov [\oo ] \coloneqq  \frac{g(\lambda x)}{\sqrt{2}K\lambda^{s-1}N^s } \begin{pmatrix}
-\sin (\lambda N x_1 )\cos (\lambda N x_2) \\
\cos (\lambda Nx_1) \sin (\lambda Nx_2)
\end{pmatrix},
\eqne
which arises when the Riesz transform $\mathcal{R}^\perp$ in the Biot-Savart law $v[\oo ] = \mathcal{R}^\perp \oo$ does not see $g$. Then 
\eqnb\label{ggg}
\begin{split}
 \ov [\oo ] \cdot \na \oo &= \frac{g(\lambda x)}{\lambda^{s-1}N^s}\sin(\lambda N x_{1}) \cos(\lambda N x_{1}) \cdot  (\p_1 g - \p_2 g) \cdot \lambda^{2-s} N^{-s} \sin(\lambda N x_{1}) \sin(\lambda N x_{1}) \\
&\lec g(\lambda x) \| g \|_{C^1} \lambda^{3-2s} N^{-2s},
\end{split}
\eqne
which gives an additional power of $N^{-1}$, as compared to the scaling of the left-hand side and the powers of $\lambda$, $N$ appearing in \eqref{bg_intro}.

It is essential that, in order to make use of the cancellation, as well as expect some growth due to the hyperbolic point, $\lambda $ and $N$ \emph{must} be related by 
\begin{equation}\label{lambdaN}
    \lambda^{2-s}N^{1-s}=K\log N.
\end{equation}
Indeed, only under this relation we do not obtain an exponential growth (in $\lambda$ or $N$) of the deformation matrix $\nabla v$. Namely, \eqref{ov_intro} shows that we should expect $\nabla v [ \oo ] = O(\lambda^{2-s } N^{1-s} ) = O(\log N)$ as $\lambda, N\to \infty$, and so, an error estimate on $\omega(t) - \oo$ gives that 
\[
\| \omega (t) - \oo \|_{C^{1,\alpha }}  \leq c \| \nabla v [\oo ] \|_{L^\infty } \| \omega (t) - \oo \|_{C^{1,\alpha }} + \text{L.O.T.} ,
\]
see \eqref{bg_c1alpha} for details, where we denoted the lower order terms by ``L.O.T.''. A Gronwall estimate then implies that
\[\| \omega (t) - \oo \|_{C^{1,\alpha }} \leq  \text{(L.O.T.)} \exp (ct \log N) = N^{ct} \text{(L.O.T.)},
\]
and so, if the lower order term involve some negative power of either $\lambda $ or $N$ (at least for small $\alpha \in (0,1)$), we obtain the $C^{1,\alpha}$ control of the error for small times, if $\alpha \in (0,1)$ is small. Moreover, we see that
\eqnb\label{def_mat_intro}
\nabla \ov [\oo ] (0) = \frac{1}{\sqrt{2}} \begin{pmatrix}
- \log N & 0 \\
0 & \log N 
\end{pmatrix},
\eqne
which shows that we should expect that the norm growth arising from the velocity $v[\oo ]$ will be, roughly speaking, of order
$$\ee^{\int_{0}^{t}(\p_{{1}}v_{1}[\oo ](0) )\d s}\approx N^{t/\sqrt{2}},$$
which is fast enough to potentially give us continuous loss of regularity, if an appropriate perturbation of \eqref{bg_intro}  is placed around the origin. This explains our choice of the relation \eqref{lambdaN}. As for the constant $K$ appearing in \eqref{lambdaN} and \eqref{bg_intro}, its role is to control the size of the $H^s$ norm of the solution, while keeping the size of the deformation matrix $\nabla v[\oo ]$ invariant. It will only become relevant in the final gluing procedure (see \eqref{choiceK} and \eqref{choiceKcons} for details).

We note that, by \eqref{lambdaN}, \eqref{ggg} becomes $\| \ov [\oo ] \cdot \na \oo \|_{L^\infty} \lec_K \lambda^{-1} N^{-2} (\log N )^2$, and similarly one can show that
\eqnb\label{cancel_C1}
\| \ov [\oo ] \cdot \na \oo \|_{C^{k,\alpha}} \lec  \lambda^{k+\alpha -1} N^{k+\alpha -2} (\log N)^2
\eqne
for each $k\geq 0$, $\alpha \in [0,1)$, see \eqref{G_ests}. In particular (see Lemma~\ref{L_holder_to_Hb} below)
\eqnb\label{cancel_Hbeta}
\| \ov [\oo ] \cdot \na \oo \|_{\dot H^\beta} \lec \| \ov [\oo ] \cdot \na \oo \|_{C^{1,\alpha }} \lambda^{\beta-\alpha -2}\lec \lambda^{\beta -2} N^{\alpha -1} (\log N)^2
\eqne
for every $\beta \in (1,2)$, $\alpha \in (\beta -1,1)$.
Similar bounds can then be obtained for $(v[\oo]-\ov [\oo ]) \cdot \na \oo $, which, combined with a careful study of the evolution of $\omega -\oo$, allow us to show \eqref{goodapprox} in appropriate function spaces, see Lemma~\ref{L_bg}.

Having established the reasons why \eqref{bg_intro} and \eqref{lambdaN} are a good choice for an almost stationary solution with a hyperbolic point of the velocity $v[\oo]$ at the origin, we now treat $\oo$ as a ``background'' solution, and consider a perturbation near the origin, which will admit growth due to the hyperbolic point.

For this, we choose the initial condition for the perturbation as 
\eqnb\label{pert_intro}
\oo_{\rm p} (x,0)\coloneqq \lt^{1-s}\nt^{-s}g(\lt x)\sin(\nt \lt x_{1}),
\eqne
where $\lt , \nt \gg \lambda$, see Figure~\ref{fig1} for a sketch. We note that, similarly to $\oo$, the $\dot H^s$ seminorm is the same for all values of $\lt, \nt$, and we can observe a very strong cancellation in their self-interaction,
$$v[\oo_{\rm p}]\cdot\nabla\oo_{\rm p}\approx 0.$$ 

However, $\oo_{\rm p}$ is anisotropic in the sense that it involves only oscillations in $x_1$, in contrast to the background $\oo$, which involves oscillations in both $x_1$ and $x_2$. The reason for this is twofold: the first reason is the fact that the  cancellation in the self-interaction resulting from initial data \eqref{pert_intro} is a little bit more robust, although this is not a crucial point of our construction. The second reason is that there is no need to keep track of the deformation of the perturbation in $x_2$, since we only expect stretching in this direction. 
In fact, the purpose of $\sin (\lt \nt x_1)$ in \eqref{pert_intro} is to capture the squeezing in the $x_1$ direction. To this end, we approximate $v[\omega ]$ by an affine vector field with deformation matrix $\na v[\omega ](0)$, namely by \[
\na v[\omega ](0) x,
\]
where we used the fact that $v[\omega ](0)=0$, and we approximate the SQG evolution from initial data \eqref{pert_intro} by passive scalar advection along velocity $v [\omega ] (0 )x$. Noting that we  can use \eqref{def_mat_intro} to approximate $\na v[\omega ](0)$, we therefore suppose that $\theta (t) = \omega (t) + \omega_{\rm p} (t)$ is a solution to the SQG equation \eqref{sqg} with initial data of the form of the sum of \eqref{bg_intro} and \eqref{pert_intro}, and we expect that 
\eqnb\label{agg}
\| \omega_{\rm p} (t) - \oo_{\rm p}(t) \|_{C^{1,a}} \ll 1
\eqne
for some times $t>0$, under an appropriate choice of sufficiently small $a\in (0,1)$ and sufficiently large $\lt, \nt$, where
\[
\oo_{\rm p} (x,t) \coloneqq \lt^{1-s}\nt^{-s}g(\lt y)\sin(\nt \lt y_{1})
\]
and
\eqnb\label{whatisy}
y = (y_1,y_2) \coloneqq 2^{-1/2} (N^{t/\sqrt{2}}x_1,N^{-t/\sqrt{2}}x_2).
\eqne
We will choose
\[
\lt = \lambda^B, \qquad \text{ and }\qquad \nt = \lt^{1-\eta },
\]
for some large constant $B\geq 1$, and small constant $\eta >0$. We choose $\eta$ sufficiently small so that the self interaction of $\omega_{\rm p}$ is under control. This is possible, since we can read from \eqref{pert_intro} that the $C^1$ norm of $\oo_{\rm p}$ is, roughly, of size $\lt^{2-s} \nt^{1-s} = \lt^{3-2s + \eta(s-1)}$, which gives a negative power of $\lt$ provided $\eta >0$ is sufficiently small, depending on $s\in (3/2,2)$. We then pick $B$ sufficiently large, so that the dynamics (and hence also error estimates) of the background $\omega (t)$ and the perturbation $\omega_{\rm p}(t)$ can be decoupled, and finally we choose $a\in (0,1)$ small enough so that, apart from the $C^1$ control we also obtain control of the left-hand side of \eqref{agg} by a negative power of $\lambda$ on some time interval, see Lemma~\ref{L_pert} for details. A direct computation then shows that 
\[
\| \omega_{\rm p} (t) - \oo_{\rm p} (t) \|_{H^s} \lec \lambda^c \lt^{-\eta }
\]
on the same time interval (see~\eqref{byinter} for details), where $c>0$ is a constant. This shows that, if $B$ is sufficiently large, the error in all Sobolev spaces $H^\beta$, $\beta \leq s$,  decays as $\lambda\to \infty$. Consequently, the regularity of the exact solution $\theta (t) = \omega (t) + \omega_{\rm p}(t)$ is dictated by the regularity of $\oo$ and $\oo_{\rm p}$ in such spaces. In particular, since any derivative of $\oo_{\rm p}$ includes a power of $N^t$ (recall~\eqref{whatisy}), we expect continuous growth in time of the derivatives of $\theta (t)$. 

\begin{center}
 \includegraphics[width=10cm]{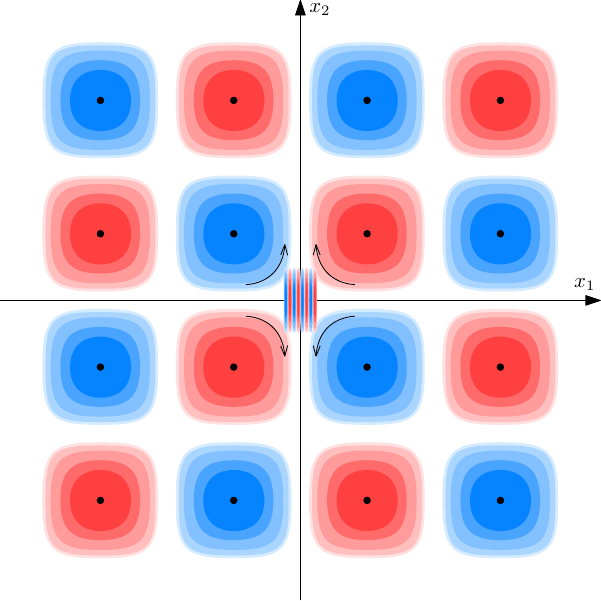}
 \end{center}
 \nopagebreak
 \captionsetup{width=.8\linewidth}
  \captionof{figure}{A sketch of the norm inflation scenario in $x_1,x_2$.}\label{fig1} 

We emphasize at this point that the above estimates can be  closed since $\theta$ is of the same order as $v[\theta ]$, which explains the point made above this section. In particular, if one considers instead a generalized SQG equation (e.g. the $2$D Euler equation), then the Biot-Savart law can introduce lower order terms that destroy the geometry considered here.\\

The above analysis suggests that we should consider a sequence of such solutions $\theta$ with an increasing sequence of $\lambda$'s and glue them to obtain continuous loss of regularity. Contrary to the previous constructions in the supercritical regime \cite{CM,CM1,CM2,CMO}, we now demonstrate that the construction is robust enough to obtain a glued solution which has compact support in space.

\subsubsection{Step 2. Norm inflation on an annulus}

 Here we discuss how the fundamental norm inflation scenario, described in the subsection above, can be modified to obtain  $P$-fold symmetry, support in space of the form of annulus of radius $\lambda^{-1/2}$, as well as quantitative estimates of \eqref{black_box_claims}. We refer the reader to Section~\ref{sec_norm_infl} for full details.

\begin{center}
 \includegraphics[width=12cm]{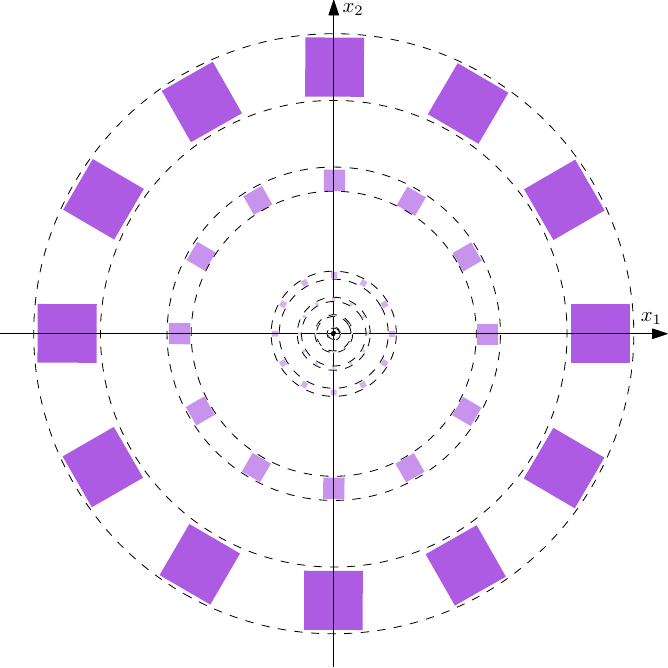}
 \end{center}
 \nopagebreak
 \captionsetup{width=.8\linewidth}
  \captionof{figure}{A sketch of the initial data giving instantaneous continuous loss of Sobolev regularity (Theorem~\ref{T01}).  The initial condition of each fundamental norm inflation solution, as depicted in Figure~\ref{fig1}, is supported in $B(0,\lambda^{-1})$, and $P$ of its translated and rotated copies are placed in an annulus of radius $\lambda^{-1/2}$, as depicted here by purple squares. The darkness of the squares corresponds to sizes of the  $L^\infty$ norms.
 A sequence of the $\lambda_i$'s is considered, resulting in infinitely many annular layers concentrating at the origin.    Here $P=12$. }\label{fig2}

Namely, we consider initial condition $\theta(\cdot ,0)$ consisting of $P$ copies of  the sum of the background \eqref{bg_intro} and the perturbation \eqref{pert_intro} (which, combined, have support in space of diameter $\sim \lambda^{-1}$), and  we arrange in a $P$-fold manner inside an annulus  of radius $\sim \lambda^{-1/2}$  around the origin, where $P\in \N$. We will refer to the sum of such copies as $\tb + \tp$, where $\tb$ corresponds to the background parts and $\tp $ to the perturbations. The main challenge of such geometry of the initial data is that it breaks the symmetry of the hyperbolic points. Namely, due to the interactions between the copies, each copy no longer enjoys the odd-odd symmetry, and each hyperbolic point is no longer stationary. In order to obtain sufficient control of such $P$-fold symmetric arrangement one needs to observe that, along particle trajectories, the deformation matrix retains its diagonal structure (see~\eqref{size_of_A}), as well as observe that the velocity field vanishes at some point near the hyperbolic point,  which is captured by Lemma~\ref{L_A} below.  This way we can control the distance traveled by particles using the $C^1$ norm of the velocity, rather than a na\"ive $L^\infty$ approach. (We note that the main idea of this trick is similar to the elementary fact that particle trajectories of a $P$-fold symmetric solution to the SQG equation that is supported away from $0$, $P\geq 2$, do not approach the origin in finite time, see Lemma~\ref{L_approach} for details.) This way we can ensure that the particle trajectories do not drift far away from the initial positions for some $t>0$ (see \eqref{eta_gronwall}), which is a sufficiently strong estimate to compensate for the symmetry breaking. Consequently, we obtain the claim of the norm inflation result, Theorem~\ref{T00}.\\

\subsubsection{Step 3. Gluing infinitely many annular layers}\label{sec_step3}

 Let us now describe how to assemble a sequence of $P$-fold symmetric norm inflation solutions $\theta_\lambda$ given by Theorem~\ref{T00} to obtain instantaneous continuous loss of Sobolev regularity (Theorem~\ref{T01}). 

Each such solution $\theta_\lambda$ is supported on an annulus of radius $\lambda^{-1/2}$ (see Figure~\ref{fig2}, where each ring corresponds to a rapidly growing solution, and each purple square corresponds to the piece of a solution evolving from an initial condition defined in \eqref{bg_intro}, \eqref{pert_intro}. The gluing requires  two essential lemmas to obtain existence and uniqueness of solution propagating from the glued initial data.\\

The existence lemma (Lemma~\ref{existence}) states, roughly speaking, that, if $\theta $ is a $P$-fold symmetric solution to the SQG equation \eqref{sqg} that is supported in $B_R\setminus B_r$ for some $0<r<R$, and $\theta_\lambda$ is given by Theorem~\ref{T00}, then for sufficiently large $\lambda$ the solution $\theta_{\rm new}$ of the SQG equation with initial data $\theta (0) + \theta_\lambda (0)$ exists on the same time interval and differs from $\theta(t) + \theta_\lambda (t)$ in a high regularity norm (such as $H^3$, for example) by an error of order $\lambda^{-1}$. In such a lemma it is essential to use the fact that 
$$|v[\theta] (x,t) |\leq O(|x|^{P-1})$$
as $x\to 0$, and consider sufficiently large $P$, so that this decay allows the two parts $\theta (0)$, $\theta_\lambda(0)$ of the initial data to evolve independently up to a small error  (to be precise $P$ must be greater or equal $C+12$, where $C$ is the universal constant appearing in \eqref{hereisP} below, so that we can obtain the estimate \eqref{theta_006}). The lemma enables us to consider the solution propagating from the first $n$ layers of the initial condition, and add the $(n+1)$-st layer. As a consequence, we can prove that the interactions between the layers remain small enough to show convergence in the limit $n\to \infty$, see Fig.~\ref{fig2}. Moreover, it ensures the  convergence is good enough to show that the limit function is a classical solution in the sense of Definition~\ref{definition_classical_sol}.\\

The uniqueness lemma (Lemma~\ref{uniqueness}) states that, if $\theta $ is a solution to the SQG equation \eqref{sqg} supported in $B_R\setminus B_r$ for some $0<r<R$ and $\theta_\lambda (x)$ is an initial condition supported in $B_{2\lambda^{-1/2}}$ such that $\| \theta_\lambda \|_{\infty }\leq 2\lambda^{-1}$, then  any solution $\theta_{\rm new}\in L^\infty_t H^{1+\alpha }_x $ (where $\alpha>0$ is arbitrary) of the SQG equation with initial data $\theta (0)+\theta_\lambda$,  remains  arbitrarily close to $\theta $ on $B^c_{r/2}$, if $\lambda$ is sufficiently large. In this lemma, it is essential that $\theta_\lambda$ is treated only as initial condition with given support  and with given $L^\infty$ norm estimate. Indeed, here  $\theta_\lambda$ represents all layers, from, say, $n+1$ to $\infty$, and so its evolution in time will involve self-interactions between all layers, and as such the particle trajectories cannot be estimated. Instead, we suppose that $\theta_{\rm new}(t) = \widetilde{\theta}_\lambda (t) + \widetilde{\theta} (t)$, where  $\widetilde{\theta}_\lambda (0)= \theta_\lambda$ and $\widetilde{\theta } (0) = \theta(0)$, and we observe that the particles originating in the support of $\widetilde{\theta}_\lambda$ can be estimated using the inequality
\[
\| v [\widetilde{\theta}_\lambda ] \|_{\infty } \lec_\alpha \|\widetilde{\theta}_\lambda\|_\infty \log (2+ \| \widetilde{\theta}_\lambda \|_{H^{1+\alpha }} )
\]
(see \eqref{repl}). This trick allows us to use the $L^\infty$ conservation $\|\widetilde{\theta}_\lambda (t) \|_\infty \leq \| \theta_\lambda \|_\infty$ and the assumption on the regularity class $L^\infty_t H^{1+\alpha }_x$ of $\theta_{\rm new}$ to see that  $\widetilde{\theta}_\lambda$ will not increase its support to anywhere near $B_{r/2}^c$. Thus $\widetilde{\theta}_\lambda$ will be only exerting very small influence on $\theta_{\rm new}$ outside of $B_{r/2}$, and so, for sufficiently large $\lambda$, $\theta_{\rm new}$ will remain close to $\theta$ on $B^c_{r/2}$, as required. 
In other words, for any solution in $H^{1+\alpha}$ the inner layers will move slowly enough to not change their support too much, and thus their interactions with the outer layers have to remain small for the times considered.

In particular, the uniqueness lemma implies that nonuniqueness cannot arise from any of the layers, so that it could only propagate from the origin. However, this can also be excluded due to the assumed regularity of solutions (in Definition~\ref{definition_classical_sol}), see the comments following \eqref{uniqueness_appl}.  We also refer the reader to the discussion following the proof of Lemma~\ref{existence} for an explanation of how the existence lemma inspires the uniqueness lemma.\\

Using the existence and uniqueness lemmas allows us to construct a sequence of $\lambda_{n}$'s for which the initial condition $\sum_{n\geq 1} \theta_{\lambda_n} (0)$ gives rise to a unique classical solution to the SQG equation \eqref{sqg}, where $\theta_{\lambda_n}$ are the norm inflation solutions given by Theorem~\ref{T00}, if $P$ is large enough. The claim of continuous loss of regularity in Theorem~\ref{T01} then follows directly from the quantitative estimates \eqref{black_box_Hb}, see~\eqref{ooo} for details.\\

The structure of the paper is as follows. We first introduce some preliminary concepts in Section~\ref{sec_prelims}. We then discuss some properties of functions with oscillations in Section~\ref{sec_osc_fcns}, and we prove Theorem~\ref{T00} in Section~\ref{sec_norm_infl} and Theorem~\ref{T01} in Section~\ref{sec_pf_main}.

\section{Preliminaries}\label{sec_prelims}

We will denote the tensor of all partial derivatives (in space) of order $J\geq 0$ by ``$D^J$''. We will denote by $c>0$ a generic absolute constant, which can change value from line to line. We will use ``$\supp$'' to refer to the support only in spatial variables, i.e.
\[ \supp\, f(x,t)=\{x\in \R^2 \colon  x\in \supp\, f(\cdot ,t)\}.\]

Given a function $f\colon \R^2 \to \R$ we will say it is \emph{$P$-fold symmetric} if it is $2\pi/P$-periodic in the angular variable $\alpha$, i.e., using polar coordinates,
$$f(r,\alpha)=f\left( r,\alpha+\frac{2\pi}{P} \right) $$
for every $r>0$, $\alpha \in [0,2\pi )$. We will often make use of the following Gronwall type estimate:
\eqnb\label{ode_fact}
\text{if }f'(t) \leq c f(t) + b \quad \text{ and }\quad f(0)=0, \qquad \text{ then }\quad f(t) \leq \frac{b}{c} \left( \ee^{ct}-1\right) \leq bt \ee^{ct}.
\eqne
We will use several basic properties of the velocity operator $v$ given by \eqref{bs}, namely, for $\alpha\in(0,1)$, $k\in\N$, $s\geq 0$
$$\| v[\theta]\|_{C^{k,\alpha}}\lec_{k,\alpha}\|\theta\|_{C^{k,\alpha}},\qquad \|v[\theta]\|_{H^s}\lec_s\|\theta\|_{H^{s}},$$
\eqnb\label{vel_est}
\| v[g] \|_{L^\infty } \lec_\alpha \| g \|_{L^\infty} \log (2+\| g\|_{C^{1,\alpha }} ) \quad \text{ and }\quad \| v[g] \|_{C^1 } \lec_\alpha \| g \|_{C^1 } \log (2+\| g\|_{C^{1,\alpha }} ).
\eqne

Finally, we recall the Sobolev-Slobodeckij characterization
\eqnb\label{ss}
\| f \|_{\dot H^s }^2 = C_s \int_{\R^2} \int_{\R^2} \frac{|f(x)-f(y)|^2}{|x-y|^{2+2s}}   \d x \,\d y \qquad \text{ for }s\in (0,1),
\eqne
see \cite[Proposition~3.4]{hitchhiker} for a proof.  We can use this to prove the following.

\begin{lemma}[$\dot H^s$ norm of a sum]\label{L_ss_cons}
Let $\{ f_j \}$ be such that $\supp\,f_j \subset B_{2R_j}\setminus B_{R_j}$ for some sequence $\{ R_j \} \subset (0,\infty )$ with $R_{j+1}\leq R_{j}/4$ for all $j\geq 1$. Then
\[
\left| \left\| \sum_{j\geq 1} f_j \right\|_{\dot H^s}^2 - \sum_{j\geq 1} \left\|  f_j \right\|_{\dot H^s}^2 \right| \lec_{s,\delta ,R_1}  \sum_{j\geq 1} R_j^{-2s-\delta } \| f_j \|_{L^2}^2
\]
for every $s\in (0,1)$ and every $\delta \in (0,1)$.
\end{lemma}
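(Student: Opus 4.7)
The plan is to apply the Sobolev–Slobodeckij characterization \eqref{ss} directly to $f\coloneqq \sum_j f_j$ and split
\[
\left|f(x)-f(y)\right|^2 = \sum_{j\geq 1}|f_j(x)-f_j(y)|^2 + 2\sum_{1\leq j<k}\bigl(f_j(x)-f_j(y)\bigr)\bigl(f_k(x)-f_k(y)\bigr).
\]
After integrating against $|x-y|^{-2-2s}$, the diagonal sum produces exactly $\sum_j\|f_j\|_{\dot H^s}^2$, so the quantity on the left-hand side of the claim reduces to the absolute value of the cross-term sum, which is what has to be estimated.

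The hypothesis $R_{k+1}\leq R_k/4$ forces the annular supports to be pairwise disjoint: for $k>j$ we have $2R_k \leq R_j/2 < R_j$, hence $f_j(x)f_k(x)=0$ pointwise. Therefore $\bigl(f_j(x)-f_j(y)\bigr)\bigl(f_k(x)-f_k(y)\bigr)=-f_j(x)f_k(y)-f_j(y)f_k(x)$, and by the $x\leftrightarrow y$ symmetry of the Slobodeckij kernel each cross integral collapses to $-4\int\!\!\int f_j(x)f_k(y)|x-y|^{-2-2s}\,dx\,dy$. On the product support $|x|\geq R_j$ and $|y|\leq 2R_k\leq R_j/2$, so $|x-y|\geq R_j/2$; combining this distance lower bound with the Cauchy–Schwarz estimate $\|f_j\|_{L^1}\lec R_j\|f_j\|_{L^2}$ (coming from $|\supp f_j|\lec R_j^2$) yields
\[
\left|\int\!\!\int \frac{f_j(x)f_k(y)}{|x-y|^{2+2s}}\,dx\,dy\right| \lec R_j^{-1-2s}R_k\,\|f_j\|_{L^2}\|f_k\|_{L^2}.
\]

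For the final summation I plan to set $a_j\coloneqq R_j^{-s-\delta/2}\|f_j\|_{L^2}$, so that the claimed right-hand side equals $\sum_j a_j^2$; after substitution the cross-term bound becomes proportional to $(R_k/R_j)^{1+s+\delta/2}a_ja_k$ up to powers of $R_j$ absorbed into the $(s,\delta)$-dependent constant. Since $R_k/R_j\leq 4^{-(k-j)}$, the coupling coefficients form a summable geometric sequence in $k-j$ with ratio $4^{-(1+s+\delta/2)}<1$, and Young's convolution inequality on $\ell^2(\N)$ yields $\sum_{j<k}(R_k/R_j)^{1+s+\delta/2}a_ja_k \lec \sum_j a_j^2$, completing the proof. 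The main subtle point is the symmetric splitting $\|f_j\|_{L^2}\|f_k\|_{L^2}\leq \tfrac12\bigl(w_{jk}\|f_j\|_{L^2}^2 + w_{jk}^{-1}\|f_k\|_{L^2}^2\bigr)$ with weights $w_{jk}$ chosen so that both resulting sums fit into $\sum_j R_j^{-2s-\delta}\|f_j\|_{L^2}^2$: the slack $\delta>0$ in the target exponent is precisely what keeps the decay rate $1+s+\delta/2$ strictly positive, so that the geometric tail in $k-j$ remains summable.
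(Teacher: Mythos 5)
Your proposal takes a genuinely different and more direct route than the paper's. The paper does not expand the square: it introduces an intermediate quantity $K$ (the sum of Slobodeckij integrals over moderately enlarged annuli $A_j=B_{4R_j}\setminus B_{R_j/2}$) and separately estimates $\bigl|\sum_j\|f_j\|_{\dot H^s}^2-K\bigr|$ and $\bigl|\|\sum_jf_j\|_{\dot H^s}^2-K\bigr|$, the latter via a decomposition of $\R^2\times\R^2$ into regions near and far from each annulus. In that route the $\delta$ is load-bearing: the computation produces $\sum_{j}\sum_{k\geq j}R_k^{-2s}\|f_k\|_{L^2}^2=\sum_k k\,R_k^{-2s}\|f_k\|_{L^2}^2$, and the polynomial factor $k$ is absorbed by paying $\delta$ in the exponent. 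Your expansion of the square, with the diagonal reproducing $\sum_j\|f_j\|_{\dot H^s}^2$ exactly and the off-diagonal cross terms controlled by disjoint supports, the distance bound $|x-y|\geq R_j/2$, and the Cauchy--Schwarz estimate $\|f_j\|_{L^1}\lec R_j\|f_j\|_{L^2}$, avoids the double sum altogether and is cleaner; the geometric ratio $R_k/R_j\leq 4^{-(k-j)}$ then closes the estimate by Schur/Young. This is correct, and in fact slightly sharper than the paper's argument.

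Two imprecisions should be repaired. First, the substitution $a_j\coloneqq R_j^{-s-\delta/2}\|f_j\|_{L^2}$ does not reduce the cross-term coefficient to $(R_k/R_j)^{1+s+\delta/2}$ up to a constant: one gets $R_j^{-1-2s}R_k\cdot R_j^{s+\delta/2}R_k^{s+\delta/2}=R_j^{\delta}\,(R_k/R_j)^{1+s+\delta/2}$, and the leftover $R_j^\delta$ depends on $j$; it is bounded only under the implicit hypothesis $R_1\leq 1$ (which is also needed in the paper at the step $k\lec_\delta R_k^{-\delta}$, and which holds in the application). You can eliminate the residual entirely with the normalization $b_j\coloneqq R_j^{-s}\|f_j\|_{L^2}$, which gives cross-term coefficient exactly $(R_k/R_j)^{1+s}\leq 4^{-(1+s)(k-j)}$ and yields the stronger conclusion $\lec_s\sum_jR_j^{-2s}\|f_j\|_{L^2}^2$, from which the stated bound follows for $R_j\leq 1$ via $R_j^{-2s}\leq R_j^{-2s-\delta}$. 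Second, the closing remark that the slack $\delta>0$ ``is precisely what keeps the decay rate $1+s+\delta/2$ strictly positive, so that the geometric tail remains summable'' is wrong: $1+s>0$ already holds independently of $\delta$, so summability of the geometric tail does not hinge on $\delta$ in your argument at all. In your route the $\delta$ is purely cosmetic; it is only genuinely needed in the paper's route.
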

\begin{proof}
We set
\[
K \coloneqq \sum_{j\geq 1} \int_{B_{4R_j}\setminus B_{R_j/2}} \int_{B_{4R_j}\setminus B_{R_j/2}} \frac{|f_j(x)-f_j(y)|^2}{|x-y|^{2+2s } }\d x \, \d y ,
\]
and we observe that
\[
\begin{split}
&\left| \sum_{j\geq 1} \left\|  f_j \right\|_{\dot H^s}^2 - K\right| =\left| \sum_{j\geq 1} \int_{\R^2} \int_{\R^2} \frac{|f_j(x)-f_j(y)|^2}{|x-y|^{2+2s } }\d x \, \d y - K\right| \\
&\hspace{1cm}\leq 2 \sum_{j\geq 1}  \int_{B_{R_j/2}} \int_{B_{4R_j}\setminus B_{R_j/2}} \frac{|f_j(x)|^2}{|x-y|^{2+2s } }\d x \, \d y+2\sum_{j\geq 1}  \int_{B_{4R_j}^c} \int_{B_{4R_j}\setminus B_{R_j/2}} \frac{|f_j(x)|^2}{|x-y|^{2+2s } }\d x \, \d y\\
&\hspace{1cm}\lec \sum_{j\geq 1} \left( \frac{1}{R_j^{2+2s}}  \int_{B_{R_j/2}} \int_{B_{4R_j}\setminus B_{R_j/2}} |f_j(x)|^2\d x \, \d y+ R_j^{-2s} \int_{B_{4R_j}\setminus B_{R_j/2}} {|f_j(x)|^2}\d x \right)\\
&\hspace{1cm}\lec \sum_{j\geq 1} R_j^{-2s} \| f_j \|_{L^2}^2 .
\end{split}
\]
As for $\left\| \sum_{j\geq 1} f_j \right\|_{\dot H^s}^2 - K$ we have 
\[
\R^2 \times \R^2 \setminus A_j \times A_j = \left( \left( \bigcup_j A_j \right)^c \times \R^2 \right)  \cup \bigcup_{j\geq 1} \left( A_j \times A_j^c \right) ,
\]
where $A_j \coloneqq B_{4R_j } \setminus R_{R_j/2}$. As for the first set on the right-hand side we have
\[
\begin{split}
&\int_{\left( \bigcup_j A_j\right)^c } \int_{\R^2 } \frac{|\sum_{k\geq 1}f_k (x) -\sum_{m\geq 1 } f_m(y)|^2}{|x-y|^{2+2s } }\d x \, \d y \\
&= \int_{B_{4R_1}^c } \int_{\R^2 } \frac{|\sum_{k\geq 1}f_k (x)|^2}{|x-y|^{2+2s } }\d x \, \d y +\sum_{m\geq 1}\int_{B_{R_{m}/2} \setminus B_{4R_{m+1}} } \int_{\R^2 } \frac{|\sum_{k\geq 1}f_k (x) |^2}{|x-y|^{2+2s } }\d x \, \d y \\
&\lec \left(  \int_{B_{4R_1}^c } \frac{1}{|y|^{2+2s } } \d y \right) \sum_{k\geq 1} \| f_k \|_{L^2}^2 \\
&+\sum_{\substack{m,k\geq 1 \\ m\geq k }}\int_{B_{R_{m}/2} \setminus B_{4R_{m+1}} } \int_{\R^2 } \frac{|f_k (x) |^2}{R_k^{2+2s } }\d x \, \d y+\sum_{\substack{m,k\geq 1 \\ m< k }}\int_{B_{R_{m}/2} \setminus B_{4R_{m+1}} } \int_{\R^2 } \frac{|f_k (x) |^2}{|y|^{2+2s } }\d x \, \d y \\
&\lec R_1^{-2s} \sum_{k\geq 1} \| f_k \|_{L^2}^2 +\sum_{k\geq 1 }R_k^{-2-2s} \| f_k \|_{L^2}^2 \int_{B_{R_k}}\d y +\sum_{k\geq 1 } \| f_k \|^2 \int_{B_{4R_k}^c} |y|^{-2-2s} \d y   \\
&\lec \sum_{k\geq j } R_k^{-2s} \| f_k \|_{L^2}^2,
\end{split}
\]
and, for the second set, we have
\[
\begin{split}
&\int_{A_j} \int_{A_j^c}\frac{|\sum_{k\geq 1}f_k (x) -f_j(y)|^2}{|x-y|^{2+2s } }\d x \, \d y \\
&= \int_{B_{4R_j}\setminus B_{R_j/2}} \int_{B_{R_j/2}}\frac{|\sum_{k>j}f_k (x) -f_j(y)|^2}{|x-y|^{2+2s } }\d x \, \d y + \int_{B_{4R_j}\setminus B_{R_j/2}} \int_{B_{4R_j}^c}  \frac{|\sum_{k=1}^{j-1}f_k(x) -f_j(y)|^2}{|x-y|^{2+2s } }\d x \, \d y \\
&\lec \int_{B_{4R_j}\setminus B_{R_j/2}} \int_{B_{R_j/2}}\left( \frac{|\sum_{k>j}f_k (x) |^2 +|f_j(y)|^2}{R_j^{2+2s } }\right) \d x \, \d y + \int_{B_{4R_j}\setminus B_{R_j/2}} \int_{B_{4R_j}^c}  \frac{|\sum_{k=1}^{j-1}f_k(x) |^2+|f_j(y)|^2}{|x|^{2+2s } }\d x \, \d y \\
&\sim R_j^{-2s} \sum_{k\geq j }  \| f_k \|_{L^2}^2 + R_j^{-2s } \int_{B_{4R_j}\setminus B_{R_j/2}} |f_j(y)|^2 \d y  +  \sum_{k=1}^{j-1} \int_{B_{2R_k}\setminus B_{R_k}} \frac{|f_k(x)|^2}{|x|^{2+2s}} \d x \\
&\lec \sum_{k\geq j } R_k^{-2s} \| f_k \|_{L^2}^2.
\end{split}
\]
Summing in $j$ this gives
\[\begin{split}
\iint_{\bigcup \left( A_j \times A_j^c \right) } \frac{|\sum_{k\geq 1}f_k (x) -\sum_{m\geq 1}f_m (x)|^2}{|x-y|^{2+2s } }\d x \, \d y &\lec \sum_{j\geq 1} \sum_{k\geq j } R_k^{-2s} \| f_k \|_{L^2}^2 \\
&\hspace{-2cm}= \sum_{k\geq 1} k  R_k^{-2s} \| f_k \|_{L^2}^2 \lec_{\delta ,R_1}  \sum_{k\geq 1}   R_k^{-2s-\delta } \| f_k \|_{L^2}^2.
\end{split}
\]
Summing the two inequalities above, we thus obtain
\[
\left| \left\| \sum_{j\geq 1} f_j \right\|_{\dot H^s}^2 - K\right| \lec_{s,\delta ,R_1} \sum_{k\geq j } R_k^{-2s-\delta} \| f_k \|_{L^2}^2,
\]
as required.
\end{proof}

We note that, given an affine velocity field,
\[
u=A(t) x +b(t),
\]
the solution to $f_t + u\cdot \nabla f =0$ is
\eqnb\label{transport1}
f(x,t) = f_0 \left( \ee^{-\int_0^t A(s) \d s} x - \int_0^t \ee^{-\int_0^s A(\tau ) \d \tau } b(s) \d s \right).
\eqne
Conversely, $f(x,t)\coloneqq f_0 (\ee^{-A(t)} x -b(t))$ satisfies $f_t + u\cdot \nabla f=0$ with
\eqnb\label{transport2}
u = A'(t) x + \ee^{A(t)}b'(t).
\eqne

\begin{lemma}[H\"older to $H^\beta$]\label{L_holder_to_Hb}
If $G\in C^{1,\alpha } (\R^2)$ and $\supp\,G \subset B_R$ then
\[
\| G \|_{\dot H^\beta} \lec \| G \|_{C^{1,\alpha}} R^{2-\beta +\alpha }
\]
for every $\beta \in (1,1+\alpha)$.
\end{lemma}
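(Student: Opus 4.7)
The plan is to reduce the estimate to the Sobolev-Slobodeckij characterization \eqref{ss} applied to $\nabla G$. Writing $\beta = 1+s$ with $s\coloneqq \beta - 1 \in (0,\alpha)$, one has $\| G \|_{\dot H^\beta}^2 \lec \sum_{i=1}^2 \| \partial_i G \|_{\dot H^s}^2$, so by \eqref{ss} it suffices to control
\[
I \coloneqq \int_{\R^2}\int_{\R^2} \frac{|\nabla G(x) - \nabla G(y)|^2}{|x-y|^{2+2s}} \d x\, \d y.
\]
The key preliminary observation is that since $G\in C^{1,\alpha}$ is supported in $B_R$, the continuous function $\nabla G$ must vanish outside $B_R$, and hence also on $\partial B_R$; applying the $\alpha$-H\"older bound with the nearest boundary point then yields the improved pointwise estimate
\[
\| \nabla G \|_{L^\infty} \lec \| G \|_{C^{1,\alpha}}\, R^\alpha.
\]

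Next I would split $I$ according to whether $|x-y|\leq 3R$ or $|x-y|>3R$, and use the symmetry of the integrand to reduce to the case $x\in B_R$ (since otherwise one of $\nabla G(x)$, $\nabla G(y)$ vanishes and the roles can be swapped). On the near region, the H\"older bound $|\nabla G(x)-\nabla G(y)|^2 \leq \| G \|_{C^{1,\alpha}}^2 |x-y|^{2\alpha}$ leaves an integrand $\lec \| G \|_{C^{1,\alpha}}^2 |x-y|^{2\alpha-2-2s}$, which is locally integrable because $\alpha>s$; integrating radially in $y$ out to distance $3R$ produces a factor $R^{2\alpha-2s}$, and an additional factor $R^2$ from integrating $x$ over $B_R$ yields the contribution $\lec \| G \|_{C^{1,\alpha}}^2 R^{2+2\alpha-2s}$. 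On the far region, the conditions $|x-y|>3R$ and $x\in B_R$ force $y\notin B_R$, so $\nabla G(y)=0$; combined with the improved bound $|\nabla G(x)|\lec \| G \|_{C^{1,\alpha}} R^\alpha$, the integrand is at most $\lec \| G \|_{C^{1,\alpha}}^2 R^{2\alpha}|x-y|^{-2-2s}$, which integrates to the same total $\lec \| G \|_{C^{1,\alpha}}^2 R^{2+2\alpha-2s}$ after integrating in $y$ (giving a factor $R^{-2s}$) and then in $x\in B_R$ (giving a factor $R^2$). Taking square roots finishes the proof, since $2(2-\beta+\alpha)=2+2\alpha-2s$.

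The only real subtlety is the improved $L^\infty$ bound on $\nabla G$: the naive estimate $\| \nabla G \|_{L^\infty} \leq \| G \|_{C^{1,\alpha}}$ on the far region would produce a contribution scaling as $R^{2-2s}$, which is much larger than the target $R^{2+2\alpha-2s}$ when $R\ll 1$ --- precisely the regime of interest in the applications such as \eqref{cancel_Hbeta}. Exploiting that $\nabla G$ vanishes on $\partial B_R$ and is $\alpha$-H\"older therefore supplies the extra factor $R^\alpha$ needed to match the claimed exponent uniformly in both regions.
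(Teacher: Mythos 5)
Your proof is correct and follows essentially the same approach as the paper: both apply the Sobolev--Slobodeckij characterization to $\nabla G$, split the double integral into near and far regions, use the H\"older seminorm on the near part, and exploit that $\nabla G$ vanishes outside $B_R$ to get the improved bound $\|\nabla G\|_{L^\infty}\lec \|G\|_{C^{1,\alpha}}R^\alpha$ on the far part. The only cosmetic difference is that you split by $|x-y|\lessgtr 3R$ while the paper splits by whether each variable lies in $B_{2R}$.
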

\begin{proof} Using the Sobolev-Slobodeckij characterization \eqref{ss}, we obtain  
\[
\begin{split}
\| G \|_{\dot H^\beta}^2 &= C_{\beta } \int_{\R^2} \int_{\R^2} \frac{|\nabla G(x)-\nabla G(y)|^2}{|x-y|^{2\beta}} \d x \,\d y\\
&\leq \| G \|_{C^{1,\alpha}}^2 \int_{B_{2R}} \int_{B_{2R}}  \frac{\d x \, \d y}{|x-y|^{2\beta-2\alpha}}  + 2\int_{B_{2R}} \int_{B_{2R}^c } \frac{| \nabla G(y) - \nabla G(0,2R)|^2\d x \,\d y}{|x-y|^{2\beta}}\\
&\lec \| G \|_{C^{1,\alpha}}^2 R^{4-2\beta +2\alpha}  , 
\end{split}
\]
as required.
\end{proof}

\begin{lemma}[Trajectories approaching the origin]\label{L_approach}
Suppose that $\theta $ is a $P$-fold symmetric solution of the SQG equation with $P\geq 2$ and let $\phi$ denote the particle trajectory of $v[\theta ]$. Then
\[
|\phi (x,t) | \geq \ee^{-\int_0^t \| \nabla v[\theta ] \|_\infty \d s} |x|.
\]
In particular, if $\supp\,\theta(\cdot ,0) \subset B_r^c$ then $\supp\,\theta(\cdot ,t) \subset B_{r\exp (-\int_0^t \| \nabla v[\theta ]\|_\infty \d s )}^c$.
\end{lemma}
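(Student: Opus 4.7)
The plan is to exploit the $P$-fold symmetry to show that the origin is a stationary point of the flow, and then close a Gronwall estimate on $|\phi(x,t)|$.

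First I would verify that $u[\theta](0,t) = 0$ for every $t$. Since $\theta(\cdot,t)$ is $P$-fold symmetric, i.e. $\theta(R_{2\pi/P} x, t) = \theta(x,t)$ where $R_\alpha$ denotes rotation by angle $\alpha$, and since the Biot-Savart kernel \eqref{bs} is rotationally equivariant (the kernel only depends on $x-y$ through $(x-y)^\perp/|x-y|^3$), a change of variables gives $u[\theta](R_{2\pi/P} x, t) = R_{2\pi/P}\, u[\theta](x,t)$. Setting $x=0$ yields $R_{2\pi/P}\, u[\theta](0,t) = u[\theta](0,t)$, and for $P\geq 2$ the only fixed point of the rotation $R_{2\pi/P}$ acting on $\R^2$ is the origin, hence $u[\theta](0,t)=0$.

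Next I would differentiate $|\phi(x,t)|$ along the flow. Since $\p_t \phi = u[\theta](\phi,t)$ and $u[\theta](0,t) = 0$, the mean value theorem gives
\[
|u[\theta](\phi(x,t),t)| = |u[\theta](\phi(x,t),t) - u[\theta](0,t)| \leq \|\nabla u[\theta](\cdot,t)\|_\infty |\phi(x,t)|.
\]
Then, on any interval where $\phi(x,t)\neq 0$, we have
\[
\left| \ddt |\phi(x,t)| \right| = \left| \frac{\phi(x,t)}{|\phi(x,t)|}\cdot u[\theta](\phi(x,t),t)\right| \leq \|\nabla u[\theta](\cdot,t)\|_\infty\, |\phi(x,t)|,
\]
so that $\ddt \log |\phi(x,t)| \geq -\|\nabla u[\theta](\cdot,t)\|_\infty$. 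Integrating in time yields the claimed bound
\[
|\phi(x,t)| \geq \exp\!\left(-\int_0^t \|\nabla u[\theta]\|_\infty \d s \right) |x|.
\]
(If $x=0$ there is nothing to prove since $\phi(0,t)=0$; and the bound guarantees that trajectories starting away from $0$ cannot reach $0$ in finite time, so the computation on $\{\phi\neq 0\}$ is in fact global in $t$.)

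The ``in particular'' part then follows immediately: the support of $\theta(\cdot,t)$ is the image of the initial support under $\phi(\cdot,t)$, and any $x\in \supp\,\theta(\cdot,0)\subset B_r^c$ satisfies $|\phi(x,t)|\geq r\,\exp(-\int_0^t\|\nabla u[\theta]\|_\infty \d s)$. The only mildly delicate point is the symmetry/equivariance argument giving $u[\theta](0,t)=0$; everything afterwards is a direct Gronwall/ODE argument of the form \eqref{ode_fact}.
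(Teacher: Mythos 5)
Your proof is correct and follows essentially the same route as the paper: establish $u[\theta](0,t)=0$ from the $P$-fold symmetry, then bound $\frac{\d}{\d t}|\phi|$ via the mean value theorem and apply Gronwall. The paper simply asserts $u[\theta](0,t)=0$ as a consequence of the $P$-fold symmetry, whereas you spell out the rotational-equivariance argument; otherwise the two proofs coincide.
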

\begin{proof}
 We have that 
\[
\frac{\d }{\d t} |\phi | \geq - |v[\theta ] (\phi )| \geq - \| \nabla v[\theta ] \|_{\infty } |\phi |,
\]
where we used the fact that $v[\theta ](0,t)=0$ for all $t$ (due to the $P$-fold symmetry of $\theta$). The claim follows by applying the Gronwall inequality.
\end{proof}
We recall that, if $f\in C^{2 } (\R^2 ; \R^2)$ and $\widetilde{f}(x) \coloneqq f(0) + \nabla f (0) \cdot x$ denotes the first order Taylor expansion of $f$ at $0$ then
\eqnb\label{c2_error}
\| f - \widetilde{f} \|_{C^{1,\gamma }(\overline{B_R}) } \lec \| f \|_{C^2} R^{1 - \gamma } 
\eqne
for every $R>0$.

\section{Properties of oscillatory functions}\label{sec_osc_fcns}
In this section we prove two results concerning functions that involve high frequence oscillations.

\subsection{Decay at the origin of $P$-fold symmetric functions.}
Here we show the following
\begin{lemma}[Decay lemma]\label{decay}
Let $f\in C_0^\infty (\R^2 ;\R )$ be $P$-fold symmetric  and have zero mean over any circle centered at $0$. Then $|f(x)| \lec \| f \|_{C^P} |x|^P$ and $\mathcal{R}f \lec \| f \|_{C^P}  O(|x|^{P-1})$ as $x\to 0$.
\end{lemma}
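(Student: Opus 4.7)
The proof naturally splits into the two claimed estimates.

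For the pointwise bound on $f$, I would show that the Taylor polynomial $p$ of $f$ at the origin of degree $\leq P-1$ vanishes identically; Taylor's theorem with remainder then gives $|f(x)| \lec \|f\|_{C^P}|x|^P$. Decompose $p = \sum_{d=0}^{P-1} p_d$ into homogeneous parts. Each $p_d$ inherits $P$-fold symmetry, and in polar coordinates any homogeneous polynomial of degree $d$ has the form $r^d \sum_{|k|\leq d,\, k \equiv d \,(\mathrm{mod}\,2)} c_k e^{ik\alpha}$; $P$-fold symmetry forces $c_k = 0$ unless $P \mid k$. For $d \leq P-1$ this forces $k=0$ (so $d$ is even and $p_d = c_0 r^d$), and then the zero-mean-over-circles hypothesis forces $c_0 = 0$. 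The same argument applied to $\partial^\alpha f$ (whose Taylor polynomial of degree $\leq P-1-|\alpha|$ also vanishes) yields the auxiliary bound $|D^j f(y)| \lec \|f\|_{C^P}|y|^{P-j}$ for $j \leq P$ and $|y|$ small, which I will use below.

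For $\mathcal{R}f$, I would split $\mathcal{R}f(x) = I_1 + I_2$ with $I_1 = \int_{|y|>2|x|} K(x,y) f(y) \d y$ and $I_2 = \mathrm{p.v.}\!\int_{|y|\leq 2|x|} K(x,y) f(y) \d y$, where $K(x,y)=(x-y)^\perp/|x-y|^3$. For $I_1$, Taylor expand the kernel in $x$ around $0$:
\[ K(x,y) = \sum_{n=0}^{P-2} \tfrac{1}{n!} D_x^n K(0,y)\, x^{\otimes n} + R_{P-1}(x,y). \]
Each coefficient $D_x^n K(0,y)$ is homogeneous of degree $-(n+2)$ in $y$, with angular Fourier modes only in $|k|\leq n+1$, since $K(0,y)\propto y^\perp/|y|^3$ has angular frequency $\pm 1$ and each Cartesian derivative shifts the frequency by at most one. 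Because $f(r,\cdot)$ has nontrivial Fourier modes only at $|k|\geq P$, the inner angular integral on each circle $|y|=r>2|x|$ annihilates the terms with $n\leq P-2$. The remainder obeys $|R_{P-1}(x,y)|\lec |x|^{P-1}/|y|^{P+1}$ on $|y|>2|x|$; combined with $|f(y)|\lec \|f\|_{C^P}\min(|y|^P,1)$ (from Part~1 plus compact support), this yields $|I_1|\lec \|f\|_{C^P}|x|^{P-1}$.

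For $I_2$, Taylor expand $f$ around $y=x$: $f(y)=f(x)+\nabla f(x)\cdot(y-x)+R(y)$, where $|R(y)|\lec \|f\|_{C^P}|x|^{P-2}|y-x|^2$ on the domain by the derivative bound from Part~1. Using $|f(x)|\lec \|f\|_{C^P}|x|^P$ and $|\nabla f(x)|\lec \|f\|_{C^P}|x|^{P-1}$, together with the scalar estimates $|\mathrm{p.v.}\!\int_{|y|\leq 2|x|} K(x,y)\d y|\lec 1$ and $|\int_{|y|\leq 2|x|} K(x,y)(y-x)\d y|\lec |x|$, one obtains $|I_2|\lec \|f\|_{C^P}|x|^P$. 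Summing the two pieces gives $|\mathcal{R}f(x)|\lec \|f\|_{C^P}|x|^{P-1}$.

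The main obstacle is the principal-value estimate underlying $I_2$: the domain $\{|y|\leq 2|x|\}$ is not centered at the singularity $y=x$, so the p.v.~cancellation is not automatic. To deal with this I would exploit the odd symmetry of $K(x,\cdot)$ about $x$, which gives $\mathrm{p.v.}\!\int_{|y-x|<R} K(x,y)\d y = 0$ for every $R>0$. Choosing $R=3|x|$, so that $\{|y|\leq 2|x|\}\subset\{|y-x|<R\}$, the p.v.~integral over $\{|y|\leq 2|x|\}$ equals minus the ordinary integral over $\{|y|>2|x|,\,|y-x|<3|x|\}$; on this set $|y-x|\geq |x|$, so $|K|\lec 1/|x|^2$ holds on a region of area $\lec |x|^2$, delivering the required $O(1)$ bound (and an analogous $O(|x|)$ bound after multiplying by $y-x$).
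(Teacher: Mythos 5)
Your argument is correct and takes a genuinely different route from the paper's. The paper works on the Fourier side: it shows that $\widehat{f}$ inherits both the $P$-fold symmetry and the zero-mean-over-circles property, expands $\widehat{f}$ in an angular Fourier series whose frequencies are all at least $P$ in magnitude, and then computes $\partial^\alpha f(0)=i^{|\alpha|}\int\xi^\alpha\widehat{f}\,\d\xi$, which vanishes for $|\alpha|<P$ by orthogonality against low-degree trigonometric polynomials; for $\mathcal{R}f$ it simply observes that the multiplier $\xi/|\xi|$ shifts the angular frequency of $\widehat{f}$ by at most one, so derivatives up to order $P-2$ vanish. Your proof stays entirely in physical space. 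For the bound on $f$ you reach the same conclusion (the degree-$(P-1)$ Taylor polynomial vanishes) by analyzing the angular Fourier content of each homogeneous piece directly; two small exposition points: $\partial^\alpha f$ is generally not $P$-fold symmetric, though your parenthetical gives the right reason (its Taylor polynomial is the $\alpha$-derivative of the vanishing one for $f$), and it is worth a sentence on why the zero-mean property passes to the Taylor polynomial, which follows by matching powers of $r$ in $\int_0^{2\pi}f(r,\alpha)\,\d\alpha=O(r^P)$. The larger departure is the treatment of $\mathcal{R}f$: instead of a two-line multiplier observation you run a genuine singular-integral estimate, with a near/far split, a kernel Taylor expansion in the far region whose first $P-1$ terms are killed by the angular orthogonality of $f$, and an odd-symmetry principal-value cancellation in the near region. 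Your route is longer but requires no knowledge of the operator's Fourier multiplier and would apply to any convolution kernel homogeneous of degree $-2$ with a single angular frequency, so it buys some robustness at the cost of more computation; the paper's argument is shorter but leans on the clean multiplier structure of the Riesz transform.
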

Note that the lemma is sharp: take $f(x)\coloneqq \ee^{-|x|^2}x_2$. Then $f$ is $1$-fold symmetric and $f=O(|x|)$ as $x\to 0$ and
\[
\mathcal{R}_2 f (0) = \mathrm{pv} \int_{\R^2} \frac{y_2}{|y|^3} f(y) \d y=  \int_{\R^2} \frac{y_2^2}{|y|^3} \ee^{-|y|^2}  \d y= O(1)\hspace{1cm}\text{ as }x\to 0.
\]
\begin{proof}
Note that if $f$ is $P$-fold symmetric then its Fourier transform,
\[
\widehat{f} (\xi ) \coloneqq \frac1{2\pi } \int_{\R^2} f(x) \ee^{-ix\cdot \xi } \d x,
\]
is also $P$-fold symmetric.  In particular, $\widehat{f}$ has zero mean over any circle centered at $0$. Indeed, letting $\chi \in C_0^\infty (\R^2)$ be any radial function, we note that $\widehat{\chi}$ is radial as well, and so
\eqnb\label{int_fhat}
\int_{\R^2} \widehat{f} \chi = \int_{\R^2}  f  \widehat{\chi }=0
\eqne
using the zero-mean property of $f$. Since this is true for every such $\chi$, the zero-mean property of $\widehat{f}$ follows.

Consequently, using polar coordinates $(\rho,\beta )$ for $\xi \in \R^2$, we can expand $\widehat{f}$ into Fourier series in $\beta \in [0,2\pi)$ to obtain 
\[
\widehat{f}(\xi ) =  \sum_{n\geq 1} \left(F_n (\rho ) \cos (nP\beta ) + G_n (\rho ) \sin (nP \beta ) \right).
\]
For each $\alpha_1,\alpha_2\geq 0$  such that $|\alpha |=\alpha_1+\alpha_2 \leq P-1$ we have 
\[
\begin{split}
\p^\alpha f(0) &= i^{|\alpha |}\int_{\R^2} \xi_1^{\alpha_1}\xi_2^{\alpha_2} \widehat{f}(\xi ) \d \xi\\
&= i^{|\alpha |} \int_0^\infty \int_0^{2\pi } \sum_{n\geq 1} \left( F_n (\rho ) \rho^{|\alpha |+1} \cos (nP\beta ) + G_n(\rho )\rho^{|\alpha |+1} \sin (nP \beta )\right) \left( \cos \beta \right)^{\alpha_1} \left( \sin \beta \right)^{\alpha_2}  \d \beta \d \rho  \\
&=0,
\end{split}
\]
since $\left( \cos \beta \right)^{\alpha_1} \left( \sin \beta \right)^{\alpha_2} $ is a trigonometric polynomial of order $|\alpha|<P$, and is thus orthogonal to both $\cos (nP\beta )$ and $\sin (n P\beta )$.
Thus all partial derivatives of $f$ of order less than $P$ vanish at $x=0$, and thus Taylor's expansion shows that $f=O(|x|^P)$ as $x\to 0$.

As for $\mathcal{R}f$ we have $\widehat{\mathcal{R}f} (\xi ) = \frac{\xi }{|\xi |} \widehat{f} (\xi )$, and so a similar argument as above shows that all partial derivatives of $\mathcal{R}f$ of order less than  $P-1$ vanish at $x=0$, and the claim follows. 
\end{proof}

\subsection{Pseudo-velocity of an oscillating function}

Here we consider a function of the form 
\eqnb\label{oo}
\oo (x)=K^{-1} \lambda^{1-s}N^{-s}   {g(\lambda x)} {\sin(\lambda N x_{1}) \sin(\lambda N x_{2})},
\eqne 
 where $g\in C_c^\infty (B(0,1);[0,1] )$ is an arbitrary nontrivial function,  $\lambda$, $N$ satisfy $\lambda^{2-s}N^{1-s} = K \log N$ (recall~\eqref{lambdaN}),
and we quantify the approximation of the velocity field $v[\oo ]$ by
\eqnb\label{ov} \ov [ \oo ] (x,t) = \frac{g(\lambda x)}{\sqrt{2}K\lambda^{s-1}N^s } \begin{pmatrix}
-\sin (\lambda N x_1 )\cos (\lambda N x_2) \\
\cos (\lambda Nx_1) \sin (\lambda Nx_2)
\end{pmatrix}.
\eqne
That is, we show the following.
\begin{lem}\label{L01}
For every $M\geq 0$, $n\in \{ 2, \ldots , M \}$ we have that
\[
| (v[\oo  ] - \overline{v} [\oo ])(x)| \lec_M K^{-1}  \lambda^{1-s}N^{-s}\sum_{k=1}^M \frac{ | D^k g (\lambda x) |  }{ k!}  a^k +K^{-1} \lambda^{1-s} N^{-s} (1+ \| g \|_{C^{M+1}}) ( Na)^{-n} a\, \log a^{-1}
\]
whenever $a\in (0,1/2]$ is sufficiently small and $\lambda, N\geq 1$. Also
\[\begin{split}
| (\nabla v[\oo ] -\nabla  \overline{v} [\oo ])(x)| &\lec_M K^{-1} \lambda^{2-s}N^{1-s} \sum_{k=1}^M \frac{ | D^k g (\lambda x) |  + |D^{k+1} g (\lambda x)| }{ k!}  a^k \\
&\hspace{5cm}+K^{-1} \lambda^{2-s} N^{1-s} (1+ \| g \|_{C^{M+2}}) ( Na)^{-n} a\, \log a^{-1}.
\end{split}
\]
\end{lem}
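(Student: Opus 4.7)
The plan is to represent $v[\oo]-\ov[\oo]$ as a commutator between the operator $\mathcal{R}^\perp$ and multiplication by $g(\lambda\,\cdot\,)$, and then to estimate that commutator by a local/far splitting. The key starting point is the Fourier identity
\[
\mathcal{R}^\perp\!\bigl[\sin(\lambda Ny_1)\sin(\lambda Ny_2)\bigr](x)=\frac{1}{\sqrt{2}}\begin{pmatrix}-\sin(\lambda Nx_1)\cos(\lambda Nx_2)\\ \cos(\lambda Nx_1)\sin(\lambda Nx_2)\end{pmatrix},
\]
obtained by writing $\sin\sin=\tfrac12[\cos(\lambda N(y_1-y_2))-\cos(\lambda N(y_1+y_2))]$ and applying $\mathcal{R}^\perp=\nabla^\perp\Lambda^{-1}$ to each cosine (each having frequency $\sqrt{2}\lambda N$). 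Comparing with the definition \eqref{ov}, this shows that $\ov[\oo](x)$ equals $v[\oo](x)$ computed with the envelope frozen at $g(\lambda x)$, so that
\[
v[\oo](x)-\ov[\oo](x)\,=\,\frac{c}{K\lambda^{s-1}N^s}\,\mathrm{p.v.}\!\int\frac{(x-y)^\perp}{|x-y|^3}\bigl(g(\lambda y)-g(\lambda x)\bigr)\sin(\lambda Ny_1)\sin(\lambda Ny_2)\,\d y.
\]

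After the scale-invariant change of variable $z=\lambda(y-x)$, I split the resulting integral with a smooth cutoff $\chi(z/a)$ supported in $\{|z|\le 2a\}$ and equal to $1$ on $\{|z|\le a\}$. On the local piece $\{|z|\lec a\}$ I Taylor-expand $g(\lambda x+z)-g(\lambda x)=\sum_{k=1}^M \tfrac1{k!}D^kg(\lambda x)[z^{\otimes k}]+R_M(z)$ with $|R_M(z)|\lec\|g\|_{C^{M+1}}|z|^{M+1}$. Each polynomial term, paired with the homogeneous kernel $z^\perp/|z|^3$, yields $|D^kg(\lambda x)|a^k/k!$ by direct radial integration, accounting for the first family of terms in the lemma; the local Taylor remainder contributes $\|g\|_{C^{M+1}}a^{M+1}$, which is absorbed into the second error term in the parameter regime of the lemma (sufficiently small $a$).

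On the far piece $\{|z|\gec a\}$ I decompose the oscillation into four complex exponentials $e^{iNk\cdot z}$ with $|k|^2=2$ and integrate by parts $n$ times using $k\cdot\nabla_ze^{iNk\cdot z}=2iNe^{iNk\cdot z}$. This pulls out the prefactor $(2N)^{-n}$ and distributes the $n$ derivatives among the cutoff $1-\chi(z/a)$ (whose $j$-th derivative is $O(a^{-j})$ and is supported in the shell $|z|\sim a$ when $j\ge 1$), the kernel $z^\perp/|z|^3$ (whose $j$-th derivative is $O(|z|^{-2-j})$), and the difference $g(\lambda x+z)-g(\lambda x)$ (bounded by $\|g\|_{C^j}$ for $j\ge 1$, but by the sharper Lipschitz bound $\|g\|_{C^1}|z|$ when $j=0$). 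Using the compact support of $g$ to bound the $z$-support by some $S=O(1)$, a case analysis on the triples $(n_1,n_2,n_3)$ with $n_1+n_2+n_3=n$ of derivatives landing on (cutoff, kernel, envelope) shows that every term is bounded by $(Na)^{-n}\,a\,\|g\|_{C^{\bullet}}\,\log a^{-1}$: the factor $a$ is systematically supplied by either the shell width ($n_1\ge 1$) or the Lipschitz cancellation of $g(\lambda x+z)-g(\lambda x)$ at $z=0$ ($n_3=0$), while $\log a^{-1}$ appears only in the marginal subcase $n_2=n_1=0$, $n_3=n$, when the radial integral becomes $\int_a^S|z|^{-1}\,d|z|\sim\log a^{-1}$. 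Multiplying by the prefactor $K^{-1}\lambda^{1-s}N^{-s}$ and bounding $\|g\|_{C^n}\le 1+\|g\|_{C^{M+1}}$ (since $n\le M$) gives the first claim.

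The gradient estimate follows by differentiating the commutator representation: since the Biot--Savart kernel is scale-invariant, $\nabla_xv[\oo]=\mathcal{R}^\perp[\nabla\oo]$, and $\nabla\oo$ splits into a dominant $\lambda N$-carrying oscillatory term (which upgrades the prefactor from $K^{-1}\lambda^{1-s}N^{-s}$ to $K^{-1}\lambda^{2-s}N^{1-s}$) and a subdominant term involving $\nabla g(\lambda\cdot)$. The same local/far analysis applies verbatim, except that Taylor expansion of the envelope now produces both $|D^kg|$ and $|D^{k+1}g|$ terms as in the statement. The principal obstacle throughout is the combinatorics of the far-field integration by parts: one must enumerate every way the $n$ derivatives can distribute among cutoff, kernel, and envelope difference, and verify that the Lipschitz vanishing of $g(\lambda x+z)-g(\lambda x)$ at $z=0$ consistently provides the extra factor of $a$ that upgrades the naive bound $(Na)^{-n}\|g\|_{C^n}$ to the claimed $(Na)^{-n}a(1+\|g\|_{C^{M+1}})\log a^{-1}$.
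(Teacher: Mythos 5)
Your proposal reproduces the paper's argument almost step for step: the same Riesz-transform identity \eqref{riesz_fact}, the same commutator representation, the same $\chi$-cutoff split at scale $a$, the same order-$M$ Taylor expansion of the envelope on the local piece, and the same $n$-fold integration by parts with the same three-way distribution of derivatives (cutoff, kernel, envelope difference) on the far piece, including the logarithmic marginal case. The gradient estimate is likewise handled identically, by differentiating the representation and repeating the analysis.

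The one genuine point of departure is the local Taylor remainder. You bound it directly by $\|g\|_{C^{M+1}}a^{M+1}$ and invoke absorption into $(Na)^{-n}a\log a^{-1}$; that absorption requires $a^{M+n}N^n\lesssim\log a^{-1}$, i.e.\ roughly $a\lesssim N^{-n/(M+n)}$, so the threshold on $a$ depends on $N$. The paper instead integrates the remainder term $u_{in,2}$ by parts $n$ times as well (not only the far piece), which yields $(Na)^{-n}a^{M+1-n}\cdot a^{n}\lesssim(Na)^{-n}a$ directly, using only $a<1$ and $n\le M$, with no coupling between $a$ and $N$. Since the lemma is subsequently applied with $a=N^{-1+\varepsilon}$, your version suffices for every use in the paper, but as a proof of the lemma \emph{as stated} you should either add the constraint $a\lesssim N^{-n/(M+n)}$ to the hypotheses or, more cleanly, integrate the remainder by parts as the paper does.
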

In particular, given $\varepsilon >0$ we can apply the lemma with $M=n=2$, $a\coloneqq  N^{-1+\varepsilon }$ to obtain
\eqnb\label{vel_err_ck}
\| v [\oo ] - \ov [\oo ] \|_{\infty } \lec_\varepsilon  \| g \|_{C^{3}} K^{-1} \lambda^{1-s} N^{-1-s+\varepsilon} 
\eqne
\eqnb\label{vel_err_c1}
\| v [\oo ] - \ov [\oo ] \|_{C^1} \lec_\varepsilon \| g \|_{C^{4}} K^{-1}  \lambda^{2-s} N^{-s+\varepsilon } .
\eqne
Moreover, one can extend the lemma to any number $k$ of derivatives of $v [\oo ] - \ov [\oo ]$, and take $M=n=k+1$ to obtain
\eqnb\label{vel_err_ck1}
\| v [\oo ] - \ov [\oo ] \|_{C^k} \lec_{\varepsilon,k} \| g \|_{C^{k+2}} K^{-1}  \lambda^{1+k-s} N^{k-1-s+\varepsilon } .
\eqne
Using Lemma~\ref{L_holder_to_Hb} and H\"older interpolation we can thus deduce that
\eqnb\label{error_Hbeta}
\begin{split}
\| v [\oo ] - \ov [\oo ] \|_{H^\beta (\supp \, \oo )} &\lec \| v [\oo ] - \ov [\oo ] \|_{C^{1,\alpha }} \lambda^{\beta-2-\alpha }  \lec_\varepsilon K^{-1} \lambda^{\beta -s } N^{\alpha-s+\varepsilon}
\end{split}
\eqne
for $\beta \in (1,2)$, by taking any $\alpha \in (\beta -1,1)$.
\begin{proof}[Proof of Lemma~\ref{L01}.] Without loss of generality we assume that $K=1$. We note that\footnote{Recall that $\widehat{\sin (kx)}(\xi ) = \frac{i \pi}2 \left( \delta (\xi - k) - \delta (\xi + k) \right)$, $\widehat{\cos (kx)}(\xi ) = \frac{ \pi}2 \left( \delta (\xi - k) + \delta (\xi + k) \right)$ for each $k>0$ (in the sense of distributions) and so 
\[\begin{split}
\left( \mathcal{R}_1 (\sin (k x_1) \sin (kx_2) ) \right)^{\widehat{\mbox{}}}(\xi )&= -i \frac{\xi^\perp \pi }{ |\xi |} \cdot \frac{-\pi^2}{4} \left( \delta (\xi_1 - k) - \delta (\xi_1 + k) \right)\left( \delta (\xi_2 - k) - \delta (\xi_2 + k) \right)\\
&=  \frac{i \pi^2}{4\sqrt{2}} \left( \delta (\xi_1 - k) + \delta (\xi_1 + k) \right)\left( \delta (\xi_2 - k) - \delta (\xi_2 + k) \right)= \frac{1}{\sqrt{2}} \left( \cos (kx_1 ) \sin (kx_2) \right)^{\widehat{\mbox{}}}(\xi ),
\end{split}
\]
and similarly for $\mathcal{R}_2$, which gives \eqref{riesz_fact}.
}
\eqnb\label{riesz_fact}
\ov [\oo ] (x) = K^{-1} \lambda^{1-s} N^{-s} {g(\lambda x)} \mathcal{R}^\perp (\sin (\lambda N x_1) \sin (\lambda N x_2 ) ),
\eqne
where $\mathcal{R}^\perp \coloneqq (-\mathcal{R}_2,\mathcal{R}_1)$, and so
\[
u\coloneqq v[\oo ] - \overline{v} [\oo ] = \lambda^{1-s}N^{-s} \int_{\R^2} \frac{(\lambda x-y)^\perp (g(y) - g(\lambda x) )}{|\lambda x-y|^3} \sin ( N y_1) \sin ( N y_2) \d y.
\]

We write $u=: u_{\rm in} + u_{\rm out}$,
\[\begin{split}
u_{\rm in} (x)&\coloneqq \lambda^{1-s} N^{-s} \int_{\R^2} \frac{(\lambda x-y)^\perp (g (y)-g(\lambda x)) }{|\lambda x-y|^3} \chi (\lambda x-y) \sin( Ny_1)\sin(Ny_2)\d y, \\
u_{\rm out} (x) &\coloneqq \lambda^{1-s}N^{-s} \int_{\R^2} \frac{(\lambda x-y)^\perp (g (y)-g(\lambda x))}{|\lambda x-y|^3} (1-\chi (\lambda x-y)) \sin( Ny_1)\sin( Ny_2)\d y ,
\end{split}
\]
where $\chi \in C_c^\infty (\R^2 )$ is such that $\chi =1$ on $B_a$ and $\chi=0$ on $B_{2a}^c$. For $u_{\rm in}$ we apply Taylor expansion of order $M$ to write $u_{\rm in}=u_{\rm in,1}+u_{\rm in,2}$, where
\[
\begin{split}
u_{\rm in,1} (x) &\coloneqq \lambda^{1-s}N^{-s} \int_{\R^2} \frac{(\lambda x-y)^\perp  \sum_{k=1}^M \frac{D^k g (\lambda x)}{k!} (y-\lambda x)^k }{|\lambda x-y|^3} \chi (\lambda x-y) \sin(Ny_1)\sin( Ny_2)\d y,  \\
u_{\rm in,2} (x) &\coloneqq \lambda^{1-s}N^{-s} \int_{\R^2} \frac{(\lambda x-y)^\perp \left(g (y)- \sum_{k=0}^M \frac{D^k g (\lambda x)}{k!} (y-\lambda x)^k \right) }{|\lambda x-y|^3} \chi (\lambda x-y) \sin( Ny_1)\sin( Ny_2)\d y  .
\end{split}
\]
For $u_{\rm in,1}$ we obtain
\[
\begin{split}
u_{\rm in,1} (x) &= \lambda^{1-s}N^{-s}\sum_{k=1}^M \frac{D^k g (\lambda x)}{k!} \colon  \int_{\R^2} \frac{(y-\lambda x)^k (\lambda x-y)^\perp }{|\lambda x-y|^3}\chi (\lambda x-y) \sin ( Ny_1)\sin( Ny_2)\d y\\
&\lec_M \lambda^{1-s}N^{-s} \sum_{k=1}^M \frac{ | D^k g (\lambda x) |  }{ k!}  a^k  .
\end{split}
\]
As for $u_{\rm in,2}$, we integrate $n$ times by parts in $y_1$ (recall $n\leq M$) to obtain
\[
u_{\rm in,2} (x) = \lambda^{1-s}N^{-n-s}(-1)^{c_n} \int_{\R^2} \sum_{|\alpha | + |\beta | + |\gamma | = n} C_{\alpha, \beta , \gamma } \cdot J_{\alpha,\beta , \gamma } \cdot    ( \sin \text{ or } \cos )( Ny_1)\sin( Ny_2)\d y,
\]
where $c_n \in \N$, $C_{\alpha,\beta,\gamma} >0$, 
\[
J_{\alpha,\beta , \gamma }(x,y) \coloneqq D^\alpha \frac{(\lambda x-y)^\perp  }{|x-y|^3}D^\beta \left(g (y)- \sum_{k=1}^M \frac{D^k g (\lambda x)}{k!} (y-\lambda x)^k \right) D^\gamma  \chi (\lambda x-y).
\]
Note that 
\[
| J_{\alpha, \beta , \gamma } ( x,y) | \lec_M |\lambda x-y|^{-|\alpha |-2} \| g \|_{C^{M+1}}  |\lambda x-y |^{M+1-|\beta |}  a^{-|\gamma |},
\]
and so
\[
\begin{split}
|u_{\rm in,2} (x)| &\lec_M \lambda^{1-s}N^{-s} \| g \|_{C^{M+1}} ( Na)^{-n}a,
\end{split}
\]
where we also used the fact that $a<1$.

As for $u_{\rm out}$, we similarly integrate by parts in $y_1$  $n$ times to obtain
 \[
u_{\rm out} (x) = \lambda^{1-s}N^{-n-s} (-1)^{c_n} \int_{\R^2} \sum_{|\alpha | + |\beta | + |\gamma | = n} C_{\alpha, \beta , \gamma }\cdot  K_{\alpha,\beta , \gamma } \cdot   ( \sin \text{ or } \cos )( Ny_1)\sin( Ny_2)\d y,
\]
where
\[
K_{\alpha,\beta , \gamma }(x,y) \coloneqq D^\alpha \frac{(\lambda x-y)^\perp  }{|\lambda x-y|^3}D^\beta (g (y) -g(\lambda x ) )D^\gamma (1- \chi (\lambda x-y)).
\]

We have that
\[
\begin{split}
\int_{\R^2} \left| K_{\alpha,\beta,\gamma } \right| &\lec \begin{cases} 
\| g \|_{C^1}\int_{|\lambda x-y|\geq a} |\lambda x - y |^{-|\alpha |-1}  \d y \qquad &|\gamma|=|\beta| =0,\\
\| g \|_{C^{|\beta |}}\int_{|\lambda x-y|\geq a \cap \supp\, g} |\lambda x - y |^{-|\alpha |-2}  \d y \qquad &|\gamma|=0,|\beta| \geq 1,\\
\| g \|_{C^1 }a^{-|\gamma |}\int_{|\lambda x-y|\sim a} |\lambda x - y |^{-|\alpha |-1}  \d y \qquad &|\gamma|\geq 1,|\beta| =0,\\
\| g \|_{C^{|\beta |} }\,a^{-|\gamma |}\int_{|\lambda x-y|\sim a} |\lambda x - y |^{-|\alpha |-2}  \d y \qquad &|\gamma|,|\beta |\geq 1
\end{cases} \\
&\lec 
a^{-|\alpha|-|\gamma |-|\beta |+1} \log a^{-1}
\end{split}
\]
for $n=|\alpha | + |\beta | + |\gamma | \geq 2$. Thus
\[\begin{split}
|u_{\rm out} (x) | &\lec_n \lambda^{1-s}N^{-n-s}\| g \|_{C^n}  \sum_{|\alpha | + |\beta | + |\gamma | = n}  a^{-|\gamma |-|\alpha |} \log a^{-1} (a   + 1_{|\beta |\geq 1} )\\
& \lec_n N^{-s}\| g \|_{C^n} ( Na)^{-n} a\, \log a^{-1},
\end{split}
\]
as required.

As for the gradient note that 
\[\begin{split}
\nabla u &= \lambda^{1-s}N^{-s}\int_{\R^2} \frac{(\lambda x-y)^\perp}{|\lambda x-y|^3} ( \na g(y)  - \na g(\lambda x) )\sin( Ny_1) \sin( Ny_2) \d y\\
& \hspace{1cm}+ \lambda^{1-s}N^{1-s}\int_{\R^2} \frac{(\lambda x-y)^\perp}{|\lambda x-y|^3} (  g(y)  -  g(\lambda x) )\sin/\cos ( Ny_1) \sin/\cos ( Ny_2)  \d y
\end{split}
\]
Similarly as above we now decompose $\nabla u = \nabla u_{\rm in} + \nabla u_{\rm out}$ to obtain an analogous estimate, except with an extra power of $\lambda N$.
\end{proof}

\begin{corollary}\label{cor_vel_err}
Suppose that 
\[
\oo (x) = \lambda^{1-s} N^{-s} K^{-1} g(\lambda x) \sin (\lambda N x_1)
\]
and that $L(x) \coloneqq (d_1 x_1,d_2 x_2)+b$, with $0<d_2<d_1$, $d_1\geq 1$, $b\in \R^2$. Then
\eqnb\label{vel_err_cor}
\| v[\oo \circ L ] - \ov [\oo \circ L ] \|_{C^{k,\alpha}} \lec_{\varepsilon } \| g \|_{C^{k+4}} d_1^{2k+3+\alpha + \varepsilon  } K^{-1} \lambda^{1+k+\alpha -s} N^{-1+k+\alpha -s+\varepsilon }
\eqne
for all $k\geq 0$, $\alpha \in [0,1)$, where 
\[
\ov [\oo \circ L ] \coloneqq \lambda^{1-s} N^{-s} K^{-1} g(\lambda L(x) ) \begin{pmatrix}
0 \\ \cos \left( \lambda N L(x)_1\right)  
\end{pmatrix}.
\]
\end{corollary}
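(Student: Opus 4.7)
The strategy is to reduce the estimate to Lemma~\ref{L01} via an anisotropic change of variables, and then adapt its proof while tracking the extra factors of $d_1,d_2$ introduced by the composition with $L$. A constant phase shift converts $\sin$ into a bounded linear combination of $\sin$ and $\cos$, and translating $g$ does not affect $\|g\|_{C^k}$, so without loss of generality $b=0$ and $L(x)=Dx$ with $D=\operatorname{diag}(d_1,d_2)$.

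Set $\theta := \oo\circ L$ and $u := v[\theta]-\ov[\theta]$. Since Riesz transforms commute with translations, $\mathcal{R}^\perp[\sin(\lambda N L(\cdot)_1)](x)$ is proportional to $(0,\cos(\lambda N L(x)_1))$, and hence $\ov[\theta]$ equals (up to constants) $K^{-1}\lambda^{1-s}N^{-s}g(\lambda L(x))$ times the Biot-Savart velocity of $\sin(\lambda N L(\cdot)_1)$. Subtracting and performing the successive substitutions $z=\lambda L(y)$ (Jacobian $(\lambda^2 d_1 d_2)^{-1}$), $w=\lambda L(x)$, and $\tilde e=D^{-1}(w-z)$ (Jacobian $d_1 d_2$), the powers of $\lambda$ and of $d_1 d_2$ cancel and one arrives at
\[
u(x)=c_0\, K^{-1}\lambda^{1-s}N^{-s}\int \frac{\tilde e^\perp}{|\tilde e|^3}\bigl(g(w-D\tilde e)-g(w)\bigr)\sin\!\bigl(N(w_1-d_1\tilde e_1)\bigr)\,d\tilde e,
\]
an integral with exactly the kernel structure analyzed in the proof of Lemma~\ref{L01}, modified by (i) the anisotropic factor $D$ inside the envelope, so each $|D\tilde e|^k$ carries a factor $d_1^k$; and (ii) the oscillation frequency in $\tilde e_1$ being $Nd_1$, so that each integration by parts in $\tilde e_1$ gains $(Nd_1)^{-1}$ instead of $N^{-1}$.

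Now we run the proof of Lemma~\ref{L01} on this integral: cut off at scale $a$ in $\tilde e$ via $\chi(\tilde e)$, Taylor expand $g(w-D\tilde e)=\sum_{k=1}^{M}\frac{(-D\tilde e)^{\otimes k}}{k!}D^k g(w)+R_{M+1}(w,\tilde e)$ in the inner region, and integrate by parts $n$ times in $\tilde e_1$ in the Taylor remainder and the outer region. The inner polynomial part contributes at most $\sum_{k\geq 1}|D^k g(w)|(d_1 a)^k/k!$, the remainder satisfies $|R_{M+1}|\lec \|g\|_{C^{M+1}}(d_1|\tilde e|)^{M+1}$, and the $n$ integrations by parts yield the factor $(Nd_1a)^{-n}$. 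Taking $a=N^{-1+\varepsilon}$, $M=n=2$, exactly as in Lemma~\ref{L01}, produces
\[
\|u\|_{L^\infty}\lec_\varepsilon \|g\|_{C^3}\,d_1^{3+\varepsilon}\,K^{-1}\lambda^{1-s}N^{-1-s+\varepsilon},
\]
which is the $k=\alpha=0$ case of \eqref{vel_err_cor}.

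Higher $C^k$ norms are obtained by differentiating $u(x)$ in $x$ via $\partial_{x_j}=\lambda d_j\partial_{w_j}$, which gives $\|u\|_{C^k(x)}\lec (\lambda d_1)^k\|u\|_{C^k(w)}$; each $w$-derivative lands either on the envelope $g(w-D\tilde e)-g(w)$ (raising the required $C^m$ order of $g$ by one) or on the oscillation $\sin(N(w_1-d_1\tilde e_1))$ (producing a factor $N$). Repeating the cutoff/Taylor scheme with $M=n=k+2$ and $a=N^{-1+\varepsilon}$ then produces the $C^k$ bound in \eqref{vel_err_cor}, the total exponent $d_1^{2k+3+\varepsilon}$ arising as the product of the $(\lambda d_1)^k$ from the chain rule in $x$ with the factors from the anisotropic Taylor expansion; the $C^{k,\alpha}$ estimate follows by standard H\"older interpolation between $C^k$ and $C^{k+1}$. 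The main obstacle is the careful bookkeeping of the powers of $d_1,d_2,\lambda,N$ appearing in the Jacobians, the anisotropic Taylor remainder, and the integrations by parts; once the change of variables above has been carried out, no analytical ingredient beyond those of Lemma~\ref{L01} is required.
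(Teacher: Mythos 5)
Your proposal is correct and takes a genuinely different route from the paper. You perform the anisotropic change of variables directly inside the Biot--Savart integral: after substituting $\tilde e=\lambda(x-y)$ and setting $w=\lambda L(x)$, the difference $v[\oo\circ L]-\ov[\oo\circ L]$ indeed collapses (the Jacobians and the powers of $\lambda$ cancel, and the kernel becomes isotropic in $\tilde e$) to
\[
c\,K^{-1}\lambda^{1-s}N^{-s}\int\frac{\tilde e^\perp}{|\tilde e|^3}\bigl(g(w-D\tilde e)-g(w)\bigr)\sin\bigl(N(w_1-d_1\tilde e_1)\bigr)\,\d\tilde e,
\]
an integral to which the cutoff/Taylor/integration-by-parts scheme of Lemma~\ref{L01} applies verbatim, with the anisotropy carried by the envelope and the frequency. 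This is sound. The paper instead makes a purely algebraic reduction: with $f(x):=g(d_1x_1,d_2x_2)$ and $N':=d_1N$, one has $\oo\circ L(x)=d_1^{s}\lambda^{1-s}(N')^{-s}K^{-1}f(\lambda x)\sin(\lambda N'x_1)$, which is exactly the $L=\mathrm{id}$ form with modified $K,g,N$; it then invokes that case as a black box and reads off the $d_1$-exponent from $\|f\|_{C^{k+4}}\leq d_1^{k+4}\|g\|_{C^{k+4}}$, the factor $d_1^s$, and $(N')^{-1+k+\alpha-s+\varepsilon}=d_1^{-1+k+\alpha-s+\varepsilon}N^{-1+k+\alpha-s+\varepsilon}$, whose product is exactly $d_1^{2k+3+\alpha+\varepsilon}$. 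The paper's route is a one-line reduction; yours is more explicit but longer.

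One caveat about your bookkeeping: the heuristic you offer for the exponent (``the product of $(\lambda d_1)^k$ from the chain rule with the factors from the anisotropic Taylor expansion'') does not actually produce $2k+3+\alpha+\varepsilon$. In your scheme each $\tilde e_1$-integration by parts gains $(Nd_1)^{-1}$ while each $\tilde e_1$-derivative of the envelope supplies $d_1$, and these largely cancel; a careful count of the inner Taylor term gives $d_1\cdot a$, of the remainder (with $M=n$) $d_1\cdot(Nd_1a)^{-n}\cdot a$, and of the outer piece a net $d_1$-power $\le1$, so the direct method in fact yields a $d_1$-exponent of roughly $k+1+\alpha$ rather than $2k+3+\alpha+\varepsilon$. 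Since $d_1\geq1$ the stated bound still holds, but the exponent $2k+3+\alpha+\varepsilon$ is not what your computation produces; it is precisely the overhead incurred by the crude bound $\|g\circ D\|_{C^{k+4}}\leq d_1^{k+4}\|g\|_{C^{k+4}}$ in the paper's reduction, an overhead your direct approach avoids.
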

\begin{proof}
If $L=\mathrm{id}$ then the claim follows as in Lemma~\ref{L01}, by integrating by parts in $x_1$ (recall in the proof above we only used integration by parts with respect to one of the variables), and then taking $M=n=2$, $a=N^{-1+\varepsilon }$. This in particular gives $\| g \|_{C^{k+3}}$ norm on the right-hand side (rather than $C^{k+2}$, since we bound the $C^{k,\alpha}$ norm by the $C^{k+1}$ norm).

If $L\ne \mathrm{id}$, then, without loss of generality we assume that $b=0$ (as both the left-hand side and the right-hand side of \eqref{vel_err_cor} are invariant with respect to translations). We set $f(x) \coloneqq g( d_1 x_1, d_2 x_2)$ and $N' \coloneqq d_1 N$. We see that
\[
\oo =d_1^s  \lambda^{1-s} (N')^{-s} K^{-1} f(\lambda x) \sin (\lambda N' x_1).
\]
Thus, we can  apply \eqref{vel_err_cor} (with $L=\mathrm{id}$) to obtain
\[
\begin{split}
\| v [\oo \circ L ] - \ov [\oo \circ L ] \|_{C^{k,\alpha }} &\lec_\varepsilon d_1^s \| f \|_{C^{k+4}} K^{-1} \lambda^{1+k+\alpha -s } (N')^{-1+k+\alpha -s + \varepsilon } \\
&\leq d_1^{2k+3+\alpha +\varepsilon } \| g \|_{C^{k+4}} K^{-1} \lambda^{1+k+\alpha -s} N^{-1+k+\alpha -s + \varepsilon },
\end{split}
\]
as required.
\end{proof}

\section{Norm inflation}\label{sec_norm_infl}

Here we prove Theorem~\ref{T00}. We will suppose that the solution $\theta$ to the SQG equation consists of the background part $\tb$ and the perturbation part $\tp$,
so that 
\[
\theta = \tb + \tp.
\]
As for the background part $\tb$, we will use the \emph{pseudosolution for the background}
\eqnb\label{pseudo_bg}
\otb (x) \coloneqq \sum_{m=0}^{P-1} \oo (R_{-m}( x_1-\lambda^{-1/2},x_2)) =: \sum_{m=0}^{P-1} \otb_m ,
\eqne
where $\oo$ is defined by \eqref{oo}, i.e. $\oo (x)=K^{-1} \lambda^{1-s}N^{-s}   {g(\lambda x)} {\sin(\lambda N x_{1}) \sin(\lambda N x_{2})}$, and we denote by
\[
R_m (x_1,x_2) \coloneqq \left(\cos \frac{2\pi m}P x_1 - \sin \frac{2\pi m}P x_2, \sin \frac{2\pi m}P x_1 + \cos \frac{2\pi m}P x_2\right), \qquad  m\in \Z,
\]
the rotation (counterclockwise) by $2\pi m/P$. We also define the \emph{pseudovelocity for the background},
\[
\ov [\otb ](x) \coloneqq \sum_{m=0}^{P-1} R_m \ov [\oo  ](R_{-m}x).
\]

Note that 
\eqnb\label{otb_Cka_est}
\| \otb \|_{C^{k,\alpha}} \sim_{k,\alpha} (\lambda N )^{k-1+\alpha } \log N 
\eqne
for all $k\geq 0$, $\alpha \in [0,1)$. We denote by $\tb$ the local-in-time strong solution to the SQG equation with initial data $\otb $. Since rotational symmetries are preserved by the SQG flow, we note that $\tb (\cdot ,t )$ is $P$-fold symmetric for each $t$.

\subsection{The background flow}

In this section we verify that $\tb (t)$ can be approximated, for a short time, by $\otb$. Namely, we set 
\[ 
\ttb \coloneqq \tb - \otb
\]
(recall \eqref{pseudo_bg} that $\otb$ is a time-independent function), and we let $\alpha >0$ be any number such that
\[
\alpha <2-s.
\]
In order to obtain a sufficiently strong control of the error $\ttb$, we will estimate its $C^{1,\alpha}$ norm on a short time interval $[0,T]$. To be precise, given $P\geq 1$ (the periodicity in the angular variable) and $K\geq 1$ (the smallness parameter of the data \eqref{pseudo_bg}) we let $\varepsilon \in (0,1/10] $ and $T$ be sufficiently small so that 
\eqnb\label{choice_eps_T}
\alpha <(2-s) (1-\varepsilon - 3\kappa  T),
\eqne
where $\kappa = \kappa  (P,K,\alpha )$ is a constant defined in \eqref{bg_c1alpha} below. With such choices of $\varepsilon , T$ we prove the following estimates on the background error $\ttb$.
\begin{lemma}[Control of the background flow]\label{L_bg}
For all sufficiently large $\lambda \geq 1$ (dependent on $P,K$) $\tb $ exists and remains smooth until $T$ and
\eqnb\label{bg_ests}
\begin{split}
\| \ttb \|_\infty &\lec_{K,P} \lambda^{-1} N^{-2+\varepsilon + ct} { (\log N)^2t } ,\\
\| \ttb \|_{C^1} &\lec_{K,P}  N^{-1+\varepsilon + 2ct } {(\log N)^3 } ,\\
\| \ttb \|_{C^{1,\alpha}} &\lec_{K,P}    N^{\frac{\alpha(s-1)}{2-s} + \alpha - 1 +\varepsilon +3cT } t (\log N)^{4+\alpha/(2-s)},\\
\|  \ttb \|_{C^{k,a }} &\lec_{K,P,a,k}  \lambda^{k-1+a} N^{k-2+a + \varepsilon +3ct } (\log N )^4,\\
\| \ttb \|_{H^\beta } &\lec_{K,P}  \lambda^{\beta -s} N^{\beta -s-1 +\varepsilon +cT}  (\log N)^2
\end{split}
\eqne
for all $t\in [0,T]$, $k\geq 0$, $a\in (0,1)$, $\beta \in (1,2)$.
\end{lemma}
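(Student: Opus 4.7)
The plan is to run a bootstrap argument on the perturbation $\ttb \coloneqq \tb - \otb$. Since $\otb \in C_c^\infty$, classical SQG well-posedness (e.g., Wu) yields a unique smooth $\tb$ up to some lifespan $T^*(\lambda)$; the goal is to propagate the bounds \eqref{bg_ests} on an interval $[0,T]$ with $T$ independent of $\lambda$, so that the standard $\int \|\nabla \tb\|_\infty$ continuation criterion forces $T^* \geq T$. The driving equation is
\[
D_t \ttb = -v[\tb]\cdot \nabla \otb = -\ov[\otb]\cdot \nabla \otb -(v[\otb]-\ov[\otb])\cdot \nabla \otb - v[\ttb]\cdot \nabla \otb, \qquad D_t \coloneqq \p_t + v[\tb]\cdot \nabla,
\]
in which the first two source terms are small by the pseudovelocity cancellation \eqref{G_bound}--\eqref{cancel_C1} applied to each rotated summand of \eqref{pseudo_bg} together with the velocity-approximation bounds \eqref{vel_err_ck}--\eqref{vel_err_ck1}, and the third is a linear feedback into $\ttb$ controlled through \eqref{vel_est}.

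For the $L^\infty$ estimate I evaluate along the $v[\tb]$-characteristics, obtaining $\tfrac{d}{dt}\|\ttb\|_\infty \leq \|F\|_\infty$ with $F$ denoting the RHS above. The cancellation contributes $\lec \lambda^{-1}N^{-2}(\log N)^2$; the approximation error contributes $\lec \lambda^{-1}N^{-2+\varepsilon}(\log N)^2$ after combining \eqref{vel_err_ck} with $\|\nabla \otb\|_\infty \lec \log N$ (via \eqref{lambdaN}); and the feedback contributes $\lec \|\ttb\|_\infty \log N$, once \eqref{vel_est} is applied and the bootstrap hypothesis bounds $\log(2+\|\ttb\|_{C^{1,\alpha}})$ by $\log N$. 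Then \eqref{ode_fact} produces the asserted $\lambda^{-1}N^{-2+\varepsilon+ct}(\log N)^2 t$. The higher-order $C^{k,a}$ bounds follow by differentiating $D_t\ttb = F$ and running the corresponding commutator Gronwall: each derivative contributes a factor of $\lec \log N$ to the effective Gronwall rate (reflecting the hyperbolic Lyapunov exponent that $\otb$ induces near the origin), giving one extra factor of $N^{ct}$ per derivative, while the source-side Hölder norms $\|F\|_{C^{k,a}}$ are handled through \eqref{cancel_C1}, \eqref{vel_err_ck1}, and the a priori estimate \eqref{otb_Cka_est}.

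The $C^{1,\alpha}$ bound follows from the standard Hölder transport inequality $\tfrac{d}{dt}\|\ttb\|_{C^{1,\alpha}} \lec \|\nabla v[\tb]\|_{C^\alpha}\|\ttb\|_{C^{1,\alpha}} + \|F\|_{C^{1,\alpha}}$, in which the coefficient is a polylogarithmic power of $N$; the constraint $\alpha < 2-s$ is exactly what makes this manageable through \eqref{lambdaN}, and Gronwall produces the stated exponent $N^{\alpha(s-1)/(2-s)+\alpha-1+\varepsilon+3cT}$. The $\dot H^\beta$ bound ($\beta \in (1,2)$) then follows by combining the $C^{k,a}$ estimates for suitable $k,a$ with the Hölder-to-$H^\beta$ inequality (Lemma~\ref{L_holder_to_Hb} when $\beta < 1+\alpha$, and its higher-order analogue in general), applied to each rotated summand $\ttb_m$ and summed via Lemma~\ref{L_ss_cons}; this uses that $\supp\,\ttb_m$ is contained in a ball of radius $\lec \lambda^{-1/2}N^{cT}$, which is a consequence of Lemma~\ref{L_approach} and $\|\nabla u[\tb]\|_\infty \lec (\log N)^2$ over an absolute time.

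The main obstacle is closing the bootstrap: every estimate reinserts $\|\ttb\|_{C^{1,\alpha}}$ into its own Gronwall rate through \eqref{vel_est}, so the cancellation-driven smallness $\lambda^{-1},N^{-1}$ must genuinely beat the Gronwall loss $e^{ct\log N} = N^{ct}$ uniformly on $[0,T]$. This is achievable because $s\in(\tfrac{3}{2},2)$ and \eqref{lambdaN} force $\lambda$ to be polynomial in $N$ with controlled exponent, and because the Gronwall rates $c$ can be made absolute by choosing $T$ depending only on $P,K,\alpha$; once all bootstrap inequalities self-improve, the continuation criterion upgrades the local smooth solution to exist on the whole $[0,T]$, which finishes the lemma.
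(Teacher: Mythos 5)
Your overall decomposition, bootstrap structure, and Gronwall strategy match the paper's proof exactly: both write $\p_t\ttb + v[\tb]\cdot\nabla\ttb$ as the sum of a cancellation term $\ov[\otb]\cdot\nabla\otb$, an approximation-error term $(v[\otb]-\ov[\otb])\cdot\nabla\otb$, and a feedback term $v[\ttp]\cdot\nabla\otb$, then propagate norm bounds under a continuity hypothesis and close the bootstrap via $\alpha<2-s$ and \eqref{lambdaN}. The $L^\infty$ and $C^1$ steps are correct and mirror the paper.

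However, your $C^{1,\alpha}$ step has a genuine gap. You write the transport inequality as $\frac{\d}{\d t}\|\ttb\|_{C^{1,\alpha}} \lesssim \|\nabla v[\tb]\|_{C^\alpha}\,\|\ttb\|_{C^{1,\alpha}} + \|F\|_{C^{1,\alpha}}$ and assert that the Gronwall coefficient $\|\nabla v[\tb]\|_{C^\alpha}$ is a polylogarithmic power of $N$. That is false: $\|v[\tb]\|_{C^{1,\alpha}} \sim (\lambda N)^\alpha \log N$, which is polynomial in $N$, so this Gronwall rate would produce $\exp\bigl(cT(\lambda N)^\alpha\log N\bigr) = N^{cT(\lambda N)^\alpha}$, destroying the entire smallness. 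The paper (following Wu) uses the product-structured Hölder transport estimate
$\frac{\d}{\d t}\|\ttb\|_{C^{1,\alpha}} \lesssim \|v[\tb]\|_{C^1}\|\ttb\|_{C^{1,\alpha}} + \|v[\tb]\|_{C^{1,\alpha}}\|\ttb\|_{C^1} + \|F\|_{C^{1,\alpha}}$,
so the coefficient of $\|\ttb\|_{C^{1,\alpha}}$ is only $\|v[\tb]\|_{C^1}\lesssim\log N$, and the large factor $(\lambda N)^\alpha\log N$ is confined to a source term that is multiplied by the already-established smallness $\|\ttb\|_{C^1}\lesssim N^{-1+\varepsilon+2ct}(\log N)^3$. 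This distinction is precisely what makes the bootstrap closeable under the constraint $\alpha<(2-s)(1-\varepsilon-3c_0T)$.

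Your route to the $H^\beta$ bound is also different from the paper's and, as stated, does not produce the claimed estimate. Transferring the $C^{1,a}$ bound on $\ttb$ (with $a>\beta-1$) to $\dot H^\beta$ via Lemma~\ref{L_holder_to_Hb} and a support radius $R$ produces $\lambda^{\beta-2}N^{a-1+\varepsilon+3ct}$ per piece when $R\sim\lambda^{-1}$, which exceeds the target $\lambda^{\beta-s}N^{\beta-s-1+\varepsilon+cT}$ by a factor of order $N^{1+a-\beta}\log N$ — an unbounded loss, worse if you use $R\sim\lambda^{-1/2}N^{cT}$ as you suggest. The paper instead runs a direct $H^\beta$ Gronwall estimate (for $\beta\in\{1,2\}$), where the source terms, being explicit and sharply scaled, are lossless under Lemma~\ref{L_holder_to_Hb}, and then Sobolev-interpolates. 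Also, Lemma~\ref{L_ss_cons} is stated for concentric dyadic annuli $B_{2R_j}\setminus B_{R_j}$ with $R_{j+1}\le R_j/4$; the $P$ rotated pieces $\ttb_m$ sit at comparable radii and are not of this form, so that lemma does not apply — for them one simply uses disjointness of supports.
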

\begin{proof}
We first note that $\ttb$ satisfies the PDE
\eqnb\label{eq_ttb}
\begin{split}
-\p_t \ttb &=  v[\tb  ] \cdot \na \tb \\
&=v[\tb  ] \cdot \na \tb - \ov [\otb ] \cdot \na \otb +G \\
&= v[\otb ] \cdot \na \ttb +v[\ttb ] \cdot \nabla \ttb+  \left( v[\otb ]- \ov [\otb ] \right) \cdot \nabla \otb + v [\ttb ] \cdot \na \otb + G 
\end{split}
\eqne
with homogeneous initial data, where
\[
G\coloneqq \ov [\otb ] \cdot \nabla \otb.
\]
Due to the separation of the pieces in \eqref{pseudo_bg} $G$ enjoys the same estimates as in \eqref{cancel_C1}--\eqref{cancel_Hbeta}. Namely we see that $\| G \|_{C^k} \lec \lambda^{k -1} N^{k -2} (\log N)^2$ for all $k\geq 0$, and so, by H\"older interpolation and Lemma~\ref{L_holder_to_Hb},
\eqnb\label{G_ests}
\begin{split}
\| G \|_{C^{k,\alpha}} &\lec  \lambda^{k+\alpha -1} N^{k+\alpha -2} (\log N)^2,\\
\|G \|_{\dot H^\beta} &\lec_P \| \ov [\oo ] \cdot \na \oo \|_{C^{1,\alpha }} \lambda^{\beta-\alpha -2}\lec \lambda^{\beta -2} N^{\alpha -1} (\log N)^2
\end{split}
\eqne
for $k\geq 0$, $\alpha \in [0,1)$, $\beta \leq k+\alpha $. As for the third term on the right-hand side of \eqref{eq_ttb}, we recall from Lemma~\ref{L01} that
\eqnb\label{vel_est1}
\| v[\otb ] - \ov [\otb ] \|_{C^{k,\alpha}} \lec_P K^{-1} \lambda^{1+k+\alpha -s} N^{-1+k+\alpha -s +\varepsilon }= \lambda^{-1+k+\alpha  }N^{-2+k+\alpha +\varepsilon } \log N
\eqne
for $k\geq 0$, $\alpha \in [0,1)$. We let $T' \in [0,T]$ be the largest time such that $\| \ttb \|_{C^{1,\alpha }} \leq 1$ for $t\leq T'$. In particular, we can use \eqref{vel_est} to obtain
\eqnb\label{vel_esttt}
\| v[ \ttb ] \|_{L^\infty } \lec \| \ttb \|_{L^\infty }, \qquad \| v[ \ttb ] \|_{C^1 } \lec \| \ttb \|_{C^1 }
\eqne
for $t\in [0,T']$, and we have 
\[
\begin{split}
\p_t \| \ttb \|_{\infty} &\lec   \left( \| v[\otb ] - \ov [\otb ] \|_{\infty } + \| v [\ttb ] \|_\infty \right) \| \na \otb \|_\infty +  \| G \|_\infty\\
&\lec \left( C_{K,P} \lambda^{-1} N^{-2+\varepsilon }\log N + \| \ttb \|_{\infty}  \right) \log N + \lambda^{-1}N^{-2} (\log N)^2\\
&\leq \| \ttb \|_\infty \log N + C_{K,P}\lambda^{-1}N^{-2+\varepsilon } (\log N)^2 ,
\end{split}
\]
where we used \eqref{G_ests}, \eqref{vel_est1} and \eqref{otb_Cka_est}. Hence (recalling the ODE fact~\eqref{ode_fact}),
\eqnb\label{ttb_infty}
\| \ttb \|_\infty \lec_{K,P} \lambda^{-1} N^{-2+\varepsilon + ct} { (\log N)^2t } ,
\eqne
and consequently
\eqnb\label{v_001}
\| v[\tb ] - \ov [\otb ]\|_{L^\infty } \leq \| v [\ttb ] \|_{L^\infty} + \| v [\otb ] - \ov [\otb ] \|_{L^\infty } \lec_{K,P} \lambda^{-1} N^{-2+\varepsilon +ct } (\log N)^2.
\eqne

As for the $C^1$ norm,
\[
\begin{split}
\p_t \| \ttb \|_{C^1 } &\lec  \left( \| \ov [\otb ]\|_{C^1} + \| v [\otb ] - \ov [\otb ] \|_{C^1} + \| v [\ttb ] \|_{C^1}\right) \| \ttb \|_{C^1}    + \|  v [\otb ] -  \ov [\otb ] \|_{C^1} \| \na \otb \|_\infty \\
&\hspace{0.5cm}+\|  v [\otb ] -  \ov [\otb ] \|_{\infty } \| \na \otb \|_{C^1}  +  \| v [\ttb ] \|_\infty \| \otb \|_{C^2}+ \| v [\ttb ] \|_{C^1} \| \otb \|_{C^1} + \| G \|_{C^1}\\
&\lec   \| \ttb \|_{C^1} \left( \log N  + C_{K,P}N^{-1+\varepsilon } \log N + \| \ttb \|_{C^1}\right)  +N^{-1+\varepsilon } (\log N )^2\\
&\hspace{0.5cm} +  \| \ttb \|_{L^\infty} \lambda N \log N +\| \ttb \|_{C^1} \log N   \\
&\lec \| \ttb \|_{C^1} \log N + C_{K,P}(\log N)^3 N^{-1+\varepsilon+ct} ,
\end{split}
\]
where we used \eqref{vel_esttt}, \eqref{ttb_infty}, \eqref{otb_Cka_est}, \eqref{G_ests} and \eqref{vel_est1}. Thus 
\eqnb\label{ThetaC1}
\| \ttb \|_{C^1} \lec_{K,P} N^{-1+\varepsilon + 2ct }{(\log N)^3 t} \lec N^{-1+\varepsilon + 2ct } {(\log N)^3 } 
\eqne
for $t\leq T'$, and consequently
\eqnb\label{vWC1}
\| v [\ttb ]\|_{C^1} \lec_{K,P} N^{-1+\varepsilon + 2ct } {(\log N)^3 } 
\eqne
for $t<T'\leq 1$. This and \eqref{vel_err_c1} imply that
\eqnb\label{vel_bg_c1}
\| v [\tb ] - \ov [\otb ] \|_{C^1} \lec_{K,P}  N^{-1+\varepsilon + 2ct } {(\log N)^3 }
\eqne
for such $t$.

For the $C^{1,\alpha}$ norm estimate, we obtain, as in Wu \cite{Wu}, 
\[\begin{split}
\frac{\d }{\d t}\| \ttb \|_{C^{1,\alpha}}& \lec \left( \| \ov [\otb ]\|_{C^{1,\alpha} } + \| v [\otb ] - \ov [\otb ] \|_{C^{1,\alpha}} + \| v [\ttb ] \|_{C^{1,\alpha}}\right) \| \ttb \|_{C^1}\\
&\hspace{0.5cm}+\left( \| \ov [\otb ]\|_{C^1} + \| v [\otb ] - \ov [\otb ] \|_{C^1 } + \| v [\ttb ] \|_{C^1 }\right) \| \ttb \|_{C^{1,\alpha}} \\
& \hspace{0.5cm} + \|  v [\otb ] -  \ov [\otb ] \|_{C^{1,\alpha}} \|  \otb \|_{C^1} +\|  v [\otb ] -  \ov [\otb ] \|_{\infty } \|  \otb \|_{C^{2,\alpha}}  +  \| v [\ttb ] \|_\infty \| \otb \|_{C^{2,\alpha}}\\
&\hspace{0.5cm}+ \| v [\ttb ] \|_{C^{1,\alpha }} \| \otb \|_{C^1} + \| G \|_{C^{1,\alpha}}\\
&\lec \left( (\lambda N)^{\alpha}\log N + C_{K,P}\lambda^{\alpha} N^{-1+\alpha + \varepsilon } \log N+ \| \ttb  \|_{C^{1,\alpha}}\right)  N^{-1+\varepsilon + 2ct } (\log N)^3   \\
&\hspace{0.5cm}+\left( \log N +C_{K,P}  N^{-1+\varepsilon + 2ct } {(\log N)^3 } + C_{K,P}  N^{-1+\varepsilon + 2ct } {(\log N)^3 } \right) \| \ttb \|_{C^{1,\alpha}} \\
& \hspace{0.5cm} + C_{K,P}\lambda^{\alpha} N^{-1+\alpha + \varepsilon } (\log N)^2  + C_{K,P}\lambda^{\alpha }N^{-1+\alpha + \varepsilon + ct} (\log N )^2  + \| \ttb  \|_{C^{1,\alpha }} \log N \\
&\hspace{0.5cm}+ \lambda^{\alpha } N^{-1+\alpha } (\log N )^2 \\
&\leq \| \ttb \|_{C^{1,\alpha } } \log N +C_{K,P}\lambda^{\alpha} N^{-1+\alpha + \varepsilon +2ct } (\log N )^4.
\end{split}
\]
Thus there exists $\kappa >0$ such that
\eqnb\label{bg_c1alpha}
\| \ttb \|_{C^{1,\alpha}} \lec_{K,P}  \lambda^\alpha N^{\alpha -1+\varepsilon +3\kappa  t} t (\log N)^4 \lec_K  N^{\frac{\alpha(s-1)}{2-s} + \alpha - 1 +\varepsilon +3\kappa 0 T } t (\log N)^{4+\alpha/(2-s)}
\eqne
for $t\leq T'$. 
Note that, by \eqref{choice_eps_T} the power of $N$ in \eqref{bg_c1alpha} is negative, which shows that for all sufficiently large $\lambda$, \eqref{bg_c1alpha} holds in fact for all $t\in [0,T]$. Thus $T'=T$, which in particular proves the first three estimates in \eqref{bg_ests}.

A similar computation as for the $C^{1,\alpha}$ estimate above gives 
\[
\|  \ttb \|_{C^{k,\alpha }} \lec_{K,P}  \lambda^{k-1+\alpha} N^{k-2+\alpha + \varepsilon +3ct } (\log N )^{C_{k,\alpha}}
\]
for $k\geq 0$, $\alpha \in (0,1)$, $t\in [0,T]$, as required.

Finally we can use the control of $C^{1,\alpha}$ to obtain smallness of $\ttb$ in $H^{\beta }$; we have 
\[
\begin{split}
\p_t \| \ttb \|_{H^\beta } &\lec  \left( \| \ov [\otb ]\|_{L^\infty } + \| v [\otb ] - \ov [\otb ] \|_{L^\infty } + \| v [\ttb ] \|_{L^\infty }\right)  \| \ttb \|_{H^\beta}  \\
&\hspace{0.5cm}+ \left( \| v [\otb ]\|_{H^\beta } + \| v [\ttb ] \|_{H^\beta }\right)  \| \ttb \|_{C^1 } \\
&\hspace{0.5cm}+ \| (v  [\otb ] - \ov [\otb ] )\cdot \nabla \otb \|_{H^\beta} +   \| v [\ttb] \|_\infty \| \otb \|_{H^{\beta+1}} + \| v [\ttb ] \|_{H^\beta} \| \otb \|_{C^1}+ \| G \|_{H^\beta }\\
&\lec \lambda^{-1}N^{-2+\varepsilon + ct} (\log N )^2 \| \ttb \|_{H^\beta} \\
&\hspace{0.5cm}+C_{K,P}\left( (\lambda N )^{\beta -s}+ \lambda^{-1}N^{-2+\varepsilon +ct} (\log N)^2 \right) N^{-1+\varepsilon +2ct } (\log N)^3 +  C_{K,P}\lambda^{\beta -2} N^{-1+\alpha' +\varepsilon } (\log N)^2 \\
&\hspace{0.5cm}+  C_{K,P}\lambda^{\beta -s} N^{\beta - s -1 + \varepsilon + ct} (\log N )^2 + \| \ttb \|_{H^\beta } \log N + C_{K,P} \lambda^{\beta -2} N^{\alpha'-1} (\log N)^2\\
&\lec  \| \ttb \|_{H^\beta } \log N +  C_{K,P}  \lambda^{\beta -s} N^{\beta - s -1 + \varepsilon + 2ct} (\log N )^2 
\end{split}
\]
for  $\beta \in \{ 1,2\} $,  where we used \eqref{G_ests} and \eqref{error_Hbeta} in the second inequality. Hence
\[
\| \ttb \|_{H^\beta } \lec_{K,P} \lambda^{\beta -s} N^{\beta - s -1 + \varepsilon + 3ct} (\log N )^2 
\]
for  $\beta \in \{ 1,2\} $, $t\in [0,T]$ and applying Sobolev interpolation this gives the $H^\beta$ estimate claimed in \eqref{bg_ests}.
\end{proof}

\subsection{The perturbation}

For the perturbation $\tp$ we consider initial conditions of the form
\eqnb\label{pert_data}
\tp (x,0) = \sum_{m=0}^{P-1} \omega_p \circ R_{-m}(x-(\lambda^{-1/2},0)) =: \sum_{m=0}^{P-1} \tp_m (x,0),
\eqne
where 
\eqnb\label{pert_def}
\omega_{p}(x)\coloneqq K^{-1}\lt^{1-s} \nt^{-s} {g(\lt x)}  \sin(\lt \nt  x_{1}).\eqne
Let us denote 
\[
z \coloneqq \left( \lambda^{-1/2 },0\right)
\]
for brevity. We denote by $\tp (x,t)$ the \emph{pertubation}; namely the function such that
\[
\theta \coloneqq \tb+ \tp
\]
is a solution to the SQG equation \eqref{sqg} with initial condition $\tb (\cdot ,0) + \tp(\cdot ,0)$.\\

We denote by $\eta$ the trajectory of the background flow, namely the solution to the ODE problem
\[
\p_t \eta (x,t) = v[\tb ](\eta(x,t),t), \qquad \eta(x,0) =x.
\]
Noting that $\oo$ (recall \eqref{oo}) has a hyperbolic point at $x=0$ we see that $\tb $ has $P$ hyperbolic points of $\tb$ located near $\eta(R_m(z),t)$, $m=0,\ldots , P-1$. We note that they are not necessarily located exactly at any of these points, say at $\eta (z,t)$ (i.e. $\eta(R_0(z,t))$), due to the influence of the remaining pieces $\otb_m \circ \eta^{-1}$, $m=1,\ldots , P-1$, via the velocity field. However, due to the $P$-fold symmetry, these hyperbolic points are located on the lines passing through $0$ and $\eta(R_m(z,t))$, $m=0,\ldots , P-1$. We emphasize that we do not need to control precisely the locations of these hyperbolic points, as the deformation due to a hyperbolic point persist in its neighbourhood, but we will need to ensure that each piece of the perturbation \eqref{pert_data} moves very little during time interval $[0,T]$.  (Recall that $T$ was fixed in \eqref{choice_eps_T}.)

For this, we first ensure that the trajectories $\eta $ originating from $R_m (z)$, $m=0,\ldots , P-1$, do not move away too much from the initial points. We achieve this by making use of the pseudosolution $\otb$ of the background; namely by noting that $\ov [\otb ]$ has hyperbolic points \emph{exactly} at $R_m (z)$, $m=0,\ldots , P-1$. At these points $\ov [\otb ]$ vanishes, which enables us to derive a Gronwall estimate for $|\eta (z,t) - z|$, see \eqref{eta_gronwall} below. Secondly, we will approximate $\tp$ by a pseudosolution $\otp$ which is deformed according to the 1st order Taylor expansion $\widetilde{v}$ of $v[\tb ]$ centered at $\eta (z,t)$. This way we keep track of the stretching due to $v[\tb ]$, and also control $\| \widetilde{v} - v[\tb ]\|_{C^{1,a}}$ for some $a>0$, see \eqref{vtilde_err_C1gamma}.
We note that $\tb (\cdot , t)$ is odd-odd and $P$-fold symmetric, since the SQG equation preserves these symmetries. Moreover $\tb (\cdot , t)$ is odd with respect to each of the lines $\{ (x,y) \in \R^2 \colon \arctan \frac{y}{x} = 2\pi m/P \}$, such that $m\in \Z$ or $m+1/2 \in \Z$. Indeed, the case of integer $m$ is clear by the rotations $R_m$ appearing in the definitions of the definitions of the initial data $\theta (\cdot ,0)$ (recall~\eqref{pseudo_bg} and \eqref{pert_data}), while the case of $m+1/2\in \Z$ follows from the odd symmetry of each piece with respect to its axis, namely that $\tb_0(x,y,0)=-\tb_0(x,-y,0)$ and $\tp_0(x,y,0)=-\tp_0(x,-y,0)$. In particular, defining the sectors
\[
S_m \coloneqq R_m (S_0), \quad \text{ where }\quad S_0\coloneqq \left\lbrace (x,y) \in \R^2 \colon \arctan\, \frac{y}{x} \in \left( -\frac{\pi}{P},\frac{\pi}P \right) \right\rbrace
\]
and $m=0,\ldots , P-1$, we see that
\eqnb\label{v_bdry_sectors}
n\cdot v[\theta ](x ,t)=0 \qquad \text{ for }\quad x\in \p S_m, m=0,\ldots , P-1.
\eqne

In other words, we can write 
\[
\tp = \sum_{m=0}^{P-1} \tp_m,
\]
where $\supp\,\tp_m \subset S_m $. \\

We now approximate $v [\tb ]$ on $\supp \, \tp_m $ by the hyperbolic flow near $\eta (R_m(z),t)$; namely we let
\[
A(t) \coloneqq \int_0^t \nabla v[\tb ](\eta(z,s),s) \d s,\qquad b(t) \coloneqq \int_0^t \ee^{-A(s)}\left( v[\tb ](\eta (z,s),s) -\nabla v[\tb ](\eta(z,s),s)  \eta(z,s) \right) \d s 
\]
and
\[
\tv (x,t) \coloneqq   b'(t) + A'(t) x ,
\]
and we estimate the evolution of $\tp$ by the solution $\otp$ to

\[\p_t \otp+\tv \cdot \nabla \otp =0
\]
with initial data $\otp(x,0)=\tp (x,0)$, i.e.
\eqnb\label{def_otp}
\otp (x,t) \coloneqq  \sum_{m=0}^{P-1} \omega_p  \circ R_{-m}  (\widetilde{\eta }^{-1} (x,t)) .
\eqne
where
\eqnb\label{teta_def}
\widetilde{\eta }^{-1} (x,t) = \ee^{-A(t) }x-b(t) 
\eqne
is the inverse of the Lagrangian trajectory of $\tv$.
\begin{lemma}[Structure of the deformation matrix]\label{L_A}
We have that 
\eqnb\label{size_of_A}
A(t) = \frac{1}{\sqrt{2}}\begin{pmatrix}
-\log N & 0 \\
0 & \log N
\end{pmatrix} t + \begin{pmatrix}
t & 0 \\
0 & t
\end{pmatrix}  O(N^{\alpha (-1+2\varepsilon + 2 ct )} \log N )
\eqne
as $N\to \infty$, uniformly with respect to $t\in [0,T]$, where $c>0$ is a constant such that $c\ll \kappa $.
\end{lemma}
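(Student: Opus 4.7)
The plan is to decompose $\nabla v[\tb](\eta(z,s),s)$ as $\nabla \ov[\otb](z)$ plus controlled error terms, compute the main term explicitly, integrate in time, and check the error satisfies the claimed bound. Concretely, write
\[
\nabla v[\tb](\eta(z,s),s) = \nabla \ov[\otb](z) + \bigl(\nabla v[\tb] - \nabla \ov[\otb]\bigr)(\eta(z,s),s) + \bigl(\nabla \ov[\otb](\eta(z,s)) - \nabla \ov[\otb](z)\bigr) .
\]
The first error is controlled by $\|v[\tb] - \ov[\otb]\|_{C^{1,\alpha}}$ (via Lemma~\ref{L_bg} and Lemma~\ref{L01}, extending \eqref{vel_bg_c1} to $C^{1,\alpha}$). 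The second error is controlled by $|\eta(z,s)-z|^\alpha$ times $\|\nabla \ov[\otb]\|_{C^{0,\alpha}}$.

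First, I would compute $\nabla \ov[\otb](z)$ directly. For sufficiently large $\lambda$, the supports of the pseudosolution pieces $\otb_m$, $m=1,\ldots,P-1$, are separated from a neighborhood of $z$ (since the centers are distance $\sim \lambda^{-1/2}$ apart, and each $g(\lambda \cdot)$ has support of diameter $\sim \lambda^{-1}$), so $\ov[\otb]=\ov[\oo](\cdot - z)$ near $z$. Differentiating the formula \eqref{ov} at $x=0$, the dominant contribution to $\partial_1 \ov[\oo]_1(0)$ comes from hitting the $\cos(\lambda N x_1)$ factor with $\partial_1$ (giving $-\lambda N$) while evaluating everything else at $x_1=x_2=0$; the $g$-derivative terms lose a factor of $N$ and are hence subdominant. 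Using the scaling relation $\lambda^{2-s}N^{1-s}=K\log N$ and the normalization of $g(0)$, one obtains the diagonal entries $\mp\log N$. The off-diagonal entries vanish at $x=0$, since each involves a factor $\sin(\lambda N x_i)$ with $i\in\{1,2\}$ and $x_i=0$.

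Second, I would bound $|\eta(z,s)-z|$ via a Gronwall argument that crucially uses $\ov[\otb](z)=0$ (the very reason to introduce the pseudovelocity). Indeed,
\[
\left|\tfrac{d}{ds}\eta(z,s)\right| \leq \|v[\tb] - \ov[\otb]\|_\infty + |\ov[\otb](\eta(z,s)) - \ov[\otb](z)| \leq \|v[\tb] - \ov[\otb]\|_\infty + \|\nabla \ov[\otb]\|_\infty |\eta(z,s)-z|,
\]
and combining \eqref{v_001}, $\|\nabla \ov[\otb]\|_\infty \lec \log N$, and \eqref{ode_fact} yields
\[
|\eta(z,s)-z| \lec_{K,P} \lambda^{-1} N^{-2+\varepsilon+2cs}(\log N)^2\, s .
\]
Since $\ov[\otb]$ oscillates at scale $(\lambda N)^{-1}$, $\|\nabla \ov[\otb]\|_{C^{0,\alpha}}\lec (\lambda N)^\alpha\log N$, so the second error term in the decomposition is bounded by $(\lambda N)^{\alpha}\log N \cdot (\lambda^{-1}N^{-2+\varepsilon+2cs}(\log N)^2 s)^{\alpha} \lec N^{\alpha(-1+\varepsilon+2cs)} (\log N)^{1+O(1)}$. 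The first error, by the $C^{1,\alpha}$ analogue of \eqref{vel_bg_c1} obtained in Lemma~\ref{L_bg}, enjoys a bound of the same (or smaller) order. Integrating in $s\in[0,t]$ and collecting factors, the errors fit into $t\cdot O(N^{\alpha(-1+2\varepsilon+2ct)}\log N)$, yielding \eqref{size_of_A}.

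The main obstacle is the bookkeeping of the three small parameters $(\lambda,N,\log N)$ throughout the H\"older interpolation; in particular, one has to carefully choose where to absorb the logarithmic factors into the $\varepsilon$-slack in the exponent so that the exponent of $N$ in the error reads exactly $\alpha(-1+2\varepsilon+2ct)$ after integration. The use of the pseudovelocity $\ov[\otb]$ (rather than $v[\otb]$) is essential, since only for $\ov[\otb]$ do we have the exact identity $\ov[\otb](z)=0$ that drives the Gronwall bound in Step~2; this is also what makes the off-diagonal entries of the leading-order matrix vanish exactly.
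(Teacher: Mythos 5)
Your overall strategy — Gronwall for $|\eta(z,\cdot)-z|$ driven by the exact identity $\ov[\otb](z)=0$, a two-term decomposition of $\nabla v[\tb](\eta)-\nabla\ov[\otb](z)$, and direct computation of $\nabla\ov[\otb](z)$ from the explicit formula \eqref{ov} — matches the paper. The only cosmetic difference is that you split the error as $(\nabla v[\tb]-\nabla\ov[\otb])(\eta) + (\nabla\ov[\otb](\eta)-\nabla\ov[\otb](z))$, paying with $\|\ov[\otb]\|_{C^{1,\alpha}}$, while the paper splits as $(\nabla v[\tb](\eta)-\nabla v[\tb](z)) + (\nabla v[\tb](z)-\nabla\ov[\otb](z))$, paying with $\|v[\tb]\|_{C^{1,\alpha}}$; both are of size $(\lambda N)^\alpha\log N$ and lead to the same bound, so this is an acceptable variant.

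However, there is one genuine gap. Equation \eqref{size_of_A} asserts that $A(t)$ is \emph{exactly} a diagonal matrix: the error term is $\mathrm{diag}(t,t)\cdot O(\cdot)$, not a generic $2\times 2$ matrix of size $O(\cdot)$. Your argument shows only that $\nabla\ov[\otb](z)$ is diagonal (that is where the $\sin(\lambda N x_i)|_{x_i=0}=0$ observation lives); the two error terms you introduce are generic matrix-valued perturbations, so they would leave nonzero off-diagonal entries of the same small order. The paper closes this gap with a symmetry argument that your proposal omits entirely: since $\tb(\cdot,t)$ is odd in $x_2$ for all $t$, $v_2[\tb]$ is odd and $v_1[\tb]$ is even in $x_2$, so $\partial_2 v_1[\tb]=\partial_1 v_2[\tb]=0$ identically on the $x_1$-axis; and because the odd symmetry forces $v_2[\tb]=0$ there, the trajectory $\eta(z,\cdot)$ remains on the $x_1$-axis, so $A'(t)=\nabla v[\tb](\eta(z,t),t)$ is diagonal for every $t$, hence so is $A(t)$. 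This exact diagonality is not a luxury: it is used downstream when $\ee^{-A(t)}$ is fed as a diagonal map $L(x)=(d_1x_1,d_2x_2)+b$ into Corollary~\ref{cor_vel_err} and in the form of the pseudosolution $\otp$ in \eqref{def_otp}. Without the symmetry step, you would only conclude $A(t)=\mathrm{diag}(-\log N,\log N)\,t + O(N^{\alpha(-1+2\varepsilon+2ct)}\log N)\,t$ as a full matrix, which is a strictly weaker statement than the lemma.
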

(Recall \eqref{lambdaN} that $\lambda^{2-s}N^{1-s} = K \log N$, and that $T>0$ is determined in \eqref{choice_eps_T} above.)
Note that the last claim of the lemma, together with the fact that $\varepsilon \in (0,1/10]$ implies that the second term on the right-hand side of \eqref{size_of_A} is negligible compared to the first term, on $[0,T]$.
\begin{proof}
We first note that, due to the odd-odd symmetry we have that $v_2[\tb ]=0$ on the $x_1$ axis and that $v_1 [\tb ]$ is even in $x_2$. Thus
\[
\p_2 v_1[\tb ] = \p_1 v_2 [\tb ] =0\qquad \text{ on the }x_1 \text{ axis}
\]
for all $t\in [0,T]$ (due to the odd-odd symmetry), and so in particular at $(\eta (z,t),t )$ ($t\in [0,T]$). This implies that the deformation matrix $A$ is diagonal. In order to obtain \eqref{size_of_A}, we note that
\eqnb\label{eta_gronwall}
\begin{split}
 \frac{\d }{\d t} |\eta (z,t)-z| &\leq  |v [\tb ] (\eta (z,t),t) | \\
 &\leq \|v [\tb ] - \ov [\otb ]\|_\infty + |\ov [\otb ] (\eta (z,t),t) -\ov [\otb ](z,t) |\\
 & \leq C_{K,P} \lambda^{-1} N^{-2+\varepsilon +ct } (\log N)^2  + \| \ov [\otb ] \|_{C^{1}} | \eta(z,t)-z|\\
 & \lec  C_{K,P} \lambda^{-1} N^{-2+\varepsilon +ct } (\log N )^2   + \log N  | \eta(z,t)-z|,
 \end{split}
\eqne
where we used \eqref{v_001} in the third line, and so the ODE fact \eqref{ode_fact} implies that 
 \[
 |\eta (z,t)-z| \leq C_{K,P} \lambda^{-1} N^{-2+\varepsilon +2ct } .
 \]
 Thus
\[\begin{split}
\left| \na v [\tb ] (\eta (z,t),t) - \na \ov [\otb ] (z,t) \right| &\leq \left| \na v [\tb ] (\eta (z,t),t)- \na v [\tb ] (z,t)\right| + \| v [\tb ] - \ov [\otb ] \|_{C^1} \\
&\lec_{K,P} \| v[\tb ] \|_{C^{1,\alpha }} |\eta (z,t) - z|^\alpha + N^{-1+\varepsilon +2ct} (\log N)^3 \\
&\lec_{K,P} N^{\alpha(-1+\varepsilon +2ct )}   \log N ,
\end{split}
\]
where we used \eqref{vel_est1} and \eqref{bg_c1alpha} to get 
\[\begin{split}
 \| v[\tb ] \|_{C^{1,\alpha }} &\leq \| \ov [\otb ] \|_{C^{1,\alpha }}+ \| v [\otb ] - \ov [\otb ] \|_{C^{1,\alpha }} + \| v[\ttb ] \|_{C^{1,\alpha }} \\
 &\lec_{K,P} (\lambda N)^\alpha \log N +  \lambda^\alpha N^{-1+\alpha + \varepsilon } \log N +  \lambda^\alpha N^{-1+\alpha + \varepsilon + 3ct } (\log N)^4\\
 &\lec_{K,P} (\lambda N)^\alpha \log N .
 \end{split}
 \]
Thus \eqref{size_of_A} follows, since $\nabla \ov[\otb ](z,t)$ equals to the first term on the right-hand side of \eqref{size_of_A}. Note also that we can assume that $c\ll \kappa $, since both constants arise from similar estimates, and we could have chosen $\kappa $ (in \eqref{bg_c1alpha}) arbitrarily large.
\end{proof}

 We now want to control the error
\[
\ttp \coloneqq \tp -\otp.
\]
To this end, we first let $\gamma \in (0,1)$ be sufficiently  small so that
\eqnb\label{choice_gamma}
3+2\gamma -2s <0,
\eqne
we fix $\eta\in (0,1)$ to be sufficiently small so that
\eqnb\label{choice_eta}
3+2\gamma -2s+2\eta <0,
\eqne
and we fix $\epsilon \in (0,1)$ be sufficiently small so that
\eqnb\label{eps_cond}
2-2s + \epsilon + \eta <0.
\eqne
We relate $\lt$ with $\nt$ via
\eqnb\label{lt_vs_nt}
\nt= \lt^{1-\eta },
\eqne
and we relate $\lt$ to $\lambda$ via 
\eqnb\label{lt_vs_lambda}
\lt = \lambda^B,
\eqne
where $B>1$ is a large constant, to be fixed below.
\begin{lemma}[Error of the perturbation]\label{L_pert}
Given $K,P \geq 1 $ there exists $c>0$ such that, for all sufficiently large $\lambda\geq 1$ (depending on $K,P$),
\eqnb\label{pert_ests}
\begin{split}
\| \ttp \|_{L^\infty } &\lec \lambda^c \lt^{2-\alpha -2s + \eta (s-1) },\\
\| \ttp \|_{C^1} &\lec \lambda^c  \lt^{3-2s + \eta(s-1)},\\
\| \ttp \|_{C^{1,a}}  &\lec \lambda^c  \lt^{3+2a-2s+2\eta },\\
\| \ttp \|_{H^s} &\lec  \lambda^c \lt^{-\eta },\\
\| \ttp \|_{H^4} &\lec  \lt^c 
\end{split}
\eqne
for all $a\in (0,1)$, $t\in [0,T]$.
\end{lemma}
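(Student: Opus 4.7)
The plan is to derive an evolution equation for the perturbation error $\ttp = \tp - \otp$ and then close the five estimates in \eqref{pert_ests} by a Gronwall-type bootstrap. Writing $\theta = \tb + \tp$ and subtracting $\p_t \tb + v[\tb]\cdot \na \tb = 0$ from the SQG equation for $\theta$ yields $\p_t \tp + v[\theta]\cdot \na \tp + v[\tp]\cdot \na \tb = 0$. Combining this with the defining equation $\p_t \otp + \tv \cdot \na \otp = 0$ and splitting $v[\theta] = v[\tb] + v[\otp] + v[\ttp]$ as well as $v[\otp] = \ov[\otp] + (v[\otp] - \ov[\otp])$, I obtain
\[
\p_t \ttp + v[\theta] \cdot \na \ttp = -(v[\tb] - \tv) \cdot \na \otp - \ov[\otp] \cdot \na \otp - (v[\otp] - \ov[\otp]) \cdot \na \otp - v[\ttp] \cdot \na \otp - v[\tp] \cdot \na \tb,
\]
with zero initial data. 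The first three terms on the right are the genuine forcing, while the last two are linear in $\ttp$ and will contribute to the Gronwall exponent.

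For the forcing, I would bound each term separately. Since, as one verifies directly from \eqref{transport2}, the field $\tv$ is precisely the first-order Taylor polynomial of $v[\tb]$ at $\eta(R_m z, t)$ on each sector $S_m$, estimate \eqref{c2_error} together with Lemma~\ref{L_bg} gives $\|v[\tb] - \tv\|_{C^{1,\gamma}(\supp\,\otp_m)} \lec \|v[\tb]\|_{C^2} R^{1-\gamma}$, where $R \sim N^t/\lt$ is the diameter of the rectangular support of $\otp_m$ (stretched in $x_2$ by $\ee^{A(t)}$ as described in Lemma~\ref{L_A}); multiplying by $\na\otp$ produces smallness because the scale separation \eqref{lt_vs_lambda} between $\lt$ and $\lambda$ can be chosen arbitrarily large. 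The self-interaction $\ov[\otp]\cdot\na\otp$ is handled by direct computation exactly as $G$ was in the proof of Lemma~\ref{L_bg}, yielding cancellation of order $\lt^{3-2s}\nt^{-2s}$ (analogous to \eqref{cancel_C1}), while $(v[\otp] - \ov[\otp])\cdot\na\otp$ is controlled via Corollary~\ref{cor_vel_err} applied to $\omega_p\circ R_{-m}\circ L$ with $L(x)=\ee^{-A(t)}x-b(t)$; the factor $d_1 = N^{t(1+o(1))}$ from Lemma~\ref{L_A} produces only a polynomial-in-$N$ loss that is absorbed by the negative powers of $\lt$ guaranteed by \eqref{choice_gamma}--\eqref{eps_cond}. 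Finally, $v[\tp]\cdot\na\tb$ is controlled via \eqref{vel_est} with the $L^\infty$ and $C^1$ bounds on $\tb$ from Lemma~\ref{L_bg}.

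With these forcing bounds I would run a bootstrap: first close $\|\ttp\|_{L^\infty}$ using straightforward transport, then $\|\ttp\|_{C^1}$ and $\|\ttp\|_{C^{1,a}}$ using the standard $C^{1,\alpha}$ transport estimates (as in Wu \cite{Wu}, mirroring the argument for Lemma~\ref{L_bg}), with each step absorbing the linear $v[\ttp]$ contributions into the Gronwall exponent. The main obstacle is the $H^s$ estimate: since a direct Fourier computation (using the change of variables $L$ and the tracelessness $\det\ee^{-A(t)}=1$ from Lemma~\ref{L_A}) gives $\|\otp\|_{\dot H^\beta} \sim K^{-1} N^{\beta t} \lt^{(2-\eta)(\beta-s)}$, obtaining a bound of order $\lt^{-\eta}$ at $\beta = s$ requires very tight control of every forcing term in $H^s$. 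My strategy is to first carry out a crude $H^4$ energy estimate (yielding $\|\ttp\|_{H^4}\lec \lt^c$, using the $H^\beta$ bound on $\tb$ from Lemma~\ref{L_bg} and the $H^\beta$ version of \eqref{error_Hbeta} applied to $\otp$), and then to interpolate Sobolev-style between the already established $C^1$ bound (which has a negative power of $\lt$) and this $H^4$ bound; the choices \eqref{choice_gamma}--\eqref{eps_cond} together with \eqref{lt_vs_nt} are calibrated precisely so that the resulting $H^s$ exponent of $\lt$ is $-\eta$, while the remaining $\lambda^c$ prefactors are absorbed by taking $B$ in \eqref{lt_vs_lambda} sufficiently large.
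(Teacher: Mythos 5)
Your derivation of the evolution equation for $\ttp$ is correct, and the bootstrap scheme $L^\infty \to C^1 \to C^{1,a}$, together with the treatment of the three forcing terms ($\ov[\otp]\cdot\na\otp$ by the same cancellation as $G$, $(v[\otp]-\ov[\otp])\cdot\na\otp$ via Corollary~\ref{cor_vel_err}, and the Taylor error $(v[\tb]-\tv)\cdot\na\otp$ via \eqref{c2_error} and \eqref{supp_otp}), matches the paper's proof in structure. One small bookkeeping slip: $v[\tp]\cdot\na\tb$ is not linear in $\ttp$ — it contains the forcing piece $v[\otp]\cdot\na\tb$ — but you do treat it as forcing in the end, so this is harmless.

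The genuine gap is in the $H^s$ bound. Your proposal is to obtain a crude $H^4$ bound $\|\ttp\|_{H^4}\lec\lt^{c_4}$ (with $c_4>0$) and then ``interpolate Sobolev-style'' with the $C^1$ bound, claiming the calibration of $\gamma,\eta,\epsilon$ yields $\lt^{-\eta}$. This does not work. Converting the $C^1$ bound to $H^1$ via the support diameter $\lt^{-1}\lambda^c$ gives at best $\|\ttp\|_{H^1}\lec\lambda^c\lt^{2-2s+\eta(s-1)}$. Then $\|\ttp\|_{H^s}\lec\|\ttp\|_{H^1}^{(4-s)/3}\|\ttp\|_{H^4}^{(s-1)/3}$, and even with the sharp $H^4$ exponent $c_4 = 8-2s+\eta(s-5)$ the $\lt$-exponent is
\[
\tfrac{4-s}{3}\bigl(2-2s+\eta(s-1)\bigr)+\tfrac{s-1}{3}\bigl(8-2s+\eta(s-5)\bigr) = -\tfrac{(s-1)\eta}{3},
\]
which is strictly weaker than $-\eta$ since $(s-1)/3<1$, so the claimed calibration fails. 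The loss comes precisely from the $C^1\!\to\!H^1$ conversion: the support diameter captures only the outer scale $\lt^{-1}$ and not the oscillation scale $\nt=\lt^{1-\eta}$, so the $\eta$-coefficient stays at $s-1$ instead of dropping to $s-2$. The paper avoids this by running the Gronwall estimate directly in $H^1$ and $H^2$, where the forcing bound is $\lambda^c\lt^{2k-2s+\eta(s-1-k)}$ whose $\eta$-coefficient is $s-1-k$; since this exponent is affine in $k$ and equals $-\eta$ at $k=s$, interpolating between $H^1$ and $H^2$ gives exactly $-\eta$. To close your argument you would need to replace the $C^1\!\to\!H^1$ conversion by a direct Sobolev Gronwall estimate at two integer levels; as written, the fourth inequality in \eqref{pert_ests} is not established.
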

Before we prove the lemma, we fix $B$ to be sufficiently large so that the quantities on the right-hand side of \eqref{pert_ests} with $a\coloneqq \gamma$ are decreasing with $\lambda \to \infty$. We also note that (by \eqref{def_otp}), 
\[ \supp\, \otp (t)= \bigcup_{m=0}^{P-1} R_m (D(t)  ),
\]
where
\[
D(t)\coloneqq  \widetilde{ \eta }( \supp \psi_0 (\cdot ,0) ,t)
\]
 (recall \eqref{teta_def} that $\widetilde{\eta}$ denotes Lagrangian trajectory of $\tv$), so that in particular
\eqnb\label{supp_otp}
\mathrm{diam}\, D(t) \lec \lt^{-1}N^{ct}
\eqne
\eqnb\label{supp_otpold}
\mathrm{diam}\, (\supp\, \otp (t) ) \lec \lt^{-1}N^{ct}
\eqne
and
\eqnb\label{otp_holder_norms}
\| \otp \|_{C^{k,\alpha} } \lec \lt^{k+1+\alpha -s} \nt^{k+\alpha -s}  N^{c(k+\alpha )t}
\eqne
for $k\geq 0$, $\alpha \in [0,1)$. Thus also
\eqnb\label{votp_holder_norms}
\| v[\otp ] \|_{C^{k,\alpha} } \lec \lt^{k+1+\alpha -s} \nt^{k+\alpha -s}  N^{c(k+\alpha )t} \log (\lt \nt ).
\eqne

Moreover, letting 
\eqnb\label{ov_otp_def}
\ov [\otp ] \coloneqq  \lt^{1-s} \nt^{-s}K^{-1} \sum_{m=0}^{P-1} g \circ R_{-m} (x' ) \begin{pmatrix}
0\\
\cos (\lt \nt x'_1 )
\end{pmatrix},
\eqne
where $x'\coloneqq \ee^{-A(t)}x-b(t) \sim N^t x -b(t)$, we have that, as in \eqref{G_ests},
\[
\ov [\otp ] \cdot \na \otp = \lt^{3-2s} \nt^{-2s} K^{-2} g(\lt x' ) \p_2 f (\lt x' ) \cos (\lt \nt x'_1 ) \sin (\lt \nt x'_1 ),
\]
and so
\[
\| \ov [\otp ] \cdot \na \otp \|_{C^{k,\alpha }} \lec  \lt^{k+\alpha +3-2s} \nt^{k+\alpha -2s} N^{c(k+\alpha )t}.
\]
By Corollary~\ref{cor_vel_err} we have $\| v[\otp ] - \ov  [\otp ] \|_{C^{k,\alpha }} \lec_\epsilon  \lt^{1+k+\alpha -s } \nt^{-1+k+\alpha-s +\epsilon } N^{c(2k+3+\alpha + \epsilon )t}$ for every $\epsilon \in (0,1)$, and so
\eqnb\label{333}
\| v [\otp ] \cdot \na \otp \|_{C^{k,\alpha }} \lec_\epsilon K^{-2} \lt^{k+\alpha +3-2s} \nt^{k+\alpha +\epsilon -2s} N^{c(2k +4+\alpha + \epsilon ) t}
\eqne
for $k\geq 0$, $\alpha \in [0,1)$, $\epsilon \in (0,1)$.

Moreover, by \eqref{c2_error} and \eqref{supp_otp},
\eqnb\label{vtilde_err_C1gamma}
\| \tv - v [\tb ] \|_{C^{1,a}(\supp\, \otp)} \leq \| v [\tb ] \|_{C^{2 } }  \left( \mathrm{diam}\, D(t) \right)^{1-a } \lec  \lambda N \log N \cdot  \lt^{a -1 } N^{c(1-a  )t}
\eqne
 and 
\eqnb\label{vtilde_err_Linfty}
\| \tv - v [\tb ] \|_{L^\infty (\supp\, \otp)} \leq \| v [\tb ] \|_{C^{2 } }  \left( \mathrm{diam}\, D(t) \right)^{2 } \lec_{K,P}  \lambda N \log N \cdot  \lt^{-2 } N^{2c t}.
\eqne
As for the Sobolev norms of $\otp$ we have
\[
\begin{split}
\| \otp \|_{L^2} &\sim P^{1/2} \lt^{-s} \nt^{-s} ,\\
\| \otp \|_{\dot H^1} &\sim P^{1/2}\lt^{1-s} \nt^{1-s}\ee^{-A(t)} \sim \lt^{1-s} \nt^{1-s } N^t  ,\\
\| \otp \|_{\dot H^2} &\sim P^{1/2}\lt^{2-s} \nt^{2-s} N^{2t},
\end{split}
\]
which imply that (since $\| f \|_{\dot H^1}\lec \| f \|_{L^2}^{(\beta -1)/\beta } \| f \|_{\dot H^\beta  }^{1/\beta }$ for $\beta >1$)
\[
\| \otp \|_{\dot H^\beta} \gec  \| \otp \|_{\dot H^1}^\beta \| \otp \|_{L^2}^{1-\beta}  \sim_{P} K^{-1}   \lt^{\beta (1-s) +(1-\beta )(-s)}\nt^{\beta (1-s)+(1-\beta ) (-s)}  N^{\beta t}= K^{-1}\lt^{\beta -s }\nt^{\beta -s }  N^{\beta t}
\]
and 
\[
\| \otp \|_{\dot H^\beta } \lec \| \otp \|_{\dot H^1}^{2-\beta } \| \otp \|_{\dot H^2}^{\beta -1 } \sim_{P} K^{-1}  \lt^{\beta -s }\nt^{\beta -s }  N^{\beta t}
\]
for $\beta >1$. This, and the fourth inequality in \eqref{pert_ests} show that
\[
\| \tp \|_{\dot H^\beta } \sim \lambda^{B(2-\eta )(\beta -s) +\frac{2-s}{s-1}\beta t } (\log \lambda )^{-\frac{\beta t }{s-1}} K^{-1-\frac{\beta t}{s-1} }
\]
for $\beta \in (1,2)$, where we also used the relationship \eqref{lambdaN}, $\lambda^{2-s}= N^{s-1} K  \log N$, to deduce that 
\[
(2-s) \log \lambda \leq \log N \leq \frac{2-s}{s-1} \log \lambda
\]
for sufficiently large $\lambda$.  We can now prove Lemma~\ref{L_pert}.

\begin{proof}[Proof of Lemma~\ref{L_pert}.]
As in the proof of Lemma~\ref{L_bg} we first assume that $\| \ttp \|_{C^{1,\gamma }} \lec 1$ on $[0,T]$, and the resulting estimates will follow from below computations using a continuity argument.

 We see that $\ttp$ satisfies the PDE
\eqnb\label{eq_ttp}\begin{split}
\p_t \ttp  &= - v[\tb + \tp  ] \cdot \nabla \tp - v[\tp ] \cdot \nabla \tb + \tv  \cdot \nabla \otp  \\
& =-v[\tb ] \cdot \nabla \ttp - v[\tp ]\cdot \nabla \tp - v[\tp ] \cdot \nabla \tb -  \left(  v[\tb  ] - \tv  \right) \cdot \nabla \otp  \\
&= -v[\tb ] \cdot \nabla \ttp - v[\ttp+ \otp  ]\cdot \nabla (\ttp + \otp ) - v[\ttp ] \cdot \nabla \tb -  \left(  v[\tb  ] - \tv  \right) \cdot \nabla \otp - v[\otp ]\cdot \na \tb \\
&= -v[\tb ] \cdot \nabla \ttp - v[\ttp ] \cdot \left( \na \ttp + \na \otp +\na \tb \right) - v[\otp ] \cdot \left( \na \ttp + \na \otp \right) -  \left(  v[\tb  ] - \tv  \right) \cdot \nabla \otp - v[\otp ]\cdot \na \tb \\
&= -v[\tb ] \cdot \nabla \ttp - v[\ttp ] \cdot \na \ttp - v[\otp ] \cdot \na \ttp - v[ \ttp ] \cdot \na \otp - v[ \otp  ]\cdot \nabla  \otp  - v[\ttp ] \cdot \nabla \tb -  \left(  v[\tb  ] - \tv  \right) \cdot \nabla \otp  - v[\otp ]\cdot \na \tb
\end{split}
\eqne
with homogeneous initial condition. We have 
\[
\begin{split}
\frac{\d }{\d t} \| \ttp \|_{L^\infty } 
&\lec 
\| v [\ttp ] \|_{L^\infty } \| \nabla \otp + \nabla \tb \|_{L^\infty } 
+
\| v [\otp ] \cdot  \nabla \otp  \|_{L^\infty } 
+ 
\| v[\tb ] - \widetilde{v} \|_{L^\infty (\supp\, \otp )} \| \nabla \otp \|_{L^\infty }+ \| v [\otp ] \|_{L^\infty } \| \tb \|_{C^1} \\
&\lec 
\| \ttp \|_{L^\infty } \left( \lt^{2-s} \nt^{1-s} N^{t} +  \log N \right) 
\\
&\hspace{1cm}+ C_{K,P}\lambda^c  \left(
\lt^{3-2s} \nt^{\epsilon-2s} N^{(5 + \epsilon )t} 
+ 
   \lt^{-2 }\cdot \lt^{2-s}\nt^{1-s} N^t+  \lt^{1-s} \nt^{-s} \log (\lt \nt ) \right) \\
&\lec 
\| \ttp \|_{L^\infty } \left( \lambda^c \lt^{3-2s+\eta(s-1)}  +  \log N \right) 
+ C_{K,P}
\lambda^c \lt^{3-4s+2s\eta+\epsilon (1-\eta )}  
 + 
  \lambda^c \lt^{1 -2s + \eta (s-1) } \\
&\lec 
\| \ttp \|_{L^\infty }    \log N + C_{K,P}
\lambda^c \lt^{1 -2s + \eta (s-1) }
\end{split}
\]
for all $t\in [0,T]$, where we used \eqref{otp_holder_norms}, \eqref{otb_Cka_est} \eqref{ThetaC1}, \eqref{333} and \eqref{vtilde_err_Linfty} in the second line and we recalled \eqref{lt_vs_nt} in the third line.
Thus, by the ODE fact \eqref{ode_fact}
\eqnb\label{ttp_Linfty}
\| \ttp \|_{L^\infty } \lec_{K,P} \lambda^c \lt^{1 -2s + \eta (s-1) }
\eqne
and a similar estimate holds for $\| v [\ttp ]\|_{L^\infty}$. Furthermore, for $a\in [0,1)$,
\[
\begin{split}
\frac{\d }{\d t}& \| \ttp \|_{C^{1,a } }
\lec 
\| v[\tb ] \|_{C^1 } \| \ttp \|_{C^{1,a}} + \| v[\tb ] \|_{C^{1,a} } \| \ttp \|_{C^1} \\
&\hspace{0.5cm}+
\| v [\ttp ] \|_{C^1 } \| \ttp \|_{C^{1,a}}+\| v [\ttp ] \|_{L^\infty } \| \nabla \otp + \nabla \tb \|_{C^{1,a} }  +  \| v [\ttp ] \|_{C^{1,a} } \left( \| \ttp \|_{C^1} +\| \otp +  \tb \|_{C^{1} } \right) \\
&\hspace{0.5cm}+
\| v[\otp ] \|_{C^1} \| \ttp \|_{C^{1,a}} + \| v[\otp ] \|_{C^{1,a} } \| \ttp \|_{C^{1}}  \\
&\hspace{0.5cm}+
\| v [\otp ] \cdot  \nabla \otp  \|_{C^{1,a} } \\
&\hspace{0.5cm}+ 
\| v[\tb ] - \widetilde{v} \|_{L^\infty (\supp\, \otp )} \|  \otp \|_{C^{2,a} }+ \| v[\tb ] - \widetilde{v} \|_{C^{1,a} (\supp\, \otp )} \|  \otp \|_{C^{1} } \\
&\lec \| v[\tb ] \|_{C^1 } \| \ttp \|_{C^{1,a}} +\| v[\tb ] \|_{C^{1,a} }  \| \ttp \|_{C^1} \\
&\hspace{0.5cm}+ 
\| v [\ttp ] \|_{C^1 }  \| \ttp \|_{C^{1,a}} +C_{K,P} \lambda^c \lt^{1 -2s + \eta (s-1)} \cdot  \lt^{3+a-s} \nt^{2+a-s} + \| v[ \ttp ] \|_{C^{1,a}} \left( \| \ttp \|_{C^1}+  \lambda^c \lt^{2-s} \nt^{1-s}\right) \\
&\hspace{0.5cm}+
 \lambda^c  \lt^{2-s}\nt^{1-s} \log (\lt \nt) \| \ttp \|_{C^{1,a}} + \lambda^c \lt^{2+a-s} \nt^{1+a-s}\log (\lt \nt ) \| \ttp \|_{C^1} \\
&\hspace{0.5cm}+ C_{K,P} \lambda^c \left(  \lt^{4+a-2s} \nt^{1+a+\epsilon -2s} +\lt^{-2} \cdot \lt^{3+a-s} \nt^{2+a-s} +  \lt^{a-1} \cdot \lt^{2-s} \nt^{1-s} \right) \\
&\lec \left( \| v[\tb ] \|_{C^1 } + \| v[\ttp ]\|_{C^1} \right) \| \ttp \|_{C^{1,a}} +\left( \| v[\tb ] \|_{C^{1,a} } +\| v[\ttp ] \|_{C^{1,a} } + \lambda^c \lt^{2+a-s} \nt^{1+a-s} \log (\lt \nt ) \right) \| \ttp \|_{C^1} \\
&\hspace{0.5cm}+ \| \ttp \|_{C^{1,a}}  \lambda^c \lt^{3-2s + \eta(s-1)} \\
&\hspace{0.5cm}+ C_{K,P}\lambda^c \left( \lt^{6+2a -4s + \eta(2s-3-a)} + \lt^{3+2a-2s + \eta(s-1-a)} + \lt^{5+2a-4s +\epsilon + \eta(2s-1-a-\epsilon )} \right.\\
&\hspace{0.5cm}\left.+ \lt^{3+2a-2s + \eta(s-2-a)}+ \lt^{2+a-2s + \eta(s-1)} \right)\\
&\lec \left( \| v[\tb ] \|_{C^1 } + \| v[\ttp ]\|_{C^1} \right) \| \ttp \|_{C^{1,a}} +\left( \| v[\tb ] \|_{C^{1,a} } +\| v[\ttp ] \|_{C^{1,a} } + \lambda^c \lt^{3+2a-2s+\eta(s-1-a)}\log \lt \right) \| \ttp \|_{C^1} \\
&\hspace{0.5cm}+ \| \ttp \|_{C^{1,a}}  \lambda^c \lt^{3-2s + \eta(s-1)} +C_{K,P} \lambda^c  \lt^{3+2a-2s + \eta(s-1-a)} ,
\end{split}
\]
where we used the fact that $N^{ct} \lec \lambda^c $ (recall $t\leq T$), \eqref{ttp_Linfty}, \eqref{otp_holder_norms}, \eqref{votp_holder_norms}, \eqref{333} in the second inequality, as well as \eqref{eps_cond} in the form $5+2a-4s+\epsilon + \eta < 3+2a-2s$ in the last inequality. Thus, for 
$a=0$ we obtain 
\[
\frac{\d }{\d t } \| \ttp \|_{C^1} \lec \| \ttp \|_{C^1} \log N + C_{K,P}\lambda^c  \lt^{3-2s + \eta(s-1)}
\]
(as long as $\| v [\ttp ]\|_{C^{1,\gamma }}$ remains bounded). Thus the ODE fact~\eqref{ode_fact} implies that
\eqnb\label{ttp_C1}
\| \ttp \|_{C^1} \lec_{K,P} \lambda^c  \lt^{3-2s + \eta(s-1)}
\eqne
for $t\in [0,T]$. On the other hand, for $a\in (0,1)$, we get
\[
\begin{split}
\frac{\d }{\d t } \| \ttp \|_{C^{1,a}} &\lec  \| \ttp \|_{C^{1,a}}\left( \log N+ \lambda^c \lt^{3-2s+ \eta (s-1)} \right) \\
&+C_{K,P} \lambda^c \left( \lt^{3-2s+\eta (s-1)} + \lt^{6+2a-4s + \eta (2s -2-a)} \log \lt + \lt^{3+2a-2s + \eta (s-1-a)} \right) \\
& \lec \| \ttp \|_{C^{1,a}} \lambda^c \lt^{3-2s+2 \eta } +C_{K,P} \lambda^c  \lt^{3+2a-2s+2\eta },
\end{split}
\]
and so, by \eqref{choice_eta},
\[
\| \ttp \|_{C^{1,a}}  \lec_{K,P}\lambda^c  \lt^{3+2a-2s+2\eta }
\]
for $t\in [0,T]$, as required, where in the above computation the implicit constant in ``$\lec$'' does  not depend on $K,P$. 

By \eqref{choice_gamma}--\eqref{choice_eta} we thus have the $C^{1,\gamma}$ norm under control, and so we can now estimate the $H^s$ error.
\[
\begin{split}
\frac{\d }{\d t} \| \ttp \|_{H^k} &\lec \| v[\tb ] \|_{C^1 } \| \ttp \|_{H^{k}} + \| v[\tb ] \|_{H^k } \| \ttp \|_{C^1} \\
&+
\| v [\ttp ] \|_{C^1 }\| \ttp \|_{H^k}+\| v [\ttp ] \|_{L^\infty } \|  \otp + \tb \|_{H^{k+1} }  +  \| v [\ttp ] \|_{H^k } \left( \| \ttp \|_{C^1} +\| \otp +  \tb \|_{C^{1} } \right) \\
&+
\| v[\otp ] \|_{C^1 } \| \ttp \|_{H^k} + \| v[\otp ] \|_{H^k  } \| \ttp \|_{C^{1}}  \\
&+
\| v [\otp ] \cdot  \nabla \otp  \|_{H^k  } \\
&+ 
\| v[\tb ] - \widetilde{v} \|_{L^\infty (\supp\, \otp )} \|  \otp \|_{H^{k+1} }+ \| v[\tb ] - \widetilde{v} \|_{H^k (\supp\, \otp )} \|  \otp \|_{C^{1} } \\
&\hspace{-1cm}\lec  \| \ttp \|_{H^k}\log N + C_{K,P} \left( \lambda^c \lt^{3-2s + \eta(s-1)}\right.\\
&+\lambda^c \lt^{3-2s+\eta(s-1)} \| \ttp \|_{H^k}+ \lambda^c \lt^{1 -2s + \eta (s-1)}  \cdot \lt^{k+1-s}\nt^{k+1-s}  + \| \ttp \|_{H^k} \lambda^c \left( \lt^{3-2s+\eta(s-1)} + \lt^{2-s}\nt^{1-s} \right)\\
&+\lt^{2-s}\nt^{1-s} \log (\lt \nt )\| \ttp \|_{H^k} + \lt^{k-s}\nt^{k-s} \cdot \lambda^c \lt^{3-2s+\eta (s-1)}\\
&+ \lambda^c \lt^{k+2-2s}\nt^{k+\varepsilon-2s}\\
&+\left.\lambda^c \lt^{-2} \cdot (\lt \nt )^{k+1-s} + \lambda^c \lt^{-1} \cdot \lt^{2-s}\nt^{1-s}\right)\\
&\hspace{-1cm}\leq \| \ttp \|_{H^k} \log N + C_{K,P}\lambda^c \left( \lt^{3-2s + \eta (s-1) } + \lt^{k+2-3s+\eta(s-1) }\nt^{k+1-s} \right.\\
&\hspace{5cm}\left.+\lt^{k+3-3s+\eta(s-1) }\nt^{k-s} + \lt^{k+2-2s} \nt^{k-2s} + \lt^{k-1-s} \nt^{k+1-s}  \right) \\
&\hspace{-1cm}=\| \ttp \|_{H^k} \log N + C_{K,P}\lambda^c \left( \lt^{3-2s + \eta (s-1) } + \lt^{2k+3-4s+\eta(2s-2-k) } \right.\\
&\hspace{5cm}\left.+\lt^{2k+3-4s+\eta(2s-1-k) } + \lt^{2k+2-4s + \eta(2s-k)} +\lt^{2k-2s+\eta (s-1-k)}  \right) \\ 
& \hspace{-1cm}\leq\| \ttp \|_{H^k} \log N +C_{K,P}\lambda^c \lt^{2k-2s+\eta (s-1-k)}  ,
\end{split}
\]
where we used \eqref{333}, \eqref{ttp_C1}. In particular
\[
\| \ttp \|_{H^2} \lec_{K,P} \lambda^c \lt^{4-2s+\eta(s-3) } \qquad \text{ and }\qquad  \| \ttp \|_{H^1} \lec_{K,P} \lambda^c \lt^{2-2s+\eta(s-2) }.
\]
Thus, by interpolation
\eqnb\label{byinter}
\| \ttp \|_{H^s} \lec \| \ttp \|_{H^1}^{2-s} \| \ttp \|_{H^2}^{s-1} \lec_{K,P} \lambda^c \lt^{-\eta},
\eqne
as required. Similarly, we obtain that $\| \ttp \|_{H^4} \lec_{K,P} \lt^c$ for some $c>0$.
\end{proof}

We can now conclude the proof of Theorem~\ref{T00}. We first let $\varepsilon>0$ and $T\in (0,1)$ be given by Lemma~\ref{L_bg} and we take $\lambda$ large enough so that the estimates in Lemmas~\ref{L_bg} and \ref{L_pert} are valid on $[0,T]$. The estimates \eqref{black_box_claims} and \eqref{black_box_Hb} then follow from the control of the background and perturbation errors in Lemmas~\ref{L_bg} and \ref{L_pert} and the size of the pseudosolutions $\otb$, $\otp$ in the corresponding norms. 

\section{Continuous loss of regularity}\label{sec_pf_main}
Here we prove the main result, Theorem~\ref{T01}.

\subsection{Existence and uniqueness lemmas}

Here we discuss two lemmas that will help us with the gluing process before we can proceed to the final construction.

The first lemma is concerned with a given solution $\theta$ to the SQG equation \eqref{sqg} that is supported away from the origin, and shows that one can glue $\theta_{K,\lambda}$ to it, provided $\lambda>0$ is sufficiently large, where $\theta_{K,\lambda}$ is the norm inflation solution provided by Theorem~\ref{T00}.

\begin{lem}[Existence lemma]\label{existence}
Let $s\in(\frac{3}{2},2)$, $P\in \N$, $K>1$, and let $\lambda>0$ be sufficiently large. Let $\theta_{\lambda}$ denote the solution to the SQG equation on $[0,T_0]$ given by Theorem~\ref{T00} (where $T_0>0$ is given by Theorem~\ref{T00}), and let $\theta$ be another solution to the SQG equation \eqref{sqg} such that $\theta$ is odd-odd symmetric, $P$-fold symmetric, smooth for $(x,t)\in \R^2 \times [0,T_0]$  with compact support $\supp(\theta(\cdot ,t))\subset B_{R}\setminus B_{r}$ for some $R>r>0$.\\

    If $P$ is sufficiently large (depending on $s$ only) and $\lambda\geq 1$ is sufficiently large (depending on $K$, $P$,$s$, $\theta$ and $T_{0}$) then the unique local-in-time smooth solution $\theta_{\rm new}$ to SQG \eqref{sqg} with initial conditions $\theta(x,0)+\theta_{\lambda}(x,0)$ exists until $T_0$, and
    $$\supp\, \theta_{\rm new} \subset  B_{R+\lambda^{-\frac12}}\setminus B_{\lambda^{-1/2}/4} \qquad \text{ and }\qquad  \|\theta+\theta_{\lambda}-\theta_{\rm new}\|_{H^3}\leq \lambda^{-1}$$
    for all $t\in [0,T_0]$.
\end{lem}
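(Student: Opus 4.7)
The plan is to write $\theta_{\rm new}=\theta+\theta_{K,\lambda}+E$ with error $E$ vanishing at $t=0$, and to prove smallness of $E$ in $H^3$ on $[0,T_0]$ by a bootstrap energy estimate, controlling supports in parallel by trajectory comparison. Local smooth existence of $\theta_{\rm new}$ is standard since the initial datum lies in $H^\infty$; the whole content of the proof is the a priori estimate that continues the solution up to $T_0$. Subtracting the SQG equations for $\theta$ and $\theta_{K,\lambda}$ from that for $\theta_{\rm new}$ yields
\[
\p_t E+v[\theta_{\rm new}]\cdot\na E+v[E]\cdot\na(\theta+\theta_{K,\lambda})=-v[\theta_{K,\lambda}]\cdot\na\theta-v[\theta]\cdot\na\theta_{K,\lambda},\qquad E(\cdot,0)=0.
\]
The argument reduces to the fact that the right-hand side is super-algebraically small in $H^3$, which comes from two independent mechanisms exploiting the disjoint supports $\supp\,\theta_{K,\lambda}\subset B_{2\lambda^{-1/2}}$ and $\supp\,\theta\subset B_R\setminus B_r$.

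For $v[\theta_{K,\lambda}]\cdot\na\theta$ I would localize via a cutoff supported on $\{|x|\geq r/2\}$, on which $\theta_{K,\lambda}$ has no support; differentiating the Biot--Savart kernel under the integral gives $\|v[\theta_{K,\lambda}]\|_{C^k(\{|x|\geq r/2\})}\lec_{k,r}\|\theta_{K,\lambda}\|_{L^1}\lec \lambda^{-1}\cdot \lambda^{-1}=\lambda^{-2}$ for each $k\geq 0$ (using Theorem~\ref{T00}), so $\|v[\theta_{K,\lambda}]\cdot\na\theta\|_{H^3}\lec_\theta\lambda^{-2}$. For $v[\theta]\cdot\na\theta_{K,\lambda}$ the $P$-fold and odd-odd symmetry of $\theta$ is essential: Lemma~\ref{decay} forces all partial derivatives of $v[\theta]$ of order less than $P-1$ to vanish at the origin, so its Taylor expansion yields
\[
\|v[\theta]\|_{C^k(\supp\,\theta_{K,\lambda})}\lec_\theta\lambda^{-(P-1-k)/2},\qquad k\leq P-1,
\]
and combining with $\|\theta_{K,\lambda}\|_{H^4}\lec \lambda^{c_4}$ from Theorem~\ref{T00} gives, via Moser product estimates, $\|v[\theta]\cdot\na\theta_{K,\lambda}\|_{H^3}\lec_\theta \lambda^{-(P-1)/2+c_4}$. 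Since $c_4$ depends only on $s$, choosing $P$ large in terms of $s$ alone makes $\|\mathrm{RHS}\|_{H^3}\lec\lambda^{-A}$ with $A$ as large as desired.

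I would then close the bootstrap $\|E\|_{H^3}\leq 1$ by climbing the Sobolev scale. The $L^2$ estimate reads
\[
\ddt\|E\|_{L^2}^2\lec \|\na(\theta+\theta_{K,\lambda})\|_{L^\infty}\|E\|_{L^2}^2+\|\mathrm{RHS}\|_{L^2}\|E\|_{L^2}\lec (\log\lambda)\|E\|_{L^2}^2+\lambda^{-A}\|E\|_{L^2},
\]
and Gronwall gives $\|E(t)\|_{L^2}\lec \lambda^{-A+CT_0}$; one then bootstraps through $H^1,H^2,H^3$, at each step using Sobolev embedding $\|v[E]\|_{L^\infty}\lec\|E\|_{H^{1+\eps}}$ together with the already-known lower-order smallness of $E$ to tame the only problematic term $v[E]\cdot\na\theta_{K,\lambda}$ (which otherwise would introduce the factor $\|\theta_{K,\lambda}\|_{H^4}\sim \lambda^{c_4}$), arriving at $\|E(t)\|_{H^3}\leq \lambda^{-1}$ on $[0,T_0]$ for $P,\lambda$ large. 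This closes the bootstrap and extends $\theta_{\rm new}$ up to $T_0$. The support inclusions then follow from trajectory comparison under the flow $\Phi$ of $v[\theta_{\rm new}]$: on $\supp\,\theta$ the perturbation $v[\theta_{K,\lambda}]+v[E]$ is $\lec \lambda^{-1}$ in $L^\infty$ while the $v[\theta]$-flow preserves $\supp\,\theta\subset B_R\setminus B_r$ by hypothesis, giving drift $\lec \lambda^{-1}e^{C(\theta)T_0}\ll \lambda^{-1/2}$; on $\supp\,\theta_{K,\lambda}$ the perturbing velocity $v[\theta]+v[E]$ has size $\lec \lambda^{-(P-1)/2}$ by the decay lemma, and since Theorem~\ref{T00} preserves $\supp\,\theta_{K,\lambda}\subset B_{2\lambda^{-1/2}}\setminus B_{\lambda^{-1/2}/2}$ along the $v[\theta_{K,\lambda}]$-flow, Lemma~\ref{L_approach} applied to the perturbing velocity yields drift at most $\lambda^{-(P-1)/2+c_6T_0}\ll \lambda^{-1/2}/4$.

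The main obstacle is the factor $\exp(c_6T_0\log\lambda)=\lambda^{c_6T_0}$ arising from the $C^1$-bound $\|u[\theta_{K,\lambda}]\|_{C^1}\leq c_6\log\lambda$ of Theorem~\ref{T00}, which appears both in the Gronwall exponent of the energy estimate and in the trajectory drift bound; since $c_6$ and $T_0$ depend only on $s$, it must be beaten by the algebraic smallness $\lambda^{-(P-1)/2}$ from Lemma~\ref{decay}, which fixes the required growth of $P$ with respect to $s$ (essentially $(P-1)/2>c_4+c_6T_0+1$). After $P$ is so chosen, all remaining smallness is purchased by taking $\lambda$ large in terms of $\theta,K,P,s,T_0$.
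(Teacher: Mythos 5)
Your proposal is essentially correct, but it takes a genuinely different route from the paper at the level of the decomposition. The paper splits $\theta_{\rm new} = \widetilde{\theta} + \widetilde{\theta}_{K,\lambda}$ into two separately transported pieces (each advected by $u[\theta_{\rm new}]$, with disjoint supports), and estimates $\Theta = \theta - \widetilde\theta$ and $\Theta_{K,\lambda} = \theta_{K,\lambda} - \widetilde\theta_{K,\lambda}$ separately; for the inner piece it goes through $L^2$, interpolates against the bootstrap $H^3$ bound, and then closes in $H^3$. You instead estimate the single global error $E = \theta_{\rm new} - \theta - \theta_{K,\lambda}$ via the equation
\[
\p_t E + v[\theta_{\rm new}]\cdot\na E + v[E]\cdot\na(\theta+\theta_{K,\lambda}) = -v[\theta_{K,\lambda}]\cdot\na\theta - v[\theta]\cdot\na\theta_{K,\lambda},
\]
whose right-hand side is made super-algebraically small exactly as in the paper (disjoint supports for one term, the $P$-fold decay Lemma~\ref{decay} for the other). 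The paper's split has the organizational advantage that $\Theta$ and $\Theta_{K,\lambda}$ are essentially localized, so each can be treated in the regime that is natural for it; your version avoids the split at the cost of carrying the $\lambda^{c_4}$ factor from $\|\theta_{K,\lambda}\|_{H^4}$ inside a single energy estimate, which you then tame by interpolating the very small $\|E\|_{L^2}$ against the bootstrap $\|E\|_{H^3}\leq 1$ bound. Both mechanisms work, and in both cases the crucial balance is the same: the $\exp(CT_0\log\lambda)$ Gronwall factor and the $\lambda^{c_4}$ factor must be beaten by $\lambda^{-(P-C)/2}$, which fixes $P$ in terms of $s$ only.

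One imprecision worth fixing: in the support argument for the inner piece you claim the perturbing velocity $v[\theta]+v[E]$ on $\supp\,\theta_{K,\lambda}$ has size $\lec\lambda^{-(P-1)/2}$ and invoke Lemma~\ref{L_approach}. That is not quite right: $v[E]$ is only controlled by $\|E\|_{H^3}\lec\lambda^{-1}$, not by any power coming from the decay lemma, so the perturbing velocity is only $\lec\lambda^{-1}$ (which is still fine). Moreover, Lemma~\ref{L_approach} applied to $v[\theta_{\rm new}]$ gives $|\widetilde\phi(x,t)|\geq e^{-c_6 T_0\log\lambda}|x|\sim\lambda^{-c_6 T_0}|x|$, which is not bounded below by $|x|/2$ for a fixed exponent $c_6T_0$, so it does not by itself give the stated inner bound $B_{\lambda^{-1/2}/4}^c$. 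The correct, and simpler, route (and the one the paper takes) is to bound the \emph{total} velocity $|u[\theta_{\rm new}]|$ on $\supp\,\theta_{K,\lambda}$ by $C_P\lambda^{-1}\log\lambda$ (via Lemma~\ref{decay} for the outer contribution and the log-Lipschitz estimate $\|u[\theta_{K,\lambda}]\|_{L^\infty}\lec\|\theta_{K,\lambda}\|_{L^\infty}\log(2+\|\theta_{K,\lambda}\|_{C^1})$ for the inner one), and then simply integrate in time: the drift $\lec T_0\lambda^{-1}\log\lambda\ll\lambda^{-1/2}/8$ keeps $\supp\,\theta_{K,\lambda}$-trajectories inside $B_{3\lambda^{-1/2}}\setminus B_{3\lambda^{-1/2}/8}$. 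With that correction your argument closes.
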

\begin{proof}
We will keep the dependence on the values of $K,s$ and $T_{0}$ implicit, as well as the dependence on $\theta$, keeping only the dependence on $P$ explicit (so all the constants involved should be $C_{K,s,T_{0},\theta}$, but we will omit the subindices for simplicity).

We first note that, for small times $\theta_{\rm new}$ can be decomposed into the sum of two disjointly supported pieces, one approximating $\theta$ and the other $\theta_{K,\lambda}$.  Namely, by a continuity argument, we have that
$$\theta_{\rm new}=\widetilde{\theta}+\widetilde{\theta}_{\lambda}$$
for some $t>0$. Let $T\leq T_0$ be the largest time such that the above is true, together with quantitative estimates
 \eqnb\label{theta_003}
 \|\theta -\widetilde{\theta}\|_{H^3},\|\theta_{\lambda}-\tilde{\theta}_{\lambda}\|_{H^3}< \frac{\lambda^{-1}}{2}
 \eqne
    and
    \eqnb\label{theta_suppa}
    \begin{split}
\frac{r}2  \leq \left| \widetilde{\phi }(x,t) \right|  &\leq R+\lambda^{-1/2}   \hspace{2cm}\text{ for } x\in \supp \theta (\cdot ,0) \qquad \text{ and }\\
 \lambda^{-1/2}/4   \leq \left| \widetilde{\phi }(x,t) \right| & \leq 4\lambda^{-1/2}  \hspace{2.6cm} \text{ for }\lambda^{-1/2}/2\leq |x|\leq 2 \lambda^{-1/2}
 \end{split}
    \eqne
for all $t\in [0,T]$, where $\widetilde{\phi}$ denotes the Legrangian trajectory of $v[\theta_{\rm new}]$, i.e. the solution to
\[\p_{t}\widetilde{\phi}=v[ \theta_{\rm new} ]\circ \widetilde{\phi},\qquad \widetilde{\phi}(x,0)=x.\]
Note that $T>0$ by continuity, and that \eqref{theta_supps} implies that 
\eqnb\label{theta_supps}\text{supp}\,\widetilde{\theta}\subset B_{R+\lambda^{-\frac12}}\setminus B_{{r}/{2}},\qquad  \text{supp}\,\widetilde{\theta}_{\lambda}\subset B_{4\lambda^{-\frac{1}{2}}}\setminus B_{\lambda^{-\frac{1}{2}}/4}
    \eqne
for all $t\in [0,T]$, due to the transport structure of the SQG equation.\\

If $T= T_0$ then the claim of the lemma follows. Thus, let us suppose that $T<T_0$. We show below that, for $t\in [0,T]$, properties \eqref{theta_003}--\eqref{theta_suppa} hold with, repectively, strictly smaller right-hand side and strictly smaller restrictions on $|\widetilde{\phi}|$.   This contradicts the definition of $T$, and so finishes the proof. \\

For every $t\in [0,T]$ both $\widetilde{\theta}$ and $\widetilde{\theta}_{\lambda}$ are advected by $v[\theta_{\rm new}]=v[\widetilde{\theta}+\widetilde{\theta}_{\lambda}]$, i.e. they satisfy the equations
   \[
    \begin{split} \p_{t}\widetilde{\theta}+v[\widetilde{\theta}+\widetilde{\theta}_{\lambda}] \cdot\nabla \widetilde{\theta}&=0,\qquad \widetilde{\theta}(x,0)=\theta(x,0),\\
    \p_{t}\widetilde{\theta}_{\lambda}+v[\widetilde{\theta}+\widetilde{\theta}_{\lambda}]\cdot\nabla \widetilde{\theta}_{\lambda}&=0,\qquad \widetilde{\theta}_{\lambda}(x,0)=\theta_{\lambda}(x,0).
    \end{split}
    \]    

From the evolution equation for $\Theta\coloneqq \theta-\tilde{\theta}$ we get
    \begin{align*}
        \p_{t}\|\Theta\|_{H^3}\leq C\|\Theta\|^2_{H^3}+\|\Theta\|_{H^3}\|\theta\|_{H^4}+\|\Theta\|_{H^3}\|v[\tilde{\theta}_{\lambda}]\|_{C^{4}(\R^2\setminus B_{\frac{r}{2}})(0)}+\|\theta\|_{H^4}\|v[\tilde{\theta}_{\lambda}]\|_{C^{3}(\R^2\setminus B_{\frac{r}{2}})(0)}.
    \end{align*}
  Taking $\lambda$ big enough so that $\supp\, \widetilde{\theta}_{\lambda}\subset B_{{r}/{4}}$,
\eqnb\label{temp_003}\|v[\tilde{\theta}_{\lambda}]\|_{C^4(B_{{r}/{2}}^c)}\leq C\|\tilde{\theta}_{\lambda}\|_{L^1}\leq C_{P}\lambda^{-2}.
\eqne
   Thus, since $\|\Theta\|_{H^3}<\frac{\lambda^{-1}}{2} \leq 1$ for $t\in [0,T)$ and $\| \theta \|_{H^4}\leq C$, we obtain
    \begin{align*}
        \p_{t}\|\Theta\|_{H^3}\leq C\|\Theta\|_{H^3}+C_{P}\lambda^{-2}.
    \end{align*}
    Hence, using the ODE fact \eqref{ode_fact},
    \eqnb\label{theta001}
    \|\Theta\|_{H^3}\leq C_{P}\lambda^{-2}\leq \frac{\lambda^{-1}}{2}
    \eqne
    for $t\in [0,T]$, if $\lambda$ is sufficiently large.
    
    As for $\Theta_{\lambda}\coloneqq \theta_{\lambda } - \tt_{\lambda} $, we start by obtaining bounds in $L^2$, where we get
    \eqnb\label{theta_002}
    \frac{\d}{\d t} \|\Theta_{\lambda}\|_{L^2}\leq \|\Theta_{\lambda}\|_{L^2}\|\tilde{\theta}_{\lambda}\|_{C^1}+\|v[\tilde{\theta}]\cdot\nabla \tilde{\theta}_{\lambda}\|_{L^2}.
    \eqne
    
 Since $\widetilde{\theta}$ and $\widetilde{\theta}_{\lambda}$ are $P$-fold symmetric (as both initial conditions $\theta(\cdot ,0)$ and $\theta_{\lambda}(\cdot ,0)$ are), Lemma~\ref{decay} gives that
    $$D^J v[\widetilde{\theta}](0,t)=0 \qquad \text{ if }J\leq P-1.$$

    Furthermore, recalling \eqref{theta_supps},
    $$\|v[\tt]\|_{C^{J}(B_{4\lambda^{-{1}/{2}}})}\lec_{J} \|\tt \|_{L^1}\lec 1.$$
    This, and the Taylor expansions at $x=0$ of $v[\widetilde{\theta}]$ and its fourth order partial derivatives give
  \eqnb\label{theta_007}\|v[\tt]\|_{C^{4}(B_{4\lambda^{-{1}/{2}}})}\leq C \lambda^{\frac{-P+5}{2}}
  \eqne
    for $t\in [0,T)$. Applying this in \eqref{theta_002}  and using the bounds for the $C^1$ norm of $\widetilde{\theta}_{\lambda}$ (recall~\eqref{black_box_claims}), which in particular, due to the smallness of the support, give bounds in $H^1$, gives
    $$\frac{\d}{\d t} \|\Theta_{\lambda}\|_{L^2}\leq C\log \lambda  \|\Theta_{\lambda}\|_{L^2}+C_{P}\lambda^{\frac{-P+5}{2}} \log \lambda ,$$
    so that (recall the ODE fact \eqref{ode_fact})
    \eqnb\label{hereisP}
    \|\Theta_{\lambda}\|_{L^2}\leq C_{P}\ee^{Ct\log \lambda } \lambda^{\frac{-P+5}{2}}\log \lambda \lec C_{P}\lambda^{\frac{-P+C}{2}}.
    \eqne
 Interpolation between this and the assumed $H^3$ bound \eqref{theta_003} gives 
    $$\|\Theta_{\lambda}\|_{H^{\beta}}\leq C_{P}\lambda^{\frac{(3-\beta)}{3}\frac{(-P+C)}{2}}.$$
    for  $\beta\in[0,3]$, $t\in [0,T]$.
    This lets us improve the $H^3$ bound, since 
    \begin{align*}
        \frac{\d }{\d t}\|\Theta_{\lambda}\|_{H^3}&\lec \|\Theta_{\lambda}\|^2_{H^3}+\sum_{j=0}^{3}\|\Theta_{\lambda}\|_{H^j}\|\theta_{\lambda}\|_{C^{4-j}}\\
        &+\sum_{j=0}^{3}\|\Theta_{\lambda}\|_{H^j}\left(\|v[\theta_{\lambda}]\|_{C^{4-j}}+\|v[ \tilde{\theta}]\|_{C^{4-j}(B_{4\lambda^{-\frac{1}{2}}}(0))})+\|v[\tilde{\theta}]\cdot\nabla \theta_{\lambda} \|_{H^{3}}\right)\\
        &\lec  (1+ \log \lambda )\|\Theta_{\lambda}\|_{H^3}+C_{P}\lambda^{\frac{-P+C}{6}},
    \end{align*}
    so that 
     \[
     \|\Theta_{\lambda}\|_{H^3}\leq  C_{P}\lambda^{\frac{-P+C}{6}}
     \]
     for $t\in [0,T]$. Hence, we can take $P$  big enough so that
    \eqnb\label{theta_006}
    \|\Theta_{\lambda}\|_{H^3}\leq C_{P}\lambda^{-2} \leq \frac{\lambda^{-1}}{2}
    \eqne
for sufficiently large $\lambda$.

As for  \eqref{theta_suppa}, we let $\phi$ denote the Lagrangian trajectory of $v[\theta ]$ and we note that 
 \[
 \begin{split}
 \p_{t}|\phi-\widetilde{\phi}|&\leq |\phi-\widetilde{\phi}|\|v[\theta]\|_{C^1(B_{r/2}^c)} + \|v[\theta-\theta_{\rm new}]\|_{L^\infty (B_{r/2}^c)}\\
 &\lec_\theta |\phi-\widetilde{\phi}| + C_{P}\lambda^{-2},\\
 \end{split}
 \]
 for each $x\in B_r^c$ and $t\in [0,T]$, where we used \eqref{theta001} in the form $\|v[\Theta ] \|_{L^\infty } \lec \|v[\Theta ] \|_{H^3 } \lec \| \Theta \|_{H^3} \lec  C_P \lambda^{-2} $ and  \eqref{temp_003}.
Hence, for large $\lambda$ 
\eqnb\label{theta_004}
|\phi(x,t)-\widetilde{\phi}(x,t)| \leq \lambda^{-1} 
\eqne
for $x\in \supp\, \theta (\cdot , 0)$, $t\in [0,T]$,  and so, since $|\phi(x,t)|\geq r$ for all such $x,t$ (by the assumptions on $\theta$), the first line of \eqref{theta_suppa} follows.

 For the second line of \eqref{theta_suppa}, we first note that
\[
\p_t \widetilde{\phi } = v[ \theta_{\rm new} ] (\widetilde{\phi }) = v[ \theta ] (\widetilde{\phi }) +  v[ \theta_\lambda ] (\widetilde{\phi }) +  v[ \widetilde{\theta} + \widetilde{\theta_\lambda} - (\theta + \theta_\lambda) ] (\widetilde{\phi }).
\]
Thus we can use the decay Lemma~\ref{decay} and \eqref{theta_003} to get 
 \[
 \begin{split}
 \p_{t}|\widetilde{\phi}|&\lec \|v[{\theta}]\|_{L^{\infty}(B_{4\lambda^{-{1}/{2}}})} + \| v[ \theta_\lambda ] \|_{L^\infty (B_{4\lambda^{-{1}/{2}}}) } + \| \widetilde{\theta} - \theta \|_{H^3} +\| \widetilde{\theta_\lambda} -  \theta_\lambda\|_{H^3} \\
 &\lec  C_{P,\theta} \lambda^{-\frac{P-1}2} +\|\theta_\lambda \|_{L^{\infty}}(2+\log \|\theta_\lambda \|_{C^1})+\lambda^{-1}\leq C_{P,\theta }\lambda^{-1}\log \lambda 
 \end{split}
 \]
 for each $x\in B_{2\lambda^{-1/2}}$, $t\in [0,T)$, where we also  recalled the $L^\infty$ and $C^1$ bounds on $\theta_\lambda$ (from \eqref{black_box_claims}) to write
 \[
 \|v[\theta_{\lambda}]\|_{L^{\infty}}\lec\| \theta_{\lambda}\|_{L^{\infty}}\log(2+\|\theta_{\lambda}\|_{C^1}) \lec \lambda^{-1} \log \lambda
 \]

 Hence
\[
| \widetilde{\phi } (x,t) - x| \lec_{C,\theta,T_0} \lambda^{-1}\log \lambda ,
\] 
and so, taking sufficiently large $\lambda$ we obtain the second line of \eqref{theta_suppa}. 
\end{proof}
We note that the above proof uses the choice of large $P$ only to control  $\Theta_{\lambda}$ and the support of $\widetilde{\theta}_{\lambda}$. For example, the proofs of \eqref{theta001} and \eqref{theta_004} do not require large $P$, and, from the assumed properties \eqref{theta_003}--\eqref{theta_supps},  they only use  that 
\eqnb\label{theta_01}
\supp\,\widetilde{\theta}_{\lambda}\subset B_{r/4}
\eqne
(recall that we used \eqref{temp_003}). Similarly, taking $P$ large enabled us (thanks to \eqref{theta_007}) to ensure that $\widetilde{\theta}_{\lambda}$ remained localized similarly to $\theta_{\lambda}$ (recall~\eqref{theta_supps}). This shows that, if we were able to guarantee \eqref{theta_01} using some other tool or assumption, then we would be able not only to control the support of the outer part $\widetilde{\theta}$ of the new solution $\theta_{\rm new}$ (i.e. the first claim of \eqref{theta_supps}), but also to ensure that 
\[
\| \theta_{\rm new} - \theta \|_{H^3(B_{r/2}^c)} < \lambda^{-1},
\]
as in \eqref{theta_003}. One way to guarantee \eqref{theta_01} is to impose an additional control of $\theta_{\rm new}$, for example that $\| \theta_{\rm new} \|_{H^{1+\alpha }}$ remains under control. Moreover, under such an assumption we can relax the assumptions on $\theta_{\lambda}$. For example, we no longer need it to be the norm inflation solution given by Theorem~\ref{T00}. In fact, we only need it to satisfy a transport equation with appropriate support control, since we are concerned with the control of the outer solution. The only requirement for $\theta_{\lambda}$ comes through its initial condition, where we would require the support included in $B_{2\lambda^{-1/2}}$ and the $L^\infty$ norm bounded  by $\lambda^{-1}$. We will now refer to such inner initial condition as $\theta_\lambda$. 

\begin{lemma}[Uniqueness lemma]\label{uniqueness}
 Let $P\geq 3$, and let $\theta$ be an odd-odd symmetric, $P$-fold symmetric, compactly supported and smooth solution to the SQG equation \eqref{sqg} on $[0,T_0]$ with $\supp \,\theta \subset B_r^c$, and let $\theta_{\rm new}$ be a solution to the SQG equations \eqref{sqg} (in the sense of Definition~\ref{definition_classical_sol}) satisfying 
 \eqnb\label{uni_ass} \| \theta_{\rm new} \|_{H^{1+\alpha}}\leq M
 \eqne
  for $t\in [0,T_0]$, for some $\alpha \in (0,1)$, $M>0$,   with initial condition
\[
\theta (\cdot ,0)+ \theta_\lambda ,
\] 
 where $\theta_\lambda$ is \emph{any initial condition} such that 
 \begin{enumerate}
% \item[(i)] $\theta_\lambda $ is odd-odd symmetric, % and $P$-fold symmetric,
 \item[(i)] $\supp\, \theta_\lambda \subset B_{2\lambda^{-1/2}}$, 
 \item[(ii)] $\| \theta_\lambda \|_\infty \leq 2\lambda^{-1}$.
\end{enumerate}
Then for every $\epsilon >0$ there exists $\lambda >0$ (large, depending on $T_0,\theta ,M,\epsilon,\alpha$) such that
     $$\|\theta_{\rm new}-\theta \|_{H^{3}(B_{{r}/{2}}^c)}\leq \epsilon $$
     for $t\in [0,T_0]$.
\end{lemma}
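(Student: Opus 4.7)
The plan is to emulate the bootstrap argument of Lemma~\ref{existence}, replacing the specific structural information on $\theta_{K,\lambda}$ used there by the $H^{1+\alpha}$ control \eqref{uni_ass}. First I would decompose $\theta_{\rm new}=\widetilde{\theta}+\widetilde{\theta}_\lambda$, where $\widetilde{\theta}$ and $\widetilde{\theta}_\lambda$ are the Lagrangian transports of $\theta(\cdot,0)$ and $\theta_\lambda$, respectively, along the flow of $u[\theta_{\rm new}]$; by linearity they recombine to $\theta_{\rm new}$. Let $T\in(0,T_0]$ be the largest time on which the bootstrap hypotheses
\[
\text{(a) }\supp\,\widetilde{\theta}_\lambda(\cdot,t)\subset B_{r/4},\quad \text{(b) }\supp\,\widetilde{\theta}(\cdot,t)\subset B_{r/2}^c,\quad \text{(c) }\|\widetilde{\theta}-\theta\|_{H^3(B_{r/2}^c)}\leq \epsilon
\]
all hold; the goal is to show $T=T_0$ by producing strictly improved versions of (a)--(c) on $[0,T)$ whenever $\lambda$ is sufficiently large.

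The key new ingredient is a $\lambda$-decaying H\"older bound on $\widetilde{\theta}_\lambda$. Under (b)--(c), $\widetilde{\theta}$ vanishes on $B_{r/2}$ and is $H^3$-close to the smooth background $\theta$ elsewhere, so $\|\widetilde{\theta}\|_{H^{1+\alpha}}\lec 1$; subtracting, $\|\widetilde{\theta}_\lambda\|_{H^{1+\alpha}}\lec M+1$. Combining this with the transport-preserved bound $\|\widetilde{\theta}_\lambda\|_{L^\infty}\leq 2\lambda^{-1}$ via the standard interpolation $\|f\|_{C^{\alpha'}}\lec \|f\|_{L^\infty}^{1-\alpha'/\alpha}\|f\|_{C^\alpha}^{\alpha'/\alpha}$ for $\alpha'\in(0,\alpha)$, I would obtain $\|\widetilde{\theta}_\lambda\|_{C^{\alpha'}}\lec \lambda^{-\delta}$ with $\delta=1-\alpha'/\alpha>0$, and hence $\|u[\widetilde{\theta}_\lambda]\|_{C^{\alpha'}}\lec \lambda^{-\delta}$. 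Since $P\geq 3$ and $\widetilde{\theta}_\lambda$ is $P$-fold and odd-odd symmetric, $u[\widetilde{\theta}_\lambda](0)=0$, so $|u[\widetilde{\theta}_\lambda](x)|\lec \lambda^{-\delta}|x|^{\alpha'}$.

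To refine (a) I would trace a particle trajectory $\phi(x,t)$ of $u[\theta_{\rm new}]$ from $|x|\leq 2\lambda^{-1/2}$. The bound above, together with a Lemma~\ref{decay}-type integral-kernel argument applied to $\widetilde{\theta}$ (Taylor expanding the Biot--Savart kernel at $x=0$ and using $P$-fold odd-odd symmetry plus $\supp\,\widetilde{\theta}\subset B_{r/2}^c$; only $\|\widetilde{\theta}\|_{L^1}\lec 1$ is needed, no $C^P$ bound) giving $|u[\widetilde{\theta}](x)|\lec |x|^{P-1}$ for $|x|\leq r/4$, yields the differential inequality $\frac{\d}{\d t}|\phi|\lec \lambda^{-\delta}|\phi|^{\alpha'}+|\phi|^{P-1}$; integration shows $|\phi(t)|\to 0$ uniformly on $[0,T_0]$ as $\lambda\to\infty$, strictly improving (a). For (b) I would apply Lemma~\ref{L_approach}: (a) gives $\|u[\widetilde{\theta}_\lambda]\|_{C^k(B_{r/2}^c)}\lec \lambda^{-2}$ (since $\|\widetilde{\theta}_\lambda\|_{L^1}\leq \|\widetilde{\theta}_\lambda\|_\infty|\supp\,\widetilde{\theta}_\lambda|\lec \lambda^{-2}$) and (c) controls $\widetilde{\theta}$ in $H^3$ on $B_{r/2}^c$, so $\|\nabla u[\theta_{\rm new}]\|_{L^\infty(B_{r/2}^c)}$ is bounded uniformly in $\lambda$ and the support of $\widetilde{\theta}$ can only move slightly inwards from $B_r^c$. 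For (c) I would reproduce the Gr\"onwall ODE of Lemma~\ref{existence}, whose only problematic forcing involves $\|u[\widetilde{\theta}_\lambda]\|_{C^k(B_{r/2}^c)}\lec \lambda^{-2}$, giving $\|\widetilde{\theta}-\theta\|_{H^3(B_{r/2}^c)}\lec \lambda^{-2}\ll \epsilon$ for $\lambda$ large.

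The main obstacle I expect is the $H^{1+\alpha}$ bound on $\widetilde{\theta}$, since the transporting velocity $u[\theta_{\rm new}]$ is only $C^{\alpha'}$-regular and a rough velocity need not preserve $H^{1+\alpha}$. I would circumvent this by exploiting the bootstrap itself: (b) and (c) force $\widetilde{\theta}$ to vanish on $B_{r/2}$ and to be $H^3$-close to $\theta$ on $B_{r/2}^c$, so the effective regularity of $\widetilde{\theta}$ is much higher than the transporting velocity would suggest, and the interpolation producing the crucial $\lambda^{-\delta}$ decay of $\|\widetilde{\theta}_\lambda\|_{C^{\alpha'}}$ is justified a posteriori. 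This combination of $L^\infty$--$C^\alpha$ interpolation with Lemma~\ref{decay}-style angular cancellation is the essential new ingredient that replaces the ``large $P$'' trick of Lemma~\ref{existence}.
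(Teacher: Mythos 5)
Your proposal is essentially correct, but it takes a genuinely different route from the paper in the two key velocity estimates. For the rough inner piece, the paper bounds $\|u[\widetilde{\theta}_\lambda]\|_{L^\infty}$ directly via the logarithmic estimate~\eqref{vel_est}, giving
$\|u[\widetilde{\theta}_\lambda]\|_{L^\infty}\lec \|\widetilde{\theta}_\lambda\|_{L^\infty}\log\bigl(2+\|\widetilde{\theta}_\lambda\|_{H^{1+\alpha}}\bigr)\lec \lambda^{-1}\log(M+\cdots)$, while you interpolate $\|\widetilde{\theta}_\lambda\|_{C^{\alpha'}}\lec\|\widetilde{\theta}_\lambda\|_{L^\infty}^{1-\alpha'/\alpha}\|\widetilde{\theta}_\lambda\|_{C^\alpha}^{\alpha'/\alpha}$ together with the Sobolev embedding $H^{1+\alpha}(\R^2)\hookrightarrow C^{\alpha}$, obtaining the weaker (but still sufficient) $\lambda^{-\delta}$ decay. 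For the outer piece, the paper only uses the single elementary fact $u[\theta](0)=0$ and the $C^1$ bound on $u[\theta]$ (for the \emph{smooth} background $\theta$), arriving at a linear Gr\"onwall ODE $\frac{\d}{\d t}|\widetilde{\phi}|\lec |\widetilde{\phi}|+\lambda^{-1}$, whereas you invoke the full Lemma~\ref{decay}-type $O(|x|^{P-1})$ decay of $u[\widetilde{\theta}]$ for the \emph{rough} transported piece $\widetilde{\theta}$, which requires extending Lemma~\ref{decay} beyond $C_0^\infty$ (a mollification argument is needed since $\widetilde{\theta}$ is only $H^{1+\alpha}$; you are right that only $\|\widetilde{\theta}\|_{L^1}$ and the support condition are really needed, because $\mathcal{R}$ is smoothing away from $\supp\,\widetilde{\theta}$, but the paper sidesteps this entirely). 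Both routes close the bootstrap; the paper's is the shorter and sharper one. Your proposal does capture the essential new ingredient correctly: the decomposition $\theta_{\rm new}=\widetilde{\theta}+\widetilde{\theta}_\lambda$ by transport, the observation that the bootstrap hypotheses (b)--(c) yield $\|\widetilde{\theta}_\lambda\|_{H^{1+\alpha}}\lec M+O(1)$, and that combining this with the preserved $L^\infty$ smallness gives a $\lambda$-decaying velocity bound on the inner piece; these parts align with the paper.
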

We will use the above lemma for the uniqueness claim of Theorem~\ref{T01}, where we will compare two solutions with initial data satisfying \eqref{uni_ass} and with initial condition of the form of infinite union of disjoint annuli concentrating at the origin (see~\eqref{uniqueness_appl}). 
In particular, we note that the lemma allows the support of $\theta_\lambda$ to be any such union, as long as it is a subset of $B_{2\lambda^{-1/2}}$. 
In order to prove Lemma~\ref{uniqueness} we make rigorous the discussion above the lemma.
\begin{proof}
 Note that, by the assumed regularity $\theta_{\rm new}\in H^{1+\alpha}$, we can write
\[
\theta_{\rm new} = \widetilde{\theta}_\lambda + \widetilde{\theta}
\]
with $\widetilde{\theta}_\lambda, \widetilde{\theta}$ (and in particular their supports) transported by $v[\widetilde{\theta}_\lambda + \widetilde{\theta}]$ (see \cite{CPHD}). Thus let us denote by 
\[
\begin{split}
\widetilde{\phi}(x,t) & \text{ the Lagrangian trajectory of } v[\theta_{\rm new} ],\\
{\phi}(x,t) & \text{ the Lagrangian trajectory of } v[\theta ],\end{split}
\]
and let $T\leq T_0$ be the largest time such that
\eqnb\label{suppsa}
|\phi_\lambda (x,t)|\leq \lambda^{-1/4} \text{ for } |x|\leq 2\lambda^{-1/2}\qquad \text{ and } \qquad |\widetilde{\phi} (x,t)|\geq {r/2} \text{ for } x\in \supp \theta(\cdot , 0) 
\eqne
for $t\in [0,T]$. Note that $T>0$ by continuity. Moreover, the transport structure of the SQG equation implies that 
\eqnb\label{supps}
\supp\, \widetilde{\theta}_\lambda \subset \overline{B_{\lambda^{-1/4}}}, \qquad \supp\, \widetilde{\theta} \subset B_{r/2}^c,
\eqne
for $t\in [0,T]$. In particular, similarly as in \eqref{temp_003}, we obtain
\eqnb\label{supps1}
\| v[\widetilde{\theta}_\lambda] \|_{C^4 (B_{r/2}^c)} \lec_r \| \widetilde{\theta}_\lambda \|_{L^1} \lec \lambda^{-2},
\eqne
and so, similarly to \eqref{theta001}, we obtain that
\eqnb\label{supps2}
\| \theta_{\rm new} - \theta \|_{H^3 (B_{r/2}^c)} =\| \widetilde{\theta} - \theta \|_{H^3 (B_{r/2}^c)} \lec_{r,\theta} \lambda^{-2} 
\eqne
for $t\in [0,T]$. We thus need to show that $T= T_0$. If $T<T_0$ then 
\[
\begin{split}
\p_t | \phi - \widetilde{\phi }| &\leq \left| v[\theta ] (\phi ) -v[\theta ] (\widetilde{\phi } ) \right| + \left| v [ \theta_{\rm new} - \theta ] (\widetilde{\phi }) \right| \\
&\leq \| v[\theta ] \|_{C^1} |\phi - \widetilde{\phi}| + C \| \widetilde{\theta} - \theta \|_{H^3(B_{r/2}^c)}+\|v[\widetilde{\theta}_{\lambda}]\| _{L^{\infty}(B_{r/2}^c)}\\
&\lec_{r,\theta,T_0} | \phi - \widetilde{\phi }|  + \lambda^{-2}
\end{split}
\]
for  $x\in \supp \theta (\cdot , 0)$, $t\in [0,T]$.
Thus,  the ODE fact \eqref{ode_fact} gives hat
\[
| \phi - \widetilde{\phi }| \leq C_{r,\theta, T_0} \lambda^{-2},
\]
so that 
\eqnb\label{tocontr}
| \widetilde{\phi }| \geq r -  C_{r,\theta, T_0} \lambda^{-2} \geq 3r/4
\eqne for $x\in \supp \theta (\cdot , 0)$, $t\in [0,T]$, if $\lambda$ is sufficiently large. 

As for $|x| \leq 2\lambda^{-1/2}$ note that 
\eqnb\label{repl}
\|v[\widetilde{\theta}_{\lambda } ]\|_{L^{\infty}}\lec_\alpha \|\widetilde{\theta}_{\lambda }\|_{L^{\infty}}\log(2+\|\widetilde{\theta}_{\lambda }\|_{H^{1+\alpha}}),
\eqne
which we can use to estimate $v[\widetilde{\theta_\lambda}](\widetilde{\phi})$. Thus, we have 
\[
\begin{split}
|\p_t \widetilde{\phi }(x,t) | &= \left| v[\theta_{\rm new} ] (\widetilde{\phi } ) \right| \leq \left| v[\widetilde{\theta}  ] (\widetilde{\phi } ) \right| +\left| v[\widetilde{\theta_{\lambda }}] (\widetilde{\phi } ) \right|  \\
&\leq  \| v[\theta ] \|_{C^1} |\widetilde{\phi } | +\| v[\widetilde{\theta}-\theta ] \|_{L^{\infty}}+C_{\alpha}\| \widetilde{\theta}_{\lambda } \|_{L^\infty} \left( 1+ \log \|\widetilde{\theta}_{\lambda }\|_{H^{1+\alpha }} \right)  \\
&\leq C_{\theta } (|\widetilde{\phi } |+\lambda^{-2}) + \frac{C_{\alpha}}{\lambda } \left( 1+ \log \left( \| \theta_{\rm new } \|_{H^{1+\alpha }}  + \| \widetilde{\theta } - \theta \|_{H^{1+\alpha }} + \|\theta \|_{H^{1+\alpha }} \right) \right) \\
&\leq C_\theta |\widetilde{\phi } | + C_{\alpha,\theta,M }\lambda^{-1}
\end{split}
\]  
for  $|x|\leq 2\lambda^{-1/2}$, $t\in [0,T]$, and so a Gronwall type estimate gives us  
\[
|\widetilde{\phi }(x,t)| \leq |x| + C_{\theta,M,T_{0}} \lambda^{-1}  \leq \lambda^{-1/4}
\]
for such $x,t$, if $\lambda$ is sufficiently large. This and \eqref{tocontr} show that \eqref{suppsa} remains valid for some $t>T$, by continuity. This contradicts the definition of $T$, and thus $T=T_0$.
\end{proof}

 \subsection{Proof of Theorem~\ref{T01}}
 Here we use the existence and uniqueness Lemmas~\ref{existence}--\ref{uniqueness} to conclude the proof of Theorem~\ref{T01}.

Let $T_0$ be given by Theorem~\ref{T00}, and let $P\in \N$ be sufficiently large for Lemma~\ref{existence} to hold. We set 
\eqnb\label{choiceK}
K_i \coloneqq c_0 \frac{P^{\frac{1}{2}}2^{i}}{\varepsilon}, \quad \text{ for }i\geq 1.
\eqne
 For $n\geq 1$ we denote by $\theta^{n}$ the solution to the SQG equation \eqref{sqg} with initial conditions $\sum_{j=1}^{n}\theta_{K_{j},\lambda_j}(x,0)$, where the $\theta_{K_j,\lambda_j}$'s are the solutions to the SQG equation \eqref{sqg} provided by Theorem~\ref{T00}.  Note that, due to the choice \eqref{choiceK} of $K$ we have that $\| \theta_{K_j,\lambda_j}(\cdot ,0)\|_{H^s} \leq \varepsilon 2^{-j}$ for all $j\geq 1$.  We note that $\theta^1$ exists until at least $T_0$ (by Theorem~\ref{T00}) and is supported in $B_{\varepsilon/2}$, provided $\lambda_1$ is chosen large enough. Moreover, each $\theta^n$  is a smooth odd-odd symmetric, $P$-fold symmetric function, exists until at least $T_0$ by Lemma~\ref{existence} and for $r_n=\frac{1}{4(\lambda_n)^{\frac12}}$ we have
\eqnb\label{supp_of_thetan}
 \supp\, \theta^n (\cdot , t) \subset B_{r_n}^c \qquad \text{ for all }t\in [0,T_0],
 \eqne
 by Lemma~\ref{L_approach}, provided $\lambda_2, \lambda_3,\ldots $ are chosen large enough. Apart from this, given $\lambda_1,\ldots, \lambda_n$, we pick $\lambda_{n+1}$ even larger so that
 \begin{enumerate}
       \item[(i)]  $\lambda_i \geq \exp(K_i)$,
       \item[(ii)] $\lambda_{n+1}^{-\frac12} \leq 2^{-(n+2)}$ and $4\lambda_{n+1}^{-\frac12} < \lambda_n^{-\frac12}/4$ (so that, respectively,  the support of the $n+1$-st piece, $\theta_{K_{n+1},\lambda_{n+1}}$, stays inside $B_{2^{-(n+1)}}$ and that all pieces are separated, i.e. $\text{supp }\theta_{K_{n+1},\lambda_{n+1}} \cap  \theta_{K_{j},\lambda_{j}}=\emptyset$ for $j=1,\ldots ,n$) and $\lambda_{n+1}^{-1}\leq \lambda_n^{-1} /4$ (so that the $L^\infty$ norm of the pieces $\geq n+1$ is bounded by $3\lambda_{n+1}^{-1}/2$),
      \item[(iii)] $\|\theta^{n+1}-\theta^{n}-\theta_{K_{n+1},\lambda_{n+1}}\|_{H^3}\leq \varepsilon 2^{-n-1}$ for $t\in [0,T_0 ]$, 
       which is possible by Lemma~\ref{existence} (i.e. take $\lambda_{n+1}\geq 2^{n+1}\varepsilon^{-1}$).
       \item[(iv)] The uniqueness Lemma~\ref{uniqueness} holds with $M=n+1$, $\alpha=\frac{1}{n+1}$, $\epsilon\leq \frac{1}{n}$, $\theta=\theta^{n}$.
       
         \end{enumerate}

We note that  (ii) and (iii)  imply that $\theta^n$ converges, in the sup norm  to some limit $\theta^\infty \in C(\R^2\times [0,T_0])$ with compact support in space. To be precise, (iii) implies that, for $m\geq n$,    
   \begin{equation}\label{convH3cauchy}
       \left\|\theta^{m}-\theta^{n}-\sum_{j=n+1}^{m}\theta_{K_{j},\lambda_{j}}\right\|_{H^3}\leq \varepsilon \sum_{j=n+1}^{m}2^{-j}\leq \varepsilon 2^{-n},
   \end{equation}
uniformly in $t\in [0,T_0 ]$. Thus, for $m\geq n$
\eqnb\label{gluing_Cauchy}
\begin{split}
\| \theta^m - \theta^n \|_{C(\overline{B_1})} &\leq \left\| \theta^m - \theta^n - \sum_{j=n+1}^m \theta_{K_j,\lambda_j} \right\|_{C(\overline{B_1})} +\sum_{j=n+1}^m \| \theta_{K_j,\lambda_j}  \|_{C(\overline{B_1})} \\
&\leq C\varepsilon 2^{-n} + \sum_{j>n} \| \theta_{K_j,\lambda_j}  \|_{L^\infty} \lec \varepsilon 2^{-n} \to 0 
\end{split}
\eqne
as $n\to \infty$, where we used (ii) in the last inequality (recall \eqref{black_box_claims} that $\| \theta_{K_j,\lambda_j} \|_{L^\infty} \leq \lambda_j^{-1}$), and so $\{ \theta^n \}$ is Cauchy in $C(\overline{B_1}\times [0,T_0 ])$. Thus there exists  $\theta^\infty \in C(\overline{B_1}\times [0,T_0])$ such that $\supp\, \theta^\infty \subset B_\varepsilon$ and $\theta^n \to  \theta^\infty$ in $C(\overline{B_1}\times [0,T_0])$. Moreover, taking $m\to \infty$ in \eqref{convH3cauchy} and \eqref{gluing_Cauchy} gives, respectively, 
\eqnb\label{linfty}
\| \theta^n - \theta^\infty \|_{C(\overline{B_1}\times [0,T_0])} \lec \varepsilon 2^{-n}
\eqne
and  
 \eqnb\label{error_h3} 
 \left\|\theta^{\infty}-\theta^n - \sum_{j\geq n+1}\theta_{K_{j},\lambda_{j}}\right\|_{H^3}\leq \varepsilon 2^{-n}
 \eqne
 for all $n\geq 1$. We emphasize that the last inequality estimates the error between $\theta^\infty$ and $\theta^n +\sum_{j\geq n+1}\theta_{K_{j},\lambda_{j}}$, while each of these functions does not belong to $H^3$ (in fact the theorem claims that  $\theta^\infty$ does not belong even to Sobolev spaces below $H^2$). In order to verify \eqref{error_h3} we first note that \eqref{convH3cauchy} gives that
\[
\left\| g_m \right\|_{H^3} \leq \varepsilon 2^{-n}
\]
for all $m\geq 1$, where $g_m\coloneqq \theta^m -\theta^n - \sum_{j=n+1}^m \theta_{K_j,\lambda_j}$, and $n\geq 1$ is fixed. Thus, by reflexivity of $H^3$, there exists a subsequence $g_{m_k}$ that converges weakly in $H^3$ to some function $g\in H^3$ with $\| g \|_{H^3}\leq \varepsilon 2^{-n}$. However, $g_m \to \theta^\infty -\theta^n- \sum_{j\geq n+1} \theta_{K_j,\lambda_j } $ in the supremum norm (in particular the infinite sum is understood in the sense of the limit in the supremum norm). Thus $g= \theta^\infty -\theta^n- \sum_{j\geq n+1} \theta_{K_j,\lambda_j }$ and \eqref{error_h3} follows.
   
 We note that, for each $\delta>0$, functions $\theta_{K_j,\lambda_j}$, $j\geq n$ are supported inside $B_\delta$ if $n$ is taken sufficiently large. This lets us use \eqref{error_h3} to obtain
   \[
   \|\theta^{\infty}-\theta^{n}\|_{H^3(B_{\delta}^c)} \to 0 \hspace{1cm} \text{ as } n\to \infty.
   \]
   The same observation lets us estimate, for $j\geq n+1$, derivatives of $v[\theta_{K_j,\lambda_j}]$ by  $C_\delta \| \theta_{K_j,\lambda_j} \|_{L^\infty}$, so that, for each $\delta >0$,
   \[
   \begin{split}
   \| v [\theta^\infty - \theta^n ] \|_{H^3(B_\delta^c)}&\leq  \left\| v \left[\theta^\infty - \theta^n -\sum_{j\geq n+1} \theta_{K_j,\theta_j} \right] \right\|_{H^3}+ \sum_{j\geq n+1} \left\| v \left[ \theta_{K_j,\theta_j} \right] \right\|_{H^3 (B_\delta^c )} \\
   &\lec_\delta  \varepsilon 2^{-n} +  \sum_{j\geq n+1} \| \theta_{K_j,\lambda_j }\|_{L^\infty } \to 0 
   \end{split}
 \]
   as $n\to \infty$. In particular we obtain that, for each $\delta>0$,
   \[ \| \theta^{\infty}-\theta^{n}\|_{C^{2-\delta}(B_{\delta}^c )},\| v[\theta^{\infty}-\theta^{n}]\|_{C^{2-\delta}(B_{\delta}^c )} \to 0 
   \]
   as $n\to \infty$.   Thus, we can integrate the SQG equation \eqref{sqg}, for each $\theta^n$, in time and take $n\to \infty$ to obtain 
   $$\theta^{\infty}(x,t_{2})-\theta^{\infty}(x,t_{1})=-\int_{t_{1}}^{t_{2}}v[\theta^{\infty}](x,s)\cdot\nabla\theta^{\infty}(x,s)\,\d s $$
   for every $x\in \R^2 \setminus \{ 0 \}$, $t_1,t_2\in [0,T_0]$. Dividing by $t_2-t_1$ and taking $t_2\to t_1$ we obtain that $\p_t \theta^\infty$ exists on $\R^2\setminus \{ 0 \} \times [0,T_0]$ and that the SQG equation \eqref{sqg} holds for $\theta^\infty$ everywhere, except for $x=0$. 
   
   As for $x=0$, we notice that, due to the odd symmetry of $\theta^{\infty}$ we have that 
   $$\theta^{\infty}(0,t)=0,\quad v[\theta^{\infty}](0,t)=0.$$
In order to verify that spatial derivatives of $\theta^\infty$ exist at $x=0$ we note that
\[
\frac{\theta^\infty (h \, e_j)}{h}= \frac{\theta^\infty (h \, e_j)-\sum_{j\geq 1} \theta_{K_j,\lambda_j}(h \, e_j)}{h} + \sum_{j\geq 1} \frac{\theta_{K_j,\lambda_j}(h \, e_j)}{h}
\]   
   for $j=1,2$, $h>0$. The first term on the right-hand side converges as $h\to 0$ since $\theta^\infty - \sum_{j\geq 1} \theta_{K_j,\lambda_j}$ is a $C^1$ function (by \eqref{error_h3}, taken with $n=1$), and the second term converges to $0$, since each ingredient vanishes if $h\, e_j \not \in \supp\, \theta_{K_j,\lambda_j}$ and
   \[
\left|    \sum_{j\geq 1} \frac{\theta_{K_j,\lambda_j}(h \, e_j)}{h} \right|\leq \sum_{j\geq 1} \frac{\chi_{\{ h \sim \lambda_j^{-1/2} \} } \| \theta_{K_j,\lambda_j} \|_{C^0} }{h} \lec \sum_{j\geq 1}\chi_{\{ h \sim \lambda_j^{-1/2} \} }\lambda_j^{-1/2} \to 0
   \]
   as $h\to 0$,    where we used the fact that $\| \theta_{K_j,\lambda_j} \|_{L^\infty } \leq \lambda_j^{-1}$ (recall~\eqref{black_box_claims}) in the last inequality, and $\chi_E$ denotes the indicator function of a set $E$. Thus $\nabla \theta^\infty (0,t)$ exists for all $t\in [0,T]$. Consequently, and since $\theta^{\infty}(x=0,t)=0$, the SQG equation holds trivially at $x=0$, and hence $\theta^\infty$ is a classical solution in the sense of Definition~\ref{definition_classical_sol}, as needed.
   
Clearly, the initial condition for $\theta^\infty $ is
   $$\theta^\infty (x,0) = \sum_{j=1}^{\infty}\theta_{K_{j},\lambda_{j}}(x,0),$$
   and so our choice \eqref{choiceK} of the $K_i$'s implies that 
  \eqnb\label{choiceKcons}
  \|\theta^{\infty}(x,0)\|_{H^{s}}\leq \varepsilon,
  \eqne
   as required. 
   
   In order to check the norm growth, we use \eqref{error_h3} and apply Lemma~\ref{L_ss_cons} (with $\delta \coloneqq 1/2$) to obtain
   \[
   \begin{split}
   \|\theta^{\infty}\|_{H^{\beta}}&= \left\|\sum_{j\geq 1}\theta_{K_{j},\lambda_{j}}\right\|_{H^{\beta}}+ O( \varepsilon )\\
   &= \left( \sum_{j\geq 1}\left\|\theta_{K_{j},\lambda_{j}}\right\|_{H^{\beta}}^2  + O\left(  \sum_{j\geq 1} \lambda_j^{\beta -3/4} \left\|\nabla \theta_{K_{j},\lambda_{j}}\right\|_{L^2}^2 \right) \right)^{\frac{1}{2}}+ O( \varepsilon )\\
   &= \left( \sum_{j\geq 1}\left\|\theta_{K_{j},\lambda_{j}}\right\|_{H^{\beta}}^2 + O(1)  \right)^{\frac{1}{2}}+O(\varepsilon ),
   \end{split}
   \]
  
   where we used the fact that $\| \theta_{K_j,\lambda_j} \|_{H^1} \lec_{P}  \lambda_j^{-1} \log \lambda_j$ (recall \eqref{black_box_claims}) to obtain
\[
\sum_{j\geq 1} \lambda_j^{\beta -3/4} \left\| \nabla \theta_{K_{j},\lambda_{j}}\right\|_{L^2}^2 \lec_P  \sum_{j\geq 1} K_j^{-2} \lambda_j^{\beta -11/4} \log \lambda_j  \lec 1 .
\]   
We now consider $\beta <s$, and note that \eqref{black_box_Hb} and the above equality imply that
\eqnb\label{ooo}
\| \theta^{\infty } (\cdot , t ) \|_{H^\beta } < \infty \quad \Leftrightarrow \quad \sum_{j\geq 1} K_{j}^{-1-\frac{\beta t}{s-1}} \lambda_{j}^{c_{2}(\beta-s)+c_{3}\beta t} (\log \lambda_j )^{-c_4 \beta t} <\infty .
\eqne
   Thus, $ \| \theta^{\infty } (\cdot , t ) \|_{H^\beta } < \infty$ if $c_{2}(\beta-s)+c_{3}\beta t\leq 0$ (as $\sum_{j\geq 1} K_j^{-1} <\infty$). Otherwise, if $c_{2}(\beta-s)+c_{3}\beta t> 0$, then $ \| \theta^{\infty } (\cdot , t ) \|_{H^\beta } = \infty$,  since the $\lambda_j$'s dominate the $K_j$'s, due to (i). This gives the loss of regularity claim in Theorem~\ref{T01}, since
   \[
   c_{2}(\beta-s)+c_{3}\beta t> 0 \quad \Leftrightarrow \quad \beta > \frac{s}{1+\overline{c}t},
   \]
   where $\overline{c} \coloneqq c_3/c_2$.\\
   
   As for the uniqueness claim, suppose that there exists another solution $\widetilde{\theta}^{\infty}$ with initial conditions $\theta^{\infty}(x,0)$ and such that, for some $T\in (0,T_0]$,   
   $$\|\widetilde{\theta}^{\infty}\|_{H^{1+\alpha}}\leq M$$
for $t\in[0,T]$. By (iv) we can apply the uniqueness Lemma~\ref{uniqueness} for $n>\text{max}(M,\frac{1}{\alpha})$. Indeed, since the initial condition for both $\theta^\infty$ and $\widetilde{\theta}$ is $\theta^n (\cdot ,0) + \theta_{\lambda_{n+1}}$, where
\[\theta_{\lambda_{n+1}} \coloneqq  \sum_{j\geq n+ 1} \theta_{K_j,\lambda_j}(\cdot ,0)\]
satisfies $\supp \, \theta_{\lambda_{n+1}} \subset B_{2\lambda_{n+1}^{-\frac{1}{2}}}$ and $\| \theta_{\lambda_{n+1}} \|_\infty \leq 2 \lambda_{n+1}^{-1}$ (by (ii)), the claim of Lemma~\ref{uniqueness} applies to both $\theta^\infty - \theta^n$ and $\widetilde{\theta}^\infty - \theta^n$ to give
\eqnb\label{uniqueness_appl}
   \|\theta^{\infty}-\widetilde{\theta}^{\infty}\|_{H^{3}\left(  B_{r_n/2}^c \right)}\leq \|\theta^{\infty}-\theta^{n}\|_{H^{3}\left(  B_{r_n/2}^c \right)}+\|\theta^{n}-\widetilde{\theta}^{\infty}\|_{H^{3}\left(  B_{r_n/2}^c \right)}\leq \frac{2}{n}.
   \eqne
   Since $r_n\to 0$ we thus obtain that 
   \[
    \|\theta^{\infty}-\widetilde{\theta}^{\infty}\|_{H^{3}\left(  B_{\eta}^c \right)} = 0
   \]
   for every $\eta >0$, $t\in [0,T_0]$, and so 
    $\theta^\infty = \widetilde{\theta}^\infty $ on $(\R^2 \setminus \{ 0 \} ) \times [0,T_0]$, and continuity of both ${\theta}^\infty$ and $\widetilde{\theta}^\infty$ (recall Definition~\ref{definition_classical_sol}) gives $ \theta^\infty = \widetilde{\theta}^\infty $, as required.   

\section*{Acknowledgements}
This work was partially supported by the  NSF grant no.~DMS-2511556 and by the Thematic Research Programme, University of Warsaw, Excellence Initiative Research University. This work is also supported in part by the Spanish Ministry of Science
and Innovation, through the “Severo Ochoa Programme for Centres of Excellence in R$\&$D (CEX2019-000904-S \& CEX2023-001347-S)” and 152878NB-I00. We were also partially supported by the ERC Advanced Grant 788250, and by the SNF grant FLUTURA: Fluids, Turbulence, Advection No. 212573.

\bibliographystyle{plain}
\bibliography{literature}

\end{document}